\newtheorem{theorem}{Theorem}
\newtheorem{lemma}{Lemma}
\newtheorem{corollary}{Corollary}
\theoremstyle{definition}
\newtheorem{remark}{Remark}
\newtheorem{definition}{Definition}
\renewcommand{\le}{\leqslant}
\renewcommand{\ge}{\geqslant}
\renewcommand{\leq}{\leqslant}
\renewcommand{\geq}{\geqslant}
\renewcommand{\emptyset}{\varnothing}
\newcommand{\ca}{\mathcal{A}}
\newcommand{\tmod}{\ \mathsf{mod}\ }
\newcommand{\real}{\mathbb{R}}
\newcommand{\ints}{\mathbb{Z}}
\newcommand{\natu}{\mathbb{N}}
\newcommand{\bsa}{\boldsymbol{a}}
\newcommand{\bsd}{\boldsymbol{d}}
\newcommand{\bse}{\boldsymbol{e}}
\newcommand{\bsk}{\boldsymbol{k}}
\newcommand{\bsx}{\boldsymbol{x}}
\newcommand{\bsy}{\boldsymbol{y}}
\newcommand{\bst}{\boldsymbol{t}}
\newcommand{\bsz}{\boldsymbol{z}}
\newcommand{\bszero}{\boldsymbol{0}}
\newcommand{\bsone}{\boldsymbol{1}}
\newcommand{\bsell}{\boldsymbol{\ell}}
\newcommand{\bsalpha}{\boldsymbol{\alpha}}
\newcommand{\bscalE}{\boldsymbol{\mathcal{E}}}
\newcommand{\rd}{\,\mathrm{d}}
\newcommand{\dunif}{\mathbb{U}}
\newcommand{\e}{\mathbb{E}}
\newcommand{\var}{\mathrm{Var}}
\newcommand{\giv}{\!\mid\!}
\newcommand{\tran}{\mathsf{T}}
\newcommand{\walk}{\mathrm{wal}_k}
\newcommand{\walbsk}{\mathrm{wal}_{\bsk}}
\newcommand{\walkappa}[1]{{\mathrm{wal}_{#1}}}
\title{Automatic optimal-rate convergence of randomized nets using median-of-means}
\author{Zexin Pan\footnote{Johann Radon Institute for Computational and Applied Mathematics,
  \"OAW, Altenbergerstrasse~69, 4040~Linz, Austria. (\texttt{zexin.pan@oeaw.ac.at}).}}
  \date{}
\begin{document}

\maketitle

\begin{abstract}
We study the sample median of independently generated quasi-Monte Carlo estimators based on randomized digital nets and prove it approximates the target integral value at almost the optimal convergence rate for various function spaces. In contrast to previous methods, the algorithm does not require a priori knowledge of underlying function spaces or even an input of pre-designed $(t,m,s)$-digital nets, and is therefore easier to implement. This study provides further evidence that quasi-Monte Carlo estimators are heavy-tailed when applied to smooth integrands and taking the median can significantly improve the error by filtering out the outliers.
\end{abstract}

\section{Introduction}

Quasi-Monte Carlo (QMC) method \cite{dick:pill:2010} has been used extensively as an alternative to the usual Monte Carlo (MC) algorithm. Just like MC, QMC uses the average of $n$ function evaluations as an estimate for the target integral value. The places of sampling are, however, carefully designed to explore the underlying sampling space efficiently. As a result, QMC is less sensitive to the curse of dimensionality compared to classical quadrature rules while enjoying a faster error convergence rate compared to MC. 

Traditionally, the error bound is given by the Koksma-Hlawka inequality \cite{hick:2014}, which proves a $O(n^{-1+\epsilon})$ convergence rate for arbitrarily small $\epsilon>0$ when the integrand $f$ has a finite total variation in the sense of Hardy and Krause. Later \cite{rtms} introduces the method of scrambling, which randomizes the sampling points without breaking their design. Randomization boosts the convergence rate to $O(n^{-3/2+\epsilon})$ as measured by the rooted mean squared error (RMSE) when $f$ has continuous dominating mixed derivatives of order $1$.

An issue with the standard QMC design is the convergence rate cannot be further improved even if $f$ satisfies a stronger smoothness assumption \cite{superpolyone}. An exception is the lattice rules, for which point sets can be designed to obtain the optimal convergence rate for a range of smoothness conditions \cite{dick2022lattice}, but this often requires $f$ to satisfy a periodic boundary condition.

To improve the convergence rate, \cite{dick:2011} constructs the higher order scrambled digital nets, which achieve a better than $O(n^{-3/2})$ RMSE convergence rate for sufficiently smooth $f$. A drawback with this design is the smoothness parameter is needed as an input for the construction, and still the algorithm fails to converge at the optimal rate once the smoothness of $f$ exceeds the parameter we set.

This paper offers a solution using the median trick. The median of QMC estimators is first analyzed in \cite{superpolyone} and proven to converge at $O(n^{-c\log_2(n)})$ rate for any $c<3\log(2)/\pi^2\approx 0.21$ when $f$ is a one-dimensional analytic function over $[0,1]$. Later \cite{superpolymulti} generalizes this result to higher-dimensional $f$. Meanwhile, \cite{superpolyone} also proves a $O(n^{-\alpha+\epsilon})$ convergence rate when $f$ is $\alpha$-times continuously differentiable over $[0,1]$. While the rate is not optimal, it demonstrates a feature of automatic convergence rate speedup without the knowledge of the smoothness of $f$. In this paper, we will improve this result by both generalizing it to higher-dimensional $f$ and proving a $O(n^{-\alpha-1/2+\epsilon})$ almost optimal rate. Our analysis will be focused on base-2 scrambled digital nets, although the idea can be generalized to other numerical integration methods \cite{chen2025randomintegrationalgorithmhighdimensional, goda2024simpleuniversalalgorithmhighdimensional, Goda2024}. As an additional merit of the median trick,  we do not need to input a pre-designed digital net to implement the algorithm.

This paper is organized as follows. Section~\ref{sec:back} introduces necessary background knowledge and notation for later sections. Subsection~\ref{subsec:mse} in particular explains how the median trick boosts the convergence rate. Section~\ref{sec:rate} is devoted to the study of convergence rate of the median estimator constructed by randomized digital nets under various smoothness assumptions. Subsection~\ref{subsec:Valpha} studies integrands with bounded variation of order $\alpha$ used by \cite{dick:2011}. We are going to show the median estimator attains almost the same convergence rate as the higher order scrambled digital nets without knowing the smoothness parameter $\alpha$. The proof is rather short thanks to a key lemma proven in \cite{dick:2011}, and it illustrates the standard analysis scheme of the median estimator. Subsection~\ref{subsec:Vlambda} studies integrands whose dominating mixed derivatives of order $\alpha$ are continuous with bounded fractional variation used by \cite{dick:2008}. We will prove an almost $1/2$ convergence rate improvement over the optimal deterministic error rate achieved by the digital $(t,\alpha,s)$-sequence demonstrated in \cite{dick:2008}. Subsection~\ref{subsec:Hp} studies integrands whose dominating mixed derivatives belong to the fractional Sobolev space used by \cite{JOSEF:2008} and \cite{gnewuch2024qmcintegrationbasedarbitrary}. Again we will show an almost $1/2$ convergence rate improvement over the optimal deterministic rate. The proof resembles that of Subsection~\ref{subsec:Vlambda}, so we will omit the repeated part and highlight where a different analysis is required. Section~\ref{sec:exp} showcases many numerical experiments that test how well median estimators perform compared to higher order digital nets. Section~\ref{sec:disc} concludes the paper with a discussion of the limitations of our analysis as well as interesting research directions.

\section{Background and notation}\label{sec:back}

We use $\natu$ for the set of natural numbers $\{1,2,3,\dots\}$ and $\natu_0=\natu\cup\{0\}$. Because we often need to remove the zero frequency, we let $\natu^s_* = \natu_0^s\setminus\{\bszero\}$. We further define $\ints_{\le\ell}=\{0,1,\dots,\ell\}$ for $\ell\in \natu_0$ and $\ints_{<\ell}=\{0,1,\dots,\ell-1\}$ for $\ell\in \natu$. The dimension is denoted as $s$ in this paper. For $s\in \natu$, we write $1{:}s=\{1,2,\dots,s\}$. For a vector $\bsx\in\real^s$ and a subset $u\subseteq1{:}s$, we use
$\bsx_u$ to denote the $|u|$-dimensional vector of elements $x_j$ for $j\in u$ and $\bsx_{u^c}$ to denote the $(s-|u|)$-dimensional vector of elements $x_j$ for $j\not\in u$. We use $\bsone_{1{:}s}$ for the vector of $s$ ones and $\bsone\{\mathcal{A}\}$ for the indicator function that equals $1$ if event $\mathcal{A}$ occurs and equals $0$ if not. For $\bsell=(\ell_1,\dots,\ell_s)\in\natu^s_0$, we use $\Vert \bsell\Vert_1$ to denote $\sum_{j=1}^s \ell_j$.  The cardinality of a set $K$ is denoted by $|K|$. For positive sequence $a_n$ and $b_n$, we write $a_n=O(b_n)$ if there exists a constant $C<\infty$ such that $a_n\le Cb_n$ for all but finitely many $n\in \natu$.

The integrand $f$ is assumed to have domain $[0,1]^s$. The $L^p$-norm of $f$ over $[0,1]^s$ is denoted as $\Vert f\Vert_{L^p([0,1]^s)}$ and the spaces of $f$ with finite $L^p$-norm is denoted as $L^p([0,1]^s)$. We also use $C([0,1]^s)$ to denote the space of continuous functions over $[0,1]^s$ and $\Vert f\Vert_\infty=\max_{\bsx\in [0,1]^s}|f(\bsx)|$  for $f\in C([0,1]^s)$. We always assume our integrand $f$ belongs to $C([0,1]^s)$.   QMC methods aim to approximate
\begin{equation*}
    \mu=\int_{[0,1]^s}f(\bsx)\rd\bsx
\quad
\text{by}
\quad
\hat\mu=\frac1n\sum_{i=0}^{n-1}f(\bsx_i)
\end{equation*}
for points $\bsx_i\in[0,1]^s$. We will choose $\bsx_i$ according to the base-2 digital net construction described below.

\subsection{Digital nets and randomization}

For $m\in \natu$ and $i\in \ints_{<2^m}$, we represent the binary expansion $i=\sum_{\ell=1}^{m}i_\ell 2^{\ell-1}$ compactly by $\vec{i}=\vec{i}[m] = (i_1,\dots,i_m)^\tran\in\{0,1\}^m$. Similarly for $a\in[0,1)$ and a precision $E$, we truncate the binary expansion $a=\sum_{\ell=1}^\infty a_\ell 2^{-\ell}$ at the $E$'th digit and represent it by $\vec{a}=\vec{a}[E]=(a_1,\dots,a_E)^\tran\in\{0,1\}^E$. When $a$  admits two binary expansions, we always choose the one with finitely many $a_\ell=1$.

Our base-2 digital nets over $[0,1]^s$ are constructed using $s$ matrices $C_j\in\{0,1\}^{E\times m}$ for $m\in \natu$. The unrandomized digital nets are defined as
\begin{align}\label{eq:plainqmc}
\vec{x}_{ij} = C_j\vec{i} \ \tmod 2
\end{align}
for $i\in \ints_{<2^m}$ and $j\in1{:}s$. Here $\vec{x}_{ij}$ is the precision $E$ binary representation of $x_{ij}$ and $\bsx_i=(x_{i1},\dots,x_{is})$. The truncated digits beyond the $E$'th digit are assumed to be all $0$. For unrandomized digital nets we typically have $E\leq m$.

We will study two types of randomization. They both have the form
\begin{align}\label{eqn:xequalMCiplusD}
\vec{x}_{ij} = C_j\vec{i} + \vec{D}_j\ \tmod 2
\end{align}
for a random matrix $C_j\in \{0,1\}^{E\times m}$ and a random vector $\vec{D}_j\in\{0,1\}^E$ with precision $E\ge m$. The vectors $\vec{D}_j$ are called digital shifts and consist of independent $\dunif\{0,1\}$ random entries in both cases. The difference is in the randomization of $C_j$. 

In the first type of randomization, which we will call completely random designs, all entries of $C_j$ are drawn independently from $\dunif\{0,1\}$. It is very easy to implement, but may generate bad designs especially when $m$ is small. The second type of randomization comes from \cite{mato:1998:2} and is called random linear scrambling. It requires $s$ pre-designed generating matrices $\mathcal{C}_j\in \{0,1\}^{m\times m}$  as input and uses $C_j=M_j\mathcal{C}_j$ for a random lower-triangle matrix $M_j\in \{0,1\}^{E\times m}$. All entries on the diagonal of $M_j$ are $1$ and those below the diagonal are drawn independently from $\dunif\{0,1\}$. Typical choices of $\mathcal{C}_j$ are nets of Sobol' \cite{sobol67} and those of Niederreiter and Xing \cite{niedxing96} because they have low $t$ parameters that we will introduce below. Linear scrambling preserves the $t$ parameters and guarantees the resulting designs are as good as the unrandomized ones.

Letting $\bsx_i[E]$ denote the above points
constructed with precision $E$, our estimate is
\begin{equation}\label{eqn:muE}
   \hat\mu_E = \frac1n\sum_{i=0}^{n-1}f(\bsx_i)\quad\text{for $\bsx_i=\bsx_i[E]$}. 
\end{equation}
We will conveniently assume $E=\infty$ and focus our analysis on $\hat{\mu}_\infty$. In practice, $E$ is constrained by the floating point representation in use. Lemma 1 of \cite{superpolymulti} shows the difference between $\hat\mu_E$ and $\hat\mu_\infty$ is negligible when the error $|\hat{\mu}_\infty-\mu|$ is significantly larger than $\omega_f(\sqrt{s}2^{-E})$ where $\omega_f$ is the modulus of continuity of $f$. 

\subsection{Elementary intervals and the $t$ parameter}

An elementary interval in base $2$ is
a Cartesian product of the form
$$
\mathrm{EI}(\bsell,\bsa) = \prod_{j=1}^s
\Bigl[\frac{a_j}{2^{\ell_j}},
\frac{a_j+1}{2^{\ell_j}}
\Bigr)
$$
for $\bsell\in\natu_0^s$ and $\bsa\in \{(a_1,\dots,a_s)\mid a_j\in \ints_{<2^{\ell_j}}\}$. For $m\in\natu$, we say matrices $C_j,j\in 1{:}s$ generate a $(t,m,s)$-net in base $2$ if $\bsx_0,\dots,\bsx_{2^m-1}$ defined by equation~\eqref{eq:plainqmc} have the property that every elementary interval of volume $2^{t-m}$ contains precisely $2^t$ of those points. A lower value of $t$ implies better equidistribution of the point set. Sobol's construction has $t$ bounded in $m$ but growing at $O(s\log s)$ rate in $s$ \cite{sobol67}.

\subsection{Fourier-Walsh decomposition}

Walsh functions are the natural Fourier basis to study base-2 digital nets. For $k\in\natu_0$ and $x\in[0,1)$, the $k$'th univariate
Walsh function $\walk$ is defined by
$$
\walk(x) = (-1)^{\vec{k}^\tran\vec{x}}.
$$
Here $\vec{k}^\tran\vec{x}$ is computed using the infinite precision binary expansion of $k$ and $x$, but since $\vec{k}$ only contains finitely many nonzero entries, a finite precision representation suffices for the computation.

 For $\bsk \in\natu_0^s$,
$\walbsk:[0,1)^s\to\{-1,1\}$ is defined by the tensor product
$$
\walbsk(\bsx) =\prod_{j=1}^s\mathrm{wal}_{k_j}(x_j)
=(-1)^{\sum_{j=1}^s\vec{k}_j^\tran \vec{x}_j}.
$$

The functions $\walbsk$ for $\bsk\in\natu_0^s$
form a complete orthonormal basis of $L^2([0,1]^s)$~\cite{dick:pill:2010}, which implies
\begin{align}\label{eqn:Walshdecomposition}
f(\bsx) &= \sum_{\bsk\in\natu_0^s}\hat f(\bsk) \walbsk(\bsx),\quad\text{for}\\
\hat f(\bsk) &= \int_{[0,1]^s}f(\bsx)\walbsk(\bsx)\rd\bsx, \nonumber
\end{align}
where equality~\eqref{eqn:Walshdecomposition} holds in the $L^2$ sense. Using the above Walsh expansion, we can get the following error decomposition:
\begin{theorem}\label{thm:decomp}
Let $f\in C([0,1]^s)$  and let $\bsx_i$ be defined
by equation~\eqref{eqn:xequalMCiplusD} for $i\in\ints_{<2^m}$.
Then
\begin{equation}\label{eqn:errordecomposition}
    \hat{\mu}_{\infty}-\mu=\sum_{\bsk\in \natu_*^s}Z(\bsk)S(\bsk)\hat{f}(\bsk),
\end{equation}
where 
$$Z(\bsk)=\bsone\{\sum_{j=1}^s \vec{k{}}_j^\tran  C_j=\bszero \tmod 2\},$$
$$S(\bsk) = (-1)^{\sum_{j=1}^s\vec{k}_j^\tran \vec{D}_j}.$$
\end{theorem}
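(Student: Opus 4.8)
The plan is to reduce everything to one character sum and then handle separately the convergence of the Walsh series for a merely continuous integrand. Write $n=2^m$ and note $\hat f(\bszero)=\int_{[0,1]^s}f(\bsx)\rd\bsx=\mu$, so $\hat\mu_\infty-\mu=\frac1n\sum_{i=0}^{n-1}\bigl(f(\bsx_i)-\hat f(\bszero)\bigr)$. The crux is the identity, valid for every fixed $\bsk\in\natu_*^s$,
\begin{equation*}
\frac1n\sum_{i=0}^{n-1}\walbsk(\bsx_i)=Z(\bsk)\,S(\bsk).
\end{equation*}

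To prove it, I would substitute $\vec x_{ij}=C_j\vec i+\vec D_j\tmod 2$ into $\walbsk(\bsx_i)=(-1)^{\sum_{j}\vec k_j^\tran\vec x_{ij}}$. Since only the parity of the exponent matters and $\vec k_j$ has finitely many nonzero entries,
\begin{equation*}
\walbsk(\bsx_i)=(-1)^{\sum_{j}\vec k_j^\tran C_j\vec i}\cdot(-1)^{\sum_{j}\vec k_j^\tran\vec D_j}=(-1)^{\bsc(\bsk)^\tran\vec i}\,S(\bsk),
\end{equation*}
where $\bsc(\bsk)^\tran:=\sum_{j=1}^s\vec k_j^\tran C_j\tmod 2\in\{0,1\}^{1\times m}$, so that $Z(\bsk)=\bsone\{\bsc(\bsk)=\bszero\}$. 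As $i$ ranges over $\ints_{<2^m}$ the digit vector $\vec i$ ranges over all of $\{0,1\}^m$, and the sum factorizes:
\begin{equation*}
\frac1n\sum_{i=0}^{n-1}(-1)^{\bsc(\bsk)^\tran\vec i}=\prod_{\ell=1}^m\frac{1+(-1)^{c_\ell(\bsk)}}{2}=\prod_{\ell=1}^m\bsone\{c_\ell(\bsk)=0\}=Z(\bsk),
\end{equation*}
which, with the $i$-independence of $S(\bsk)$, gives the displayed identity. Multiplying by $\hat f(\bsk)$ and summing over $\bsk\in\natu_*^s$ yields \eqref{eqn:errordecomposition}, \emph{provided} one may interchange $\frac1n\sum_i$ with $\sum_{\bsk}$.

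Justifying that interchange is the main obstacle, since \eqref{eqn:Walshdecomposition} only holds in $L^2$, whereas here I need $\sum_{\bsk}\hat f(\bsk)\walbsk(\bsx_i)=f(\bsx_i)$ at the individual points $\bsx_i$ — which, after the digital shift, are generically non-dyadic — and the Walsh--Fourier series of a continuous function may diverge at a point. I would get around this via the dyadic partial sums $f_N(\bsx):=\sum_{0\le k_j<2^N}\hat f(\bsk)\walbsk(\bsx)$. The span of $\{\walk:0\le k<2^N\}$ is exactly the space of functions constant on length-$2^{-N}$ dyadic intervals, so $f_N$ is the $L^2$-projection of $f$ onto step functions over dyadic boxes of sidelength $2^{-N}$; equivalently $f_N(\bsx)=2^{Ns}\int_{B_N(\bsx)}f(\bsy)\rd\bsy$ with $B_N(\bsx)$ the dyadic box of sidelength $2^{-N}$ containing $\bsx$. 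Uniform continuity of $f$ on $[0,1]^s$ then gives $\Vert f_N-f\Vert_\infty\le\omega_f(\sqrt{s}\,2^{-N})\to0$.

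Finally, for each fixed $N$ the function $f_N$ is a finite Walsh polynomial, so the interchange is immediate: the argument above gives $\frac1n\sum_{i=0}^{n-1}f_N(\bsx_i)-\mu=\sum_{\bszero\ne\bsk,\,0\le k_j<2^N}Z(\bsk)S(\bsk)\hat f(\bsk)$, using $\hat f_N(\bszero)=\hat f(\bszero)=\mu$. Letting $N\to\infty$, the left side converges to $\hat\mu_\infty-\mu$ since it is a fixed average of $n$ values of $f_N$ and $f_N\to f$ uniformly; hence the dyadic partial sums on the right converge, and \eqref{eqn:errordecomposition} holds with its series understood as that limit. Under the smoothness assumptions used later in Sections~\ref{subsec:Valpha}--\ref{subsec:Hp} the Walsh coefficients turn out to be absolutely summable, so there the order of summation is immaterial and no such care is needed.
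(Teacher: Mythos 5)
Your proof is correct and takes the same route the paper compresses into two citations: the paper delegates the finite character-sum identity $\frac1n\sum_{i}\walbsk(\bsx_i)=Z(\bsk)S(\bsk)$ to Theorem 1 of \cite{superpolymulti} and the pointwise validity of the Walsh decomposition for $C([0,1]^s)$ integrands to Section 3.3 of \cite{dick:2008}, and you supply exactly these two ingredients from scratch, correctly interpreting the series in \eqref{eqn:errordecomposition} as the limit of its dyadic rectangular partial sums (which is the standard and intended reading). One small quibble with your closing aside: absolute summability of $\hat f(\bsk)$ under the later smoothness hypotheses is not immediate from the paper's block estimates on $\sum_{\bsk\in B_{\alpha,\bsell,s}}|\hat f(\bsk)|^2$ (the blocks in Corollary~\ref{cor:fkboundVlambda} are infinite when $\alpha\ge 1$, and when $\alpha=0$, $\lambda\le 1/2$ a Cauchy--Schwarz count over each block need not yield a summable tail), but this remark plays no role in your argument since the dyadic-limit interpretation suffices for all subsequent uses.
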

\begin{proof}
    Theorem 1 of \cite{superpolymulti} proves the statement under the stronger assumption $f$ is analytic, but the proof only needs equality~\eqref{eqn:Walshdecomposition} to hold in the pointwise sense, which is guaranteed by $f\in C([0,1]^s)$ and the arguments in Section 3.3 of \cite{dick:2008}.
\end{proof}

The set of $\bsk$ with $Z(\bsk)=1$ depends on the random matrices $C_j$ and therefore $\Pr(Z(\bsk)=1)$ depends on our randomization. This motivates the following definition:

\begin{definition}
    A randomization for $C_j\in \{0,1\}^{\infty\times m}, j\in 1{:}s$ is said to be \textbf{asymptotically full-rank} if there exists a constant $R$ independent of $m$ and $\bsk$ such that $\Pr(Z(\bsk)=1)\leq 2^{-m+R}$ for all $m\in \natu$ and $\bsk\in \natu^s_*$.
\end{definition}

\begin{lemma}\label{lem:PrZkbound}
When $E=\infty$, both completely random designs and random linear scrambling are asymptotically full-rank.
\end{lemma}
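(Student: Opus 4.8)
The plan is to handle the two randomizations separately; the completely random design is almost immediate, and the random linear scrambling is where the work lies. For a completely random design, fix $\bsk\in\natu^s_*$, choose $j_0$ with $\vec{k}_{j_0}\neq\bszero$, and choose a position $r$ with the $r$'th binary digit of $k_{j_0}$ equal to $1$. The $r$'th row of $C_{j_0}$ then consists of independent $\dunif\{0,1\}$ entries and is independent of every other row of every $C_j$, so writing $\sum_{j=1}^s\vec{k}_j^\tran C_j$ as that row plus a remainder independent of it shows this sum is uniform on $\{0,1\}^m$; hence $\Pr(Z(\bsk)=1)=2^{-m}$ and $R=0$ works.

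For random linear scrambling I substitute $C_j=M_j\mathcal{C}_j$, so that $Z(\bsk)=1$ exactly when $\sum_{j=1}^s(\vec{k}_j^\tran M_j)\mathcal{C}_j=\bszero$. I first dispose of the case where some $k_{j_0}$ has a nonzero digit in a position $r_0>m$: that digit multiplies the $r_0$'th row of $M_{j_0}$, which lies strictly below the $m\times m$ diagonal block and hence consists of independent $\dunif\{0,1\}$ entries independent of all other randomness; thus $\vec{k}_{j_0}^\tran M_{j_0}$ is uniform on $\{0,1\}^m$ and independent of $\sum_{j\neq j_0}(\vec{k}_j^\tran M_j)\mathcal{C}_j$, and conditioning on the latter while using $\rank\mathcal{C}_{j_0}\ge m-t$ (the first $m-t$ rows of $\mathcal{C}_{j_0}$ are linearly independent, by the standard characterization of the $(t,m,s)$-net property in terms of the generating matrices) gives $\Pr(Z(\bsk)=1)\le 2^{-(m-t)}$.

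In the remaining case every $k_j<2^m$, so $\vec{k}_j\in\{0,1\}^m$ and $h_j:=\vec{k}_j^\tran M_j$ is a length-$m$ row vector. Since $M_j$ is lower triangular with unit diagonal, I will check that, writing $g_j$ for the position of the most significant digit of $k_j$ (with $g_j=0$ when $k_j=0$), the vector $h_j$ vanishes above position $g_j$, equals $1$ in position $g_j$, and has independent $\dunif\{0,1\}$ entries in positions $1,\dots,g_j-1$, and that these uniform entries are jointly independent over all $j$. Then $Z(\bsk)=1$ iff $v=X$ in $\ftwo^m$, where $v=\sum_{j:g_j\ge1}(\text{the $g_j$'th row of }\mathcal{C}_j)$ is deterministic and $X$ is an $\ftwo$-linear combination, with independent uniform coefficients, of the rows $\{\text{$\ell$'th row of }\mathcal{C}_j: 1\le\ell\le g_j-1\}$; such an $X$ is uniform on the span $W$ of those rows, so $\Pr(Z(\bsk)=1)=\bsone\{v\in W\}\,2^{-\dim W}$. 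Finally I lower-bound $\dim W$ using the same characterization: if $\sum_j g_j\le m-t$ then the rows $\{\text{$\ell$'th row of }\mathcal{C}_j:1\le\ell\le g_j\}$ are linearly independent, which rules out $v\in W$ (that would be a nontrivial dependence with a $g_j$'th row appearing with coefficient $1$), so $\Pr(Z(\bsk)=1)=0$; otherwise $\sum_j g_j\ge m-t+1$, and either at least $m-t$ of the available rows exist, in which case picking $d_j\le g_j-1$ with $\sum_j d_j=m-t$ places $m-t$ linearly independent rows inside $W$, or fewer exist, in which case all $\sum_{j:g_j\ge1}(g_j-1)\ge m-t+1-s$ of them are linearly independent; either way $\dim W\ge m-t-s+1$ and $\Pr(Z(\bsk)=1)\le 2^{-m+t+s-1}$.

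Collecting the cases, $\Pr(Z(\bsk)=1)\le 2^{-m+t+s-1}$ for all $m\in\natu$ and $\bsk\in\natu^s_*$ (the bound being vacuous once $m\le t+s-1$), so random linear scrambling is asymptotically full-rank with $R=t+s-1$, where $t$ is the net parameter of the fixed generating matrices and is bounded independently of $m$ for the standard constructions of Sobol' and of Niederreiter and Xing. I expect the main obstacle to be the distributional analysis of $h_j=\vec{k}_j^\tran M_j$ — pinning down which entries are deterministic, which are uniform, and that the uniform ones are jointly independent — together with the bookkeeping that extracts from the net property a subspace of dimension at least $m-t-s+1$; the remaining ingredient is only the elementary fact that a binary linear combination with independent uniform coefficients is uniform over the span of the vectors it combines.
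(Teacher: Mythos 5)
Your proof is correct and is a genuinely different route from the paper's. For the completely random design the paper gives the same one-line argument (the sum is uniform over $\{0,1\}^m$, so $\Pr(Z(\bsk)=1)=2^{-m}$); for the random linear scrambling the paper simply cites Corollary~1 of \cite{superpolymulti} to get $\Pr(Z(\bsk)=1)\le 2^{-m+t+s}$ and sets $R=t+s$, whereas you reconstruct the argument from scratch. Your analysis — splitting on whether some $k_{j_0}$ has a digit above position $m$, and otherwise pinning down the exact distribution of $h_j=\vec{k}_j^\tran M_j$ (zero above the leading bit position $g_j$, forced $1$ at $g_j$, independent uniforms below) and then bounding $\dim W$ via the linear-independence characterization of the $(t,m,s)$-net property — is sound. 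One small subtlety you handled correctly: when $\sum_j g_j\le m-t$ the net property makes the full set of rows up to the $g_j$'th linearly independent, which forces $v\notin W$ and hence probability exactly $0$; the $\dim W\ge m-t-s+1$ bound only kicks in when $\sum_j g_j\ge m-t+1$. A side benefit of your direct proof is the marginally sharper constant $R=t+s-1$ rather than the paper's $R=t+s$; the difference is immaterial for the lemma, but it shows the dependence on the number of "wasted" leading-bit positions (one per active coordinate) can be accounted for exactly. The trade-off versus the paper's approach is self-containedness and a sharper constant against having to re-derive what is essentially the content of the cited corollary.
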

\begin{proof}
    For completely random designs, $\sum_{j=1}^s \vec{k{}}_j^\tran  C_j \tmod 2$ is uniformly distributed over $\{0,1\}^m$ when $\bsk\neq \bszero$ and $\Pr(Z(\bsk)=1)=2^{-m}$. For random linear scrambling, 
    Corollary 1 of \cite{superpolymulti} proves $\Pr(Z(\bsk)=1)\leq 2^{-m+t+s}$, so it satisfies the definition with $R=t+s$.
\end{proof}

\subsection{Mean squared error of the median}\label{subsec:mse}

Analysis of equation~\eqref{eqn:errordecomposition} is facilitated by the fact that $S(\bsk),\bsk\in \natu_*^s$ are mean 0 and pairwise independent (see Lemma 4 of \cite{superpolyone} for a proof when $s=1$). In particular, we can compute the variance of $\hat{\mu}_{\infty}$ and apply Chebyshev's inequality to bound the probability of large $|\hat{\mu}_{\infty}-\mu|$ occurrence. We can further sharpen the bound by identifying a small probability event $\ca$ such that 
$\e(\hat{\mu}_\infty\giv\ca^c)=\mu$ and 
$\var(\hat{\mu}_\infty\giv\ca^c)\ll\var(\hat{\mu}_\infty)$. We then turn this bound into a reduced RMSE by taking the median and filtering out the outliers. This idea leads to the following lemma:

\begin{lemma}\label{lem:medianMSE}
Let $\ca$ be an event with
$\Pr(\ca)\leq \delta$ and 
$\e(\hat{\mu}_\infty\giv \ca^c)=\mu$. Then the sample median 
$\hat{\mu}^{(r)}_{\infty}$ of  $2r-1$ independently generated values of 
$\hat{\mu}_\infty$ defined by equation~\eqref{eqn:muE} satisfies
\begin{align*}
    &\e((\hat{\mu}^{(r)}_{\infty}-\mu)^2)\leq \delta^{-1}{\Pr(\ca^c)}
\var(\hat{\mu}_\infty\giv \ca^c)+(8\delta)^r \Delta^2_m
\end{align*}
where $\Delta_m$ is the worst-case error of $\hat{\mu}_\infty$ when $n=2^m$ and $\Delta_m\leq 2\Vert f\Vert_\infty$ if $f\in C([0,1]^s)$. For random linear scrambling of base-2 $(t,m,s)$-nets, we further have $\Delta_m=O(m^{s-1}2^{-m})$ if $f$ has a finite total variation in the sense of Hardy and Krause.
\end{lemma}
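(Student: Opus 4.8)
The plan is to control $\e((\hat\mu^{(r)}_\infty-\mu)^2)$ through the tail of the median together with the layer-cake identity $\e((\hat\mu^{(r)}_\infty-\mu)^2)=\int_0^\infty 2u\,\Pr(|\hat\mu^{(r)}_\infty-\mu|>u)\rd u$. The first step is the standard order-statistic observation that the median of $2r-1$ i.i.d.\ copies of $\hat\mu_\infty$ can differ from $\mu$ by more than $u$ only if at least $r$ of those copies do. Writing $p(u)=\Pr(|\hat\mu_\infty-\mu|>u)$ and taking a union bound over the $\binom{2r-1}{r}$ subsets of size $r$, independence of the copies gives $\Pr(|\hat\mu^{(r)}_\infty-\mu|>u)\le\binom{2r-1}{r}p(u)^r\le\tfrac12(4p(u))^r$.

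The second step bounds $p(u)$ using the event $\ca$. Decomposing over $\ca$ and $\ca^c$ and applying Chebyshev's inequality conditionally on $\ca^c$ — where $\hat\mu_\infty$ has mean exactly $\mu$ by hypothesis — gives $p(u)\le\Pr(\ca)+\Pr(\ca^c)\var(\hat\mu_\infty\mid\ca^c)/u^2\le\delta+\Pr(\ca^c)\var(\hat\mu_\infty\mid\ca^c)/u^2$. I then introduce the threshold $u_0=\bigl(\delta^{-1}\Pr(\ca^c)\var(\hat\mu_\infty\mid\ca^c)\bigr)^{1/2}$: for $u\ge u_0$ this yields $p(u)\le 2\delta$, hence $\Pr(|\hat\mu^{(r)}_\infty-\mu|>u)\le\tfrac12(8\delta)^r$; for $u\le u_0$ I use only the trivial bound $\Pr(\cdots)\le 1$; and for $u>\Delta_m$ the probability vanishes because every realization of $\hat\mu_\infty$, and therefore of the median, lies within $\Delta_m$ of $\mu$.

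The third step assembles these into the tail integral. The part over $[0,u_0]$ contributes at most $\int_0^{u_0}2u\rd u=u_0^2=\delta^{-1}\Pr(\ca^c)\var(\hat\mu_\infty\mid\ca^c)$, which is the first term of the claim. The part over $[u_0,\Delta_m]$ contributes at most $\int_0^{\Delta_m}2u\cdot\tfrac12(8\delta)^r\rd u=\tfrac12(8\delta)^r\Delta_m^2\le(8\delta)^r\Delta_m^2$, the second term; and when $u_0>\Delta_m$ this interval is empty, so the whole integral is at most $u_0^2$ and the claimed bound holds a fortiori. Summing the two pieces completes the inequality.

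It remains to bound $\Delta_m$. The estimate $\Delta_m\le 2\Vert f\Vert_\infty$ is immediate since $\hat\mu_\infty$ and $\mu$ are both averages of values of $f$. For random linear scrambling of a $(t,m,s)$-net, every realization of $\bsx_0,\dots,\bsx_{2^m-1}$ is again a $(t,m,s)$-net: left-multiplying the generating matrices by the unit lower-triangular $M_j$ preserves the $t$ parameter (as already noted in the text), and applying the digital shifts $\vec D_j$ preserves it as well, since XOR-ing by a fixed vector merely permutes the elementary intervals of each given resolution. The Koksma-Hlawka inequality combined with the classical $O(m^{s-1}2^{-m})$ star-discrepancy bound for $(t,m,s)$-nets then gives $|\hat\mu_\infty-\mu|=O(m^{s-1}2^{-m})$ uniformly over the randomness, hence $\Delta_m=O(m^{s-1}2^{-m})$. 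The only point needing genuine care is the choice of the threshold $u_0$ in the tail integral, which is precisely what makes the two terms emerge with the stated constants; the remaining steps are routine.
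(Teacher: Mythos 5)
Your proof is correct, and it is a complete, self-contained reconstruction of the argument that the paper merely defers to (Lemma~5 of \cite{superpolymulti}); the paper's own ``proof'' is a one-line citation plus a comment on $\Delta_m$. Your route is the standard median-of-means scheme — layer-cake identity, the order-statistic union bound $\Pr(|\hat\mu^{(r)}_\infty-\mu|>u)\le\binom{2r-1}{r}p(u)^r\le\tfrac12(4p(u))^r$, Chebyshev conditioned on $\ca^c$ to get $p(u)\le\delta+\Pr(\ca^c)\var(\hat\mu_\infty\mid\ca^c)/u^2$, and the threshold $u_0$ chosen so that $p(u)\le 2\delta$ for $u\ge u_0$ — and the bookkeeping with $u_0$ and $\Delta_m$ is handled correctly, including the edge case $u_0>\Delta_m$ and the slack factor $\tfrac12(8\delta)^r\le(8\delta)^r$. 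The two statements about $\Delta_m$ are also argued correctly: the uniform bound $2\Vert f\Vert_\infty$ is trivial, and the $O(m^{s-1}2^{-m})$ bound follows from $t$-parameter preservation under linear scrambling and digital shift together with Koksma--Hlawka and the classical star-discrepancy bound for $(t,m,s)$-nets. In short: correct, same approach in spirit, but you actually supply the details.
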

\begin{proof}
    Lemma 5 of \cite{superpolymulti} proves the statement under the random linear scrambling, but the proof applies to any randomization on $C_j$ because the linear scrambling is assumed only to prove a tighter bound on $\Delta_m$. The bound $\Delta_m\leq 2\Vert f\Vert_\infty$ holds because both $\mu$ and $\hat{\mu}_\infty$ cannot exceed the maximum of $|f|$.
\end{proof}

As seen in Lemma~\ref{lem:PrZkbound} and Lemma~\ref{lem:medianMSE}, the two types of randomization differ only in the bounds of $\Pr(Z(\bsk)=1)$ and $\Delta_m$, so we will not emphasize which randomization is used when we state our convergence rate results in Section~\ref{sec:rate} with the understanding that the proof applies to any asymptotically full-rank randomization.

We will set $\mathcal{A}$ to be the event $\{Z(\bsk)=1$ for some $\bsk\in K\}$ for a chosen $K\subseteq \natu^s_*$. Because $Z(\bsk)$ is determined by $C_j$,  $j\in 1{:}s$ and $S(\bsk)$ is independent of those $C_j$, we know $S(\bsk)$ is independent of $\mathcal{A}$ and still has mean 0 conditioned on $\mathcal{A}^c$, and therefore $\e(\hat{\mu}_\infty\giv\ca^c)=\mu$. We also have $\Pr(Z(\bsk)=1\mid \ca^c)=0$ for $\bsk\in K$ and
\begin{align*}
    {\Pr(\ca^c)}
\var(\hat{\mu}_\infty\giv \ca^c)
&={\Pr(\ca^c)}\sum_{\bsk\in\natu^s_*} \Pr(Z(\bsk)=1\mid \ca^c)|\hat{f}(\bsk)|^2\\
&=\sum_{\bsk\in\natu^s_*\setminus K} \Pr(\{Z(\bsk)=1\}\cap \ca^c)|\hat{f}(\bsk)|^2\\
&\leq \sum_{\bsk\in\natu^s_*\setminus K} \Pr(Z(\bsk)=1) |\hat{f}(\bsk)|^2.
\end{align*}
A key ingredient in our later proof is to choose a proper set $K$ and make the above sum converge at a desired rate.

\section{Convergence rate of the median}\label{sec:rate}
Below we will study the convergence rate of the sample median 
$\hat{\mu}^{(r)}_{\infty}$ under different smoothness assumptions on $f$. In each subsection, we will first state the assumption and give related bounds of the Walsh coefficient $\hat{f}(\bsk)$. Then we will construct an event $\ca$ and use Lemma~\ref{lem:medianMSE} to prove the RMSE of $\hat{\mu}^{(r)}_{\infty}$ converges at almost the optimal rate. 

\subsection{When $f$ has bounded variation of order $\alpha$}

\label{subsec:Valpha}
In \cite{dick:2011}, the author constructs higher order scrambled digital nets and shows their RMSE achieves the optimal $O(n^{-\alpha-1/2+\epsilon})$ convergence rate for any $\epsilon>0$ when $f$ has bounded variation of order $\alpha$. We will show $\hat{\mu}^{(r)}_{\infty}$ automatically achieves the same convergence rate without knowing the smoothness parameter $\alpha$ as long as $r$ is large enough. 

First, we will define the variation of order $\alpha$ for $\alpha\in \natu$. We will mostly follow the notations from \cite{dick:2011}. For $f$ over $[0,1]$ and $\bsz=(z_1,\dots, z_\alpha)\in (-1,1)^\alpha$, we define the one-dimensional finite difference operator $\Delta_\alpha, \alpha\in \natu_0$ recursively as
$$\Delta_0(x)f=f(x),$$
$$\Delta_\alpha(x;z_1,\dots,z_\alpha)f=\Delta_{\alpha-1}(x+z_\alpha;z_1,\dots,z_{\alpha-1})f-\Delta_{\alpha-1}(x;z_1,\dots,z_{\alpha-1})f.$$
We assume $x+\sum_{r=1}^{\alpha'} z_r\in [0,1]$ for all $1\leq \alpha'\leq \alpha$.

For $f$ over $[0,1]^s$, we let $\Delta_{i,\alpha}$ be the finite difference operator applied to the $i$th coordinate of $f$. For $\bsalpha=(\alpha_1,\dots,\alpha_s)$ and $\bsz_j=(z_{j,1},\dots,z_{j,\alpha_j})$, we define
$$\Delta_{\bsalpha}(\bsx;\bsz_1,\dots,\bsz_s)f=\Delta_{s,\alpha_s}(x_s;\bsz_s)\cdots\Delta_{1,\alpha_1}(x_1;\bsz_1)f.$$

The generalized Vitali variation of $f$ with $\alpha\in\natu$ and $\bsalpha\in \{1,\dots,\alpha\}^s$ is defined as
\begin{equation}\label{eqn:Valphadef}
    V^{(s)}_{\bsalpha}(f)=\sup_{\mathcal{P_\alpha}}\Bigl(\sum_{J\in \mathcal{P_\alpha}}\mathrm{Vol}(J)\sup_{\bst,\bsz_1,\dots,\bsz_s}\Bigl|\frac{\Delta_{\bsalpha}(\bst;\bsz_1,\dots,\bsz_s)f}{\prod_{j=1}^s\prod_{r=1}^{\alpha_j} z_{j,r}}\Bigr|^2\Bigr)^{1/2}
\end{equation}
where $\mathrm{Vol}(J)$ is the Lebesgue measure of $J$, $\sup_{\mathcal{P_\alpha}}$ is over all partitions of $[0,1)^s$ into subcubes of the form $J=\prod_{j=1}^s [2^{-\alpha \ell_j}a_j,2^{-\alpha \ell_j}(a_j+1))$ for $a_j\in \ints_{<2^{\alpha\ell_j}}$ and $\ell_j\in \natu$, and $\sup_{\bst,\bsz_1,\dots,\bsz_s}$ is over $\bst\in J$ and $z_{j,r}=2^{-\alpha(\ell_j-1)-r}$ or $-2^{-\alpha(\ell_j-1)-r}$ such that $2^{-\alpha (\ell_j-1)}\lfloor 2^{-\alpha}a_j\rfloor\leq t_j+\sum_{r=1}^{\alpha'_j}z_{j,r}<2^{-\alpha (\ell_j-1)}(\lfloor 2^{-\alpha}a_j\rfloor+1)$ for all $1\leq \alpha'_j\leq \alpha_j$. We note that the definition used by \cite{dick:2011} allows partitions of more general shape, but it is always possible to find refinements from our set of partitions, so the two definitions agree.

The generalized Hardy and Krause variation of $f$ with $\alpha\in \natu$ is defined as
\begin{equation*}
    V_{\alpha}(f)=\Bigl(\sum_{u\subseteq 1{:}s}\sum_{\bsalpha\in \{1,\dots,\alpha\}^{|u|}}(V^{(|u|)}_{\bsalpha}(f_u))^2\Bigr)^{1/2}
\end{equation*}
where 
$f_u(\bsx_u)=\int_{[0,1]^{s-|u|}}f(\bsx)\rd \bsx_{1{:}s\setminus u}$ if $u\neq \emptyset$ and $V^{(\emptyset)}_{\bsalpha}(f_\emptyset)=|\mu|$.

Next for $d\in \natu$, we define the digit interlacing function 
$\mathcal{E}_d:\natu^d_0\to \natu_0$ as
    $$\mathcal{E}_d(k_1,\dots,k_d)=\sum_{\ell=1}^\infty \sum_{r=1}^d \vec{k}_r(\ell)2^{d (\ell-1)+r-1}$$
    where $\vec{k}_r(\ell)$ is the $\ell$'th entry of $\vec{k}_r$. The  vectorized version $\bscalE_d:\natu^{d s}_0\to \natu^s_0$ is given by
     $$\bscalE_d(k_1,\dots,k_{d s})=(\mathcal{E}_d(k_1,\dots,k_d),\dots,\mathcal{E}_d(k_{d(s-1)+1},\dots,k_{d s})).$$ 
     It is easy to see $\bscalE_d$ is a bijection between $\natu^{d s}_0$ and $\natu^s_0$.

With the above definition, \cite{dick:2011} proves the following bounds on the Walsh coefficients $\hat{f}(\bsk)$. We will prove a similar result in the next subsection under a different assumption.

\begin{lemma}\label{lem:Josefbound}
    Let $\alpha\in \natu$ and suppose $f$ over $[0,1]^s$ satisfies $V_{\alpha}(f)<\infty$. 
    For $\bsell=(\ell_1,\dots,\ell_{\alpha s})\in \natu_0^{\alpha s}$, define
    \begin{equation}\label{eqn:Bals}
        B_{\alpha,\bsell,s}=\{\bsk\in \natu_0^{\alpha s}\mid \lfloor 2^{\ell_j-1}\rfloor\leq k_j<2^{\ell_j}\text{ for }j\in 1{:}\alpha s\}.
    \end{equation}
    Then we have
    $$\Bigl(\sum_{\bsk\in B_{\alpha,\bsell,s}}|\hat{f}(\bscalE_\alpha(\bsk))|^2\Bigr)^{1/2}\leq C_{\alpha,s} 2^{-\alpha\Vert\bsell\Vert_1} V_{\alpha}(f)$$
    where $C_{\alpha,s}$ is an explicit constant depending on $\alpha$ and $s$ and $\Vert\bsell\Vert_1$ is the sum of the $\alpha s$ components of $\bsell$.
\end{lemma}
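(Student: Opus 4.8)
The plan is to read the estimate off from the Walsh-coefficient decay bound of \cite{dick:2011}: the statement is that bound transcribed into the present notation, and the only genuine adjustment is supplied by the remark preceding the lemma, which lets one replace Dick's admissible partitions by the dyadic partitions in \eqref{eqn:Valphadef}. For completeness I sketch the underlying argument.

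First I would reduce to ``active'' coordinates. Given $\bsell=(\ell_1,\dots,\ell_{\alpha s})$, let $u\subseteq 1{:}s$ be the set of coordinates $j$ for which some $\ell_r$ with $r$ in the $j$-th block $\{\alpha(j-1)+1,\dots,\alpha j\}$ is nonzero, and for $j\in u$ let $\alpha_j$ be the number of such nonzero $\ell_r$. For $\bsk\in B_{\alpha,\bsell,s}$ the definition \eqref{eqn:Bals} forces $k_r=0$ whenever $\ell_r=0$, so the $j$-th component of $\bscalE_\alpha(\bsk)$ vanishes for $j\notin u$; since $\walbsk$ is a product and $\mathrm{wal}_0\equiv 1$, one has $\hat f(\bscalE_\alpha(\bsk))=\widehat{f_u}\bigl((\bscalE_\alpha(\bsk))_u\bigr)$, so the sum over $B_{\alpha,\bsell,s}$ collapses to the analogous sum for $f_u$ on $[0,1]^{|u|}$ with every coordinate active. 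It therefore suffices to prove, for $\bsalpha=(\alpha_j)_{j\in u}$,
\[
\sum_{\bsk\in B_{\alpha,\bsell,s}}\bigl|\widehat{f_u}(\bsk)\bigr|^2\ \le\ C_{\alpha,s}^2\,2^{-2\alpha\Vert\bsell\Vert_1}\bigl(V^{(|u|)}_{\bsalpha}(f_u)\bigr)^2,
\]
which is enough because this particular $\bsalpha$ appears among the terms of $V_\alpha(f)^2=\sum_{u}\sum_{\bsalpha}(V^{(|u|)}_{\bsalpha}(f_u))^2$.

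For that bound the key point is the structure of $\widehat{f_u}(\bsk)$ at an interlaced index. I would expand $\widehat{f_u}(\bsk)=\int_{[0,1]^{|u|}}f_u(\bsx)\walbsk(\bsx)\rd\bsx$ and use that in $\mathcal{E}_\alpha(k_{\alpha(j-1)+1},\dots,k_{\alpha j})$ the leading digit of $k_{\alpha(j-1)+r}$ occupies a binary position of weight $2^{-(\alpha(\ell_{\alpha(j-1)+r}-1)+r)}$. Removing these $\alpha_j$ leading digits in each coordinate $j$ by $\alpha_j$ rounds of integration by parts (equivalently, by splitting $[0,1)$ dyadically at the corresponding scales and summing the resulting signed integrals) rewrites $\widehat{f_u}(\bsk)$ as an integral of the mixed difference $\Delta_{\bsalpha}f_u$ evaluated at those dyadic scales. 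The sub-leading digits of the $k_r$ stay free, and as $\bsk$ ranges over $B_{\alpha,\bsell,s}$ the signs they contribute run through a discrete Walsh--Hadamard transform, so Parseval for that transform collapses $\sum_{\bsk}|\widehat{f_u}(\bsk)|^2$ to $2^{-2\alpha\Vert\bsell\Vert_1}$ (times an $\alpha,s$-dependent constant) multiplied by a sum over the dyadic cells $J$ of $\mathrm{Vol}(J)\sup|\Delta_{\bsalpha}f_u/\prod_{j,r}z_{j,r}|^2$, which by \eqref{eqn:Valphadef} is at most $(V^{(|u|)}_{\bsalpha}(f_u))^2$.

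The hard part will be the digit bookkeeping. One must check that the peeled positions are distinct (the $r$-th interleaved digit of a block sits at a binary position $\equiv r-1\pmod{\alpha}$, which makes this automatic), keep precise track of the weights $2^{-(\alpha(\ell-1)+r)}$ so that the peeled first differences line up with the step sizes $z_{j,r}=\pm 2^{-\alpha(\ell_j-1)-r}$ of \eqref{eqn:Valphadef}, and handle the case in which the leading-digit levels inside a single coordinate block are unequal, where the naive peeling yields a mixed difference at non-consecutive scales. There I would rewrite each coarser first difference as a telescoping sum of finer ones and use $\sup_x|\Delta_1(x;2^{-a})f|/2^{-a}\le\sup_x|\Delta_1(x;2^{-b})f|/2^{-b}$ for $a\le b$ to pass to consecutive scales, matching \eqref{eqn:Valphadef} at the cost only of further $\alpha$-dependent constants. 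Beyond this and the partition-refinement remark, the argument is exactly that of \cite{dick:2011}.
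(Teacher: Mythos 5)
Your proposal is correct and takes essentially the same approach as the paper: both reduce the statement to Lemma~9 of \cite{dick:2011} (with $d=\alpha$, $b=2$) and invoke the partition-refinement remark preceding the lemma to reconcile Dick's admissible partitions with the dyadic ones in equation~\eqref{eqn:Valphadef}. The paper stops at the citation and only supplies the explicit value $C_{\alpha,s}=\sup_{\bsell}2^{\alpha\Vert\bsell\Vert_1}\gamma(\bsell)$; your sketch of the digit-peeling and Walsh--Hadamard Parseval argument is a reasonable summary of Dick's proof but is not required once the citation is made.
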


\begin{proof}
    This is Lemma 9 of \cite{dick:2011} with $d=\alpha$ and $b=2$. The constant $C_{\alpha,s}$ is given by
    $$C_{\alpha,s}=\sup_{\bsell\in \natu_0^{\alpha s}}2^{\alpha\Vert\bsell\Vert_1}\gamma(\bsell)=\prod_{i=1}^s \prod_{j=1}^{\alpha} 2^{-j+(i-1)\alpha+\alpha}=2^{s(s+1)\alpha^2/2-s\alpha(\alpha+1)/2}, $$
    where $\gamma(\bsell)$ is defined as in that lemma.
\end{proof}

Motivated by the above lemma, we define event $\mathcal{A}$ to be $\{Z(\bsk)=1$ for some $\bsk\in K\}$ with
$$K=\{\bsk\in \natu_*^s\mid \bsk=\bscalE_\alpha(\bsk'), \bsk' \in B_{\alpha,\bsell,s}, \Vert\bsell\Vert_1\leq m-\alpha s \log_2(m)\}.$$
The next lemma will help us bound the size of $|K|$.

\begin{lemma}\label{lem:geometricsum}
 For $\rho>1$ and any nondecreasing function $g:\natu_0\to \mathbb{R}_+$
$$\sum_{N=0}^{N_m} g(N) \rho^N\leq \frac{\rho}{\rho-1}g(N_m) \rho^{N_m}.$$
\end{lemma}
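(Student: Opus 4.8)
The plan is to prove this by a direct geometric-series estimate, bounding each term $g(N)\rho^N$ in the sum by the largest term $g(N_m)\rho^{N_m}$ after a change of summation index. First I would reverse the order of summation by writing $j = N_m - N$, so that the sum becomes $\sum_{j=0}^{N_m} g(N_m - j)\rho^{N_m - j}$. Since $g$ is nondecreasing, we have $g(N_m - j) \le g(N_m)$ for every $j \ge 0$, and since $\rho > 1$ we have $\rho^{N_m - j} = \rho^{N_m}\rho^{-j}$. Hence each summand is at most $g(N_m)\rho^{N_m}\rho^{-j}$.

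The next step is to sum the resulting bound: $\sum_{j=0}^{N_m} g(N_m)\rho^{N_m}\rho^{-j} = g(N_m)\rho^{N_m}\sum_{j=0}^{N_m}\rho^{-j}$. The finite geometric sum $\sum_{j=0}^{N_m}\rho^{-j}$ is bounded by the infinite sum $\sum_{j=0}^{\infty}\rho^{-j} = \frac{1}{1-\rho^{-1}} = \frac{\rho}{\rho-1}$, using $\rho > 1$ so that $\rho^{-1} < 1$. Combining, we get $\sum_{N=0}^{N_m}g(N)\rho^N \le \frac{\rho}{\rho-1}g(N_m)\rho^{N_m}$, which is exactly the claimed inequality.

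There is essentially no obstacle here: the lemma is an elementary bound and every step is routine. The only mild care needed is to make sure the monotonicity of $g$ is used in the correct direction (it is applied to the reindexed variable $N_m - j$, which ranges over $\{0,1,\dots,N_m\}$, so $g(N_m-j)\le g(N_m)$ holds) and that $g$ maps into $\mathbb{R}_+$ so that all terms are nonnegative and no sign issues arise when discarding the tail of the geometric series. One could alternatively give an even shorter argument by induction on $N_m$, but the reindexing argument above is the cleanest and makes the role of each hypothesis transparent.
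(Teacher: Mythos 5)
Your proof is correct, but it takes a different route from the paper's. You bound each term by the largest one: since $g$ is nondecreasing, $g(N)\le g(N_m)$ for all $N\le N_m$, so $\sum_{N=0}^{N_m}g(N)\rho^N\le g(N_m)\sum_{N=0}^{N_m}\rho^N\le g(N_m)\rho^{N_m}\sum_{j=0}^\infty\rho^{-j}=\frac{\rho}{\rho-1}g(N_m)\rho^{N_m}$. The paper instead uses Abel summation (summation by parts): it writes $\rho^N=\frac{\rho^{N+1}-\rho^N}{\rho-1}$, telescopes to isolate the boundary term $g(N_m)\frac{\rho^{N_m+1}}{\rho-1}$, and then discards the remaining terms $-\sum_{N=1}^{N_m}(g(N)-g(N-1))\frac{\rho^N}{\rho-1}-g(0)\frac{1}{\rho-1}$, which are nonpositive precisely because $g$ is nondecreasing and nonnegative. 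Both arguments are elementary and use the hypotheses on $g$ and $\rho$ in essentially the same places; your direct comparison is perhaps the more transparent of the two, while the paper's Abel summation is a slightly slicker identity-based manipulation. The constants and the final bound are identical either way.
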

\begin{proof}
\begin{align*}
\sum_{N=0}^{N_m}g(N) \rho^{N}
         &= \sum_{N=0}^{N_m}g(N) \frac{\rho^{N+1}-\rho^{N}}{\rho-1}\\
        &=g(N_m) \frac{\rho^{N_m+1}}{\rho-1}-\sum_{N=1}^{N_m} (g(N)-g(N-1)) \frac{\rho^{N}}{\rho-1}-g(0)\frac{1}{\rho-1}\\
        &\leq \frac{\rho}{\rho-1}g(N_m) \rho^{N_m}.\qedhere
\end{align*}
\end{proof}

\begin{lemma}\label{lem:smallpVa}
    With $\mathcal{A}$ defined as above, we have $\Pr(\mathcal{A})=O(1/m)$.
\end{lemma}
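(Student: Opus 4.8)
The plan is to bound $\Pr(\mathcal{A})$ by a union bound over $\bsk \in K$, using that each randomization is asymptotically full-rank (Lemma~\ref{lem:PrZkbound}), and then to show that $|K| = O(2^m/m)$ so that the $2^{-m+R}$ factor kills it down to $O(1/m)$. Concretely, $\Pr(\mathcal{A}) \le \sum_{\bsk \in K}\Pr(Z(\bsk)=1) \le |K|\,2^{-m+R}$, so it suffices to prove $|K| \le c\, 2^m/m$ for a constant $c$ independent of $m$.

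To estimate $|K|$, I would first use that $\bscalE_\alpha$ is a bijection, so $|K|$ equals the number of $\bsk' \in \natu_0^{\alpha s}$ lying in some box $B_{\alpha,\bsell,s}$ with $\Vert\bsell\Vert_1 \le m - \alpha s\log_2 m$ (the constraint $\bsk \ne \bszero$ only removes points, so it is harmless for an upper bound). For a fixed $\bsell$, the box $B_{\alpha,\bsell,s}$ has cardinality $\prod_{j}(2^{\ell_j} - \lfloor 2^{\ell_j-1}\rfloor) \le 2^{\Vert\bsell\Vert_1}$, so
\[
|K| \le \sum_{\substack{\bsell \in \natu_0^{\alpha s}\\ \Vert\bsell\Vert_1 \le N_m}} 2^{\Vert\bsell\Vert_1}, \qquad N_m := \lfloor m - \alpha s\log_2 m\rfloor .
\]
Grouping by the value $N = \Vert\bsell\Vert_1$ and writing $g(N)$ for the number of $\bsell \in \natu_0^{\alpha s}$ with $\Vert\bsell\Vert_1 = N$, which is the nondecreasing polynomial-in-$N$ quantity $\binom{N + \alpha s - 1}{\alpha s - 1} = O(N^{\alpha s - 1})$, I would apply Lemma~\ref{lem:geometricsum} with $\rho = 2$ to get
\[
|K| \le \sum_{N=0}^{N_m} g(N)\, 2^N \le 2\, g(N_m)\, 2^{N_m} = O\!\bigl(N_m^{\alpha s-1}\, 2^{N_m}\bigr).
\]
Since $2^{N_m} \le 2^{m - \alpha s\log_2 m} = 2^m m^{-\alpha s}$ and $N_m^{\alpha s - 1} \le m^{\alpha s - 1}$, this gives $|K| = O(2^m m^{-1})$, hence $\Pr(\mathcal{A}) = O(2^{-m+R})\cdot O(2^m/m) = O(1/m)$.

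The only mildly delicate point is bookkeeping with the floor in $N_m$ and with $\lfloor 2^{\ell_j - 1}\rfloor$ (which matters only when $\ell_j = 0$, where $B$ forces $k_j = 0$, so the bound $2^{\ell_j}-\lfloor 2^{\ell_j-1}\rfloor \le 2^{\ell_j}$ still holds); none of this affects the asymptotics. The main obstacle, such as it is, is simply making sure the function $g$ fed into Lemma~\ref{lem:geometricsum} is genuinely nondecreasing and grows only polynomially — both are immediate for $g(N) = \binom{N+\alpha s-1}{\alpha s-1}$ — and then verifying that the chosen threshold $\alpha s\log_2 m$ in the definition of $K$ is exactly what converts $2^{N_m}$ into $2^m/m$ after absorbing the polynomial factor $N_m^{\alpha s-1}$. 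Everything else is a routine union bound.
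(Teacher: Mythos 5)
Your proof is correct and follows essentially the same route as the paper's: a union bound via Lemma~\ref{lem:PrZkbound}, the bijectivity of $\bscalE_\alpha$ together with $|B_{\alpha,\bsell,s}|\le 2^{\Vert\bsell\Vert_1}$, the count $\binom{N+\alpha s-1}{\alpha s-1}$ for the number of $\bsell$ with $\Vert\bsell\Vert_1=N$, and Lemma~\ref{lem:geometricsum} with $\rho=2$ to extract the factor $2^{N_m}g(N_m)=O(2^m/m)$. The only cosmetic difference is that you write $|B_{\alpha,\bsell,s}|=\prod_j(2^{\ell_j}-\lfloor 2^{\ell_j-1}\rfloor)$ while the paper writes $\prod_j\max(2^{\ell_j-1},1)$; these are the same quantity.
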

\begin{proof}
    By Lemma~\ref{lem:PrZkbound} and a union bound argument, it suffices to show $|K|=O(2^m/m)$. Because $\bscalE_\alpha$ is a bijection and $B_{\alpha,\bsell,s}$ are disjoint for different values of $\bsell$, we need to show
    $$\sum_{\substack{\bsell\in \natu_0^{\alpha s}\\\Vert\bsell\Vert_1\leq m-\alpha s \log_2(m)}}|B_{\alpha,\bsell,s}|=O(\frac{2^m}{m}).$$
    From equation~\eqref{eqn:Bals}, we see that 
    $$|B_{\alpha,\bsell,s}|=\prod_{j=1}^{\alpha s} \max(2^{\ell_j-1},1)\leq 2^{\Vert\bsell\Vert_1}.$$
    Because $\{\bsell\in \natu_0^{\alpha s}\mid \Vert\bsell\Vert_1=N\}$ corresponds to the set of $\alpha s$ ordered nonnegative integers summing to $N$, 
\begin{equation}\label{eqn:setpartition}
    |\{\bsell\in \natu_0^{\alpha s}\mid \Vert\bsell\Vert_1=N\}|={N+\alpha s-1\choose \alpha s-1}.
\end{equation}
By applying Lemma~\ref{lem:geometricsum}
    \begin{align*}
       \sum_{\substack{\bsell\in \natu_0^{\alpha s}\\\Vert\bsell\Vert_1\leq m-\alpha s \log_2(m)}}|B_{\alpha,\bsell,s}|
        &\leq \sum_{N=0}^{\lfloor m-\alpha s \log_2(m)\rfloor}{N+\alpha s-1\choose \alpha s-1}2^{N}\\
        &\leq {m-\alpha s \log_2(m)+\alpha s-1\choose \alpha s-1}2^{m-\alpha s \log_2(m)+1}\\
        &=O(\frac{2^m}{m}),
    \end{align*}
    which implies our conclusion.
\end{proof}

\begin{theorem}\label{thm:rateVa}
    Let $\alpha\in \natu$ and suppose $V_{\alpha}(f)<\infty$. When $r\geq m$, we have $\e((\hat{\mu}^{(r)}_{\infty}-\mu)^2)=O(n^{-2\alpha-1+\epsilon})$ for any $\epsilon>0$.
\end{theorem}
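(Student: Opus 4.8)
The plan is to apply Lemma~\ref{lem:medianMSE} with the event $\ca$ already chosen above, so everything reduces to (i) bounding the truncated variance sum $\sum_{\bsk\in\natu^s_*\setminus K}\Pr(Z(\bsk)=1)|\hat f(\bsk)|^2$ and (ii) controlling the second term $(8\delta)^r\Delta_m^2$. For (ii), Lemma~\ref{lem:smallpVa} gives $\delta=\Pr(\ca)=O(1/m)$, so for $m$ large enough $8\delta<1/2$, and with $r\ge m$ we get $(8\delta)^r\le 2^{-m}=1/n$, which combined with $\Delta_m\le 2\|f\|_\infty=O(1)$ (or the sharper $O(m^{s-1}2^{-m})$ bound — either suffices) makes this term $O(n^{-1})$, dominated by the target rate $O(n^{-2\alpha-1+\epsilon})$ since $\alpha\ge 1$. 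So the heart of the proof is (i).

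For (i), use the asymptotically full-rank property $\Pr(Z(\bsk)=1)\le 2^{-m+R}$ uniformly, so it remains to show $\sum_{\bsk\in\natu^s_*\setminus K}|\hat f(\bsk)|^2=O(n^{-2\alpha+\epsilon}\cdot\text{const})$; multiplying by $2^{-m+R}=O(n^{-1})$ then gives the $O(n^{-2\alpha-1+\epsilon})$ bound. I would re-index via the bijection $\bscalE_\alpha:\natu_0^{\alpha s}\to\natu_0^s$ and partition $\natu_0^{\alpha s}$ into the blocks $B_{\alpha,\bsell,s}$ indexed by $\bsell\in\natu_0^{\alpha s}$. By the definition of $K$, a block $B_{\alpha,\bsell,s}$ contributes to the complement sum precisely when $\|\bsell\|_1 > m-\alpha s\log_2(m)$ (plus the single excluded frequency $\bszero$, which contributes nothing since we restrict to $\natu_*^s$). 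Lemma~\ref{lem:Josefbound} bounds each block's contribution by $C_{\alpha,s}^2\,2^{-2\alpha\|\bsell\|_1}V_\alpha(f)^2$, so
\begin{align*}
\sum_{\bsk\in\natu^s_*\setminus K}|\hat f(\bsk)|^2
&\le C_{\alpha,s}^2 V_\alpha(f)^2\sum_{\substack{\bsell\in\natu_0^{\alpha s}\\ \|\bsell\|_1 > m-\alpha s\log_2(m)}}2^{-2\alpha\|\bsell\|_1}\\
&= C_{\alpha,s}^2 V_\alpha(f)^2\sum_{N > m-\alpha s\log_2(m)}\binom{N+\alpha s-1}{\alpha s-1}2^{-2\alpha N},
\end{align*}
using the count~\eqref{eqn:setpartition}. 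This tail sum is a polynomial-times-geometric series with ratio $2^{-2\alpha}<1$, so it is $O\big(N_0^{\alpha s-1}2^{-2\alpha N_0}\big)$ where $N_0=\lceil m-\alpha s\log_2(m)\rceil$; since $2^{-2\alpha N_0}=O(m^{2\alpha^2 s}2^{-2\alpha m})$ and the polynomial factor $N_0^{\alpha s-1}=O(m^{\alpha s-1})$, the whole sum is $O(m^{2\alpha^2 s+\alpha s}2^{-2\alpha m})=O(2^{-2\alpha m+\epsilon m})=O(n^{-2\alpha+\epsilon})$ for any $\epsilon>0$.

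Combining, $\Pr(\ca^c)\var(\hat\mu_\infty\giv\ca^c)\le 2^{-m+R}\cdot O(n^{-2\alpha+\epsilon})=O(n^{-2\alpha-1+\epsilon})$, and adding the $O(n^{-1})$ term from (ii) — which is absorbed since $2\alpha+1-\epsilon>1$ for small $\epsilon$ — yields $\e((\hat\mu^{(r)}_\infty-\mu)^2)=O(n^{-2\alpha-1+\epsilon})$, as claimed. The only mildly delicate point is the bookkeeping that turns the $\log_2 m$ shift in the truncation threshold into an $n^{\epsilon}$ loss rather than something worse: one must check that the polynomial factor $\binom{N+\alpha s-1}{\alpha s-1}$ and the constant $C_{\alpha,s}$ (which grows like $2^{\Theta(\alpha^2 s^2)}$ but is fixed once $\alpha,s$ are) only inflate the bound by $m^{O(1)}=O(n^{\epsilon})$; I expect this to be the main (though routine) obstacle, together with making sure the geometric-tail estimate is applied cleanly — Lemma~\ref{lem:geometricsum} can be reused on the reversed/tail sum by factoring out $2^{-2\alpha N_0}$.
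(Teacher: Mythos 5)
Your handling of part (i) matches the paper's proof in both structure and substance: same event $\ca$, same use of the asymptotically full-rank property, same reindexing via $\bscalE_\alpha$ and the blocks $B_{\alpha,\bsell,s}$, same invocation of Lemma~\ref{lem:Josefbound}, and the same polynomial-times-geometric tail estimate with the $\log_2 m$ shift absorbed into an $n^\epsilon$ factor. That part is sound.

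However, your treatment of part (ii) has a genuine gap, and the direction of the inequality you need is backwards. You argue that $8\delta<1/2$ for $m$ large, hence $(8\delta)^r\le 2^{-m}=n^{-1}$, and conclude this $O(n^{-1})$ term is ``dominated by'' $O(n^{-2\alpha-1+\epsilon})$ since $\alpha\ge 1$. But for $\alpha\ge 1$ we have $n^{-2\alpha-1+\epsilon}\ll n^{-1}$, so $O(n^{-1})$ does \emph{not} absorb into the target rate — it is strictly larger. If $(8\delta)^r\Delta_m^2$ were really only $O(n^{-1})$, the best you could conclude is $\e((\hat\mu^{(r)}_\infty-\mu)^2)=O(n^{-1})$, which is far weaker than the theorem claims. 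The same problem persists even with the sharper $\Delta_m=O(m^{s-1}2^{-m})$: that only gets you $(8\delta)^r\Delta_m^2=O(m^{2s-2}2^{-3m})=O(n^{-3+\epsilon})$, which still fails for $\alpha\ge 2$. The fix is to use the full strength of $\delta=O(1/m)$ rather than merely $8\delta<1/2$: with $\delta\le C/m$ and $r\ge m$, one has $(8\delta)^r\le(8C/m)^m=2^{-m\log_2(m/(8C))}$, and $\log_2(m/(8C))\to\infty$, so $(8\delta)^r$ decays faster than $2^{-cm}=n^{-c}$ for \emph{every} fixed $c$, in particular faster than $n^{-2\alpha-1}$. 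This is exactly how the paper handles the term. With this correction the rest of your argument goes through.
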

\begin{proof}
    By Lemma~\ref{lem:smallpVa}, we can apply Lemma~\ref{lem:medianMSE} with $\delta=C/m$ for some large enough $C>0$. Then for large enough $m$, $8\delta<1$ and $(8\delta)^r\leq (8C/m)^m\to 0$ at a rate faster than $n^{-2\alpha-1}=2^{-(2\alpha+1)m}$ for any $\alpha\in\natu$. It remains to show
    $${\Pr(\ca^c)}
\var(\hat{\mu}_\infty\giv \ca^c)\leq \sum_{\bsk\in\natu_*^s\setminus K }  \Pr(Z(\bsk)=1)|\hat{f}(\bsk)|^2=O(n^{-2\alpha-1+\epsilon}).$$

By Lemma~\ref{lem:PrZkbound}, for some constant $R$
\begin{align*}
    &\sum_{\bsk\in\natu_*^s\setminus K }  \Pr(Z(\bsk)=1)|\hat{f}(\bsk)|^2\\
    \leq & 2^{-m+R}\sum_{\bsk\in\natu_*^s\setminus K} |\hat{f}(\bsk)|^2\\
    =& 2^{-m+R}\sum_{\substack{\bsell\in \natu_0^{\alpha s}\\\Vert\bsell\Vert_1> m-\alpha s \log_2(m)}} \sum_{\bsk\in B_{\alpha,\bsell,s}}|\hat{f}(\bscalE_\alpha(\bsk))|^2\\
    \leq&  2^{-m+R}\sum_{\substack{\bsell\in \natu_0^{\alpha s}\\\Vert\bsell\Vert_1> m-\alpha s \log_2(m)}}  C^2_{\alpha,s} 4^{-\alpha\Vert\bsell\Vert_1} V_{\alpha}(f)^2,
\end{align*}
where we applied Lemma~\ref{lem:Josefbound} in the last step.

Next we use equation~\eqref{eqn:setpartition} and get
\begin{align*}
\sum_{\substack{\bsell\in \natu_0^{\alpha s}\\\Vert\bsell\Vert_1> m-\alpha s \log_2(m)}}  4^{-\alpha\Vert\bsell\Vert_1} &=\sum_{N=\lceil m-\alpha s \log_2(m) \rceil}^\infty {N+\alpha s-1\choose \alpha s-1} 4^{-\alpha N}\\
& \leq \sum_{N=\lceil m-\alpha s \log_2(m) \rceil}^\infty \frac{(N+\alpha s-1)^{\alpha s-1}}{(\alpha s-1)!}  4^{-\alpha N}.
\end{align*}
For arbitrarily small $\epsilon>0$, we have  $(N+\alpha s-1)^{\alpha s-1}<4^{\epsilon N}$ for large enough $N$. Therefore for large enough $m$
\begin{align}\label{eqn:epilsonrate}
\sum_{\substack{\bsell\in \natu_0^{\alpha s}\\\Vert\bsell\Vert_1> m-\alpha s \log_2(m)}}  4^{-\alpha\Vert\bsell\Vert_1}
&\leq \frac{1}{(\alpha s-1)!} \sum_{N=\lceil m-\alpha s \log_2(m) \rceil}^\infty  4^{-(\alpha-\epsilon) N}\nonumber\\
&=\frac{4^{-(\alpha-\epsilon)\lceil m-\alpha s \log_2(m) \rceil}}{(\alpha s-1)!(1-4^{-\alpha+\epsilon})}.
\end{align}
Finally we have for large enough $m$
  $${\Pr(\ca^c)}
\var(\hat{\mu}_\infty\giv \ca^c)\leq 2^{-m+R} C^2_{\alpha,s}V_{\alpha}(f)^2  \frac{4^{-(\alpha-\epsilon)\lceil m-\alpha s \log_2(m) \rceil}}{(\alpha s-1)!(1-2^{-\alpha+\epsilon})}.$$
Because $n=2^m$ and $\log_2(m)<\epsilon m$ for large enough $m$, our proof is complete once we choose a small enough $\epsilon$.
\end{proof}

\subsection{When dominating mixed derivatives of $f$ up to order $\alpha$ are continuous with bounded fractional variation} \label{subsec:Vlambda}

Next we consider the case when $f$ has dominating mixed derivatives up to order $\alpha$, namely $f$ can be partially differentiated in each variable up to $\alpha$ times. We will use the notation
$$f^{(\bsalpha)}=f^{(\alpha_1,\dots,\alpha_s)}=\frac{\partial^{\sum_{j=1}^s\alpha_j} f}{\partial x_1^{\alpha_1}\cdots\partial x_s^{\alpha_s}}.$$
We conventionally define $f^{(\bszero)}=f$. We say $f\in C^{(\alpha,\dots,\alpha)}([0,1]^s)$ if $f^{(\bsalpha)}\in C([0,1]^s)$ for all $\bsalpha\in \ints^s_{\le \alpha}$.

Notice that if $f^{(\bsalpha)}$ is continuous, equation~\eqref{eqn:Valphadef} can be viewed as a Riemann sum for the integral
$$\Bigl(\int_{[0,1]^s}f^{(\bsalpha)}(\bsx)^2\rd\bsx\Bigr)^{1/2}$$
and we immediately have $V^{(s)}_{\bsalpha}(f)<\infty$. Hence $f\in C^{(\alpha,\dots,\alpha)}([0,1]^s)$ for $\alpha\in \natu$ implies $V_\alpha(f)<\infty$ and the results from the previous subsection applies here as well.

Next we introduce the fractional Vitali variation with $\mathfrak{p}=2$ from \cite{dick:2008}. For a cube $J=\prod_{j=1}^s [d_j,e_j)$, we let $\Delta(f,J)$ to be the alternating sum of $f$ at vertices of $J$ where adjacent vertices have opposite signs. In other words, if we use $(\bsd_u,\bse_{u^c})$ to denote the $\bsx$ with $x_j=d_j$ if $j\in u$ and $x_j=e_j$ if $j\notin u$, then 
\begin{equation}\label{eqn:deltaf}
\Delta(f,J)=\sum_{u\subseteq 1{:}s}(-1)^{s-|u|}f\Big((\bsd_u,\bse_{u^c})\Big).
\end{equation}

For $0<\lambda\leq 1$, the fractional Vitali variation of order $\lambda$ is defined as
\begin{equation*}
    V^{(s)}_\lambda(f)=\sup_{\mathcal{P}}\Bigl(\sum_{J\in \mathcal{P}}\mathrm{Vol}(J)\Bigl|\frac{\Delta(f,J)}{\mathrm{Vol}(J)^\lambda}\Bigr|^2\Bigr)^{1/2}
\end{equation*}
where $\sup_{\mathcal{P}}$ is over all partitions of $[0,1)^s$ into subcubes $J$ of forms $\prod_{j=1}^s[a_j,b_j)$. For instance, a one-dimensional Brownian motion path over $[0,1]$ almost surely has a finite variation of order $\lambda$ for any $\lambda<1/2$ because it is almost surely $\lambda-$Hölder continuous for $\lambda<1/2$ \cite{Durrett2010}.

We also need a notion of variation with respect to a margin. For this purpose, we define, for $u\subseteq 1{:}s$, the partial integral operator $I_u:L^1([0,1]^s)\to L^1([0,1]^{|u|})$ as
\begin{equation}\label{eqn:Iu}
    I_u(f)(\bsx_u)=\int_{[0,1]^{s-|u|}}f(\bsx_{u^c};\bsx_u)\rd\bsx_{u^c}.
\end{equation}
When $u=1{:}s$, $I_{1{:}s}$ simply maps $f$ to itself. We also define 
\begin{equation}\label{eqn:Du}
    D_{u^c}=\{\rho_{u^c}\mid \rho_{u^c}(\bsx)=\prod_{j\in u^c}\rho_j(x_j), \rho_j\in C([0,1]), \rho_j\geq 0, \Vert \rho_j\Vert_\infty\leq 1\},
\end{equation}
namely functions that are products of one-dimensional nonnegative continuous functions $\rho_j$ in variable $x_j$ with $\Vert \rho_j\Vert_\infty\leq 1$ for $j\in u^c$. The fractional Vitali variation of order $\lambda$ with respect to a nonempty subset $u\subseteq 1{:}s$ is defined as
\begin{equation}\label{eqn:fractionalvariation}
   V^{u}_\lambda(f)=\sup_{\rho_{u^c}\in D_{u^c}} V^{(|u|)}_\lambda \Big(I_u(f\rho_{u^c})\Big). 
\end{equation}
When $u=1{:}s$, we conventionally define $V^{1{:}s}_\lambda(f)=V^{(s)}_\lambda(f)$. We note that because $\Vert \rho_j\Vert_\infty \leq 1$, for any cube $J\subseteq [0,1]^{|u|}$,
\begin{align*}
    \Big|\Delta(I_u(f\rho_{u^c}),J)\Big|^2
    &=\Bigl|\int_{[0,1]^{s-|u|}}\Delta( f( \text{ }\cdot\text{ };\bsx_{u^c}),J)\prod_{j\in u^c}\rho_j(x_j)\rd\bsx_{u^c}\Big|^2\\
    &\leq \int_{[0,1]^{s-|u|}}\Big|\Delta( f( \text{ }\cdot\text{ };\bsx_{u^c}),J)\Big|^2\rd\bsx_{u^c}.
\end{align*}
Hence
\begin{align*}
    \Big(V^{(|u|)}_\lambda \Big(I_u(f\rho_{u^c})\Big)\Big)^2
    & = \sup_{\mathcal{P}}\sum_{J\in \mathcal{P}}\mathrm{Vol}(J)\Bigl|\frac{\Delta(I_u(f\rho_{u^c}),J)}{\mathrm{Vol}(J)^\lambda}\Bigr|^2\\
    &\leq\sup_{\mathcal{P}}\int_{[0,1]^{s-|u|}}\sum_{J\in \mathcal{P}}\mathrm{Vol}(J)\Bigl|\frac{\Delta( f( \text{ }\cdot\text{ };\bsx_{u^c}),J)}{\mathrm{Vol}(J)^\lambda}\Bigr|^2\rd\bsx_{u^c}\\
    &\leq \int_{[0,1]^{s-|u|}}\Bigl(V^{(|u|)}_\lambda\Bigl( f(\text{ }\cdot\text{ };\bsx_{u^c})\Bigr)\Bigr)^2\rd\bsx_{u^c}
\end{align*}
where the last inequality follows by taking the supremum over $\mathcal{P}$ inside the integral. Therefore, we know $V^{u}_\lambda(f)<\infty$ if every section $f(\text{ }\cdot\text{ };\bsx_{u^c})$ has a bounded fractional Vitali variation.

Finally for $f\in C^{(\alpha,\dots,\alpha)}([0,1]^s)$ and $0<\lambda\leq 1$, we define 
$$N_{\alpha,\lambda}(f)=\sup_{\substack{u\subseteq 1{:}s\\u\neq \emptyset}}\Bigl(\sum_{\bsalpha\in \ints_{\le\alpha}^s}\Bigl(V^{u}_\lambda(f^{(\bsalpha)})\Bigr)^2\Bigr)^{1/2}.$$
Our goal in this subsection is to show the RMSE of $\hat{\mu}^{(r)}_\infty$ has a $O(n^{-\alpha-\lambda-1/2+\epsilon})$ convergence rate when $N_{\alpha,\lambda}(f)<\infty$. This rate is almost optimal because Theorem 5 of \cite{hein:nova:2002} says the optimal rate is of order $n^{-\alpha-\lambda-1/2}$ for the Hölder class $F^{\alpha,\lambda}_1$, which is a subspace of functions with finite $N_{\alpha,\lambda}(f)$ when $s=1$.

A crucial step in our proof is to bound $|\hat{f}(\bsk)|$ when $N_{\alpha,\lambda}(f)<\infty$. We are going to bound $|\hat{f}(\bsk)|$ differently according to the number of nonzeros in the binary representation of $\bsk$, so we introduce the following notations. For $k = \sum_{\ell=1}^\infty a_\ell 2^{\ell-1}$, we define the set of nonzero bits $\kappa = \{\ell\in \natu\mid a_\ell=1\}$. We will use $k$ and $\kappa$ interchangeably since one specifies the other. The number of nonzero bits of $k$ is given by the cardinality of $\kappa$ denoted as $|\kappa|$. We further use $\lceil\kappa\rceil_{q}$ to denote the $q$'th largest element of $\kappa$ and $\lceil\kappa\rceil_{1{:}q}$ to denote the subset of $\kappa$ containing the $q$ largest elements if $q\le |\kappa|$. We conventionally define $\lceil\kappa\rceil_{1{:}q}=\emptyset$ if $q=0$ and $\lceil\kappa\rceil_{q}=0$ if $q>|\kappa|$. Finally we let $\lceil\bsk\rceil_{q}=(\lceil\kappa_1\rceil_{q},\dots,\lceil\kappa_s\rceil_{q})$.

The following lemma relates $\hat{f}(\bsk)$ to the derivatives of $f$.

\begin{lemma}\label{lem:exactfk}
Let $\bsalpha\in \natu^s_0$ and $f^{(\bsalpha)}\in C([0,1]^s)$. If $ |\kappa_j|\geq \alpha_j$ for all $j\in 1{:}s$,
\begin{equation}\label{eqn:exactfk}
\hat{f}(\bsk)=(-1)^{\sum_{j=1}^s\alpha_j}\int_{[0,1]^s}f^{(\bsalpha)}(\bsx)\prod_{j=1}^s \walkappa{\kappa_j\setminus \lceil\kappa_j\rceil_{1{:}\alpha_j}}(x_j)W_{\lceil\kappa_j\rceil_{1{:}\alpha_j}}(x_j)\rd \bsx,
\end{equation} 
where 
$$\walkappa{\kappa}(x)=(-1)^{\vec{k}^\tran \vec{x}}=\prod_{\ell\in \kappa}(-1)^{\vec{x}(\ell)}$$
with $\vec{x}(\ell)$ denoting the $\ell$'th component of $\vec{x}$ and $W_\kappa (x)$ is defined recursively as
$$W_\emptyset(x)=1,$$
\begin{equation}\label{eqn:Wkdef}
  W_{\kappa}(x)=\int_{[0,1)}(-1)^{\vec{x}(\lfloor\kappa\rfloor)}W_{\kappa\setminus \lfloor\kappa\rfloor}(x)\rd x  
\end{equation}
where $\lfloor\kappa\rfloor$ denotes the smallest element of $\kappa$. In particular, for nonempty $\kappa$ we have $W_\kappa(x)$ is continuous, nonnegative, periodic with period $2^{-\lfloor\kappa\rfloor+1}$ and
\begin{equation}\label{eqn:Wint}
\int_{[0,1]}W_\kappa(x)\rd x=\prod_{\ell\in\kappa}2^{-\ell-1},
\end{equation}
\begin{equation}\label{eqn:Wmax}
    \max_{x\in [0,1]} W_\kappa(x)= 2\prod_{\ell\in\kappa} 2^{-\ell-1}.
\end{equation}
\end{lemma}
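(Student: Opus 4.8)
The plan is to establish, by induction, first the structural properties asserted for the functions $W_\kappa$ and then the Walsh-coefficient identity \eqref{eqn:exactfk}, the latter by repeated integration by parts that peels off one nonzero bit of each $\kappa_j$ at a time.

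For the properties of $W_\kappa$ with $\kappa$ nonempty, set $\ell=\lfloor\kappa\rfloor$ and $\kappa'=\kappa\setminus\{\ell\}$, and note that $W_{\kappa'}$ (by induction, or trivially equal to $1$ if $\kappa'=\emptyset$) is continuous, nonnegative, and periodic with a period dividing $2^{-\ell}$. By \eqref{eqn:Wkdef}, $W_\kappa$ is the primitive from $0$ of the bounded function $x\mapsto(-1)^{\vec{x}(\ell)}W_{\kappa'}(x)$, hence continuous; that integrand has period $2^{-\ell+1}$ with vanishing integral over one period (the pieces $\int_0^{2^{-\ell}}W_{\kappa'}$ and $\int_{2^{-\ell}}^{2^{-\ell+1}}W_{\kappa'}$ are equal by periodicity of $W_{\kappa'}$), so $W_\kappa$ is periodic with period $2^{-\lfloor\kappa\rfloor+1}$; on $[0,2^{-\ell})$ the integrand equals $W_{\kappa'}\ge0$ and on $[2^{-\ell},2^{-\ell+1})$ equals $-W_{\kappa'}\le0$, so over a period $W_\kappa$ rises from $0$ to $W_\kappa(2^{-\ell})=\int_0^{2^{-\ell}}W_{\kappa'}\ge0$ and falls back to $0$, giving nonnegativity and $\max W_\kappa=\int_0^{2^{-\ell}}W_{\kappa'}=2^{-\ell}\int_0^1 W_{\kappa'}$. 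Equation \eqref{eqn:Wint} then follows by writing $\int_0^1 W_\kappa=2^{\ell-1}\int_0^{2^{-\ell+1}}W_\kappa$, swapping the order in the iterated integral and simplifying with periodicity of $W_{\kappa'}$ to reach $\int_0^1 W_\kappa=2^{-\ell-1}\int_0^1 W_{\kappa'}$; and \eqref{eqn:Wmax} then drops out of the displayed maximum.

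For the identity, take first $s=1$, write $\kappa=\{\ell_1<\dots<\ell_q\}$ with $q=|\kappa|\ge\alpha$ — this is precisely where the hypothesis $|\kappa_j|\ge\alpha_j$ is used — and put $S_t=\{\ell_{q-t+1},\dots,\ell_q\}$, so that $S_0=\emptyset$, $S_\alpha=\lceil\kappa\rceil_{1{:}\alpha}$, $\lfloor S_t\rfloor=\ell_{q-t+1}$, and $S_{t-1}=S_t\setminus\{\lfloor S_t\rfloor\}$. I will show by induction on $t=0,\dots,\alpha$ that $\hat f(k)=(-1)^t\int_0^1 f^{(t)}(x)\,\mathrm{wal}_{\kappa\setminus S_t}(x)\,W_{S_t}(x)\rd x$, the case $t=0$ being the definition of $\hat f(k)$ and $t=\alpha$ being \eqref{eqn:exactfk}. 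In the inductive step one factors $\mathrm{wal}_{\kappa\setminus S_{t-1}}(x)=(-1)^{\vec{x}(\lfloor S_t\rfloor)}\mathrm{wal}_{\kappa\setminus S_t}(x)$, partitions $[0,1)$ into the $2^{\lfloor S_t\rfloor-1}$ dyadic intervals of length $2^{-\lfloor S_t\rfloor+1}$ on each of which $\mathrm{wal}_{\kappa\setminus S_t}$ (a Walsh function in bits $<\lfloor S_t\rfloor$) is constant, and integrates by parts on each such interval $I$: by \eqref{eqn:Wkdef}, $(-1)^{\vec{x}(\lfloor S_t\rfloor)}W_{S_{t-1}}(x)$ has primitive $W_{S_t}(x)$, which — continuous, periodic with period $|I|$, and zero at $0$ — vanishes at both endpoints of $I$, so the boundary term disappears and a derivative passes onto $f$ with a factor $-1$; summing over $I$ and using the constancy of $\mathrm{wal}_{\kappa\setminus S_t}$ reassembles the integral over $[0,1]$. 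For general $s$ one applies this one-dimensional identity successively in $x_1,\dots,x_s$ with the remaining variables held fixed, pulling each integral back out by Fubini; the mixed partial derivatives that appear commute under the continuity hypothesis, the signs multiply to $(-1)^{\sum_j\alpha_j}$, and the $\mathrm{wal}$- and $W$-factors multiply, yielding \eqref{eqn:exactfk}. The integrations by parts require all intermediate mixed partial derivatives of $f$ up to order $\bsalpha$ to be continuous, which holds in our applications since there $f\in C^{(\alpha,\dots,\alpha)}([0,1]^s)$; I will make this assumption explicit.

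The main obstacle is bookkeeping rather than analysis. Walsh functions are not differentiable, so the integration by parts is legitimate only after the dyadic subdivision, and one must track precisely which bit is peeled at each step and in which order, so that the period $2^{-\lfloor S_t\rfloor+1}$ of the freshly created $W_{S_t}$ coincides exactly with the length of the intervals on which the residual factor $\mathrm{wal}_{\kappa\setminus S_t}$ is constant. This alignment — that the smallest of the $t$ largest bits of $\kappa$ exceeds every bit remaining in $\kappa\setminus S_t$ — is exactly what forces all the boundary terms to vanish and lets the recursion close.
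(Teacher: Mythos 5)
Your proof is correct and self-contained, whereas the paper disposes of this lemma by citation alone (Theorem~2.5 and Section~3 of the Suzuki--Yoshiki paper). The substance is the same standard argument those authors give --- iterated integration by parts that peels the largest bits off each $\kappa_j$, with the dyadic subdivision at scale $2^{-\lfloor S_t\rfloor+1}$ exactly matching the period of the freshly created primitive $W_{S_t}$ so boundary terms vanish --- so this is less a genuinely different route than a filling-in of the step the paper delegates. Two observations of yours are worth preserving. First, you tacitly read the recursion~\eqref{eqn:Wkdef} as defining $W_\kappa$ to be the antiderivative from $0$ of $x\mapsto(-1)^{\vec{x}(\lfloor\kappa\rfloor)}W_{\kappa\setminus\lfloor\kappa\rfloor}(x)$ rather than a definite integral over $[0,1)$; under the literal reading $W_\kappa$ would be a constant and the lemma vacuous, so your interpretation is the only workable one and matches the cited source. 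Second, your remark that the iterated integration by parts requires all intermediate mixed partials to be continuous (not merely $f^{(\bsalpha)}$, which is all the hypothesis literally asserts) is a genuine, if minor, imprecision in the statement; it is harmless in this paper because every invocation either has $\bsalpha$ supported on a single coordinate with value $1$, or sits under the blanket assumption $f\in C^{(\alpha,\dots,\alpha)}([0,1]^s)$ or $f^{(\bsalpha)}\in\mathcal{H}_{\lambda,s,p}$ for all $\bsalpha\in\ints_{\le\alpha}^s$, which supplies exactly the continuity you flag.
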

\begin{proof}
Equation~\eqref{eqn:exactfk} comes from Theorem 2.5 of \cite{SUZUKI20161} while properties of $W_\kappa(x)$ is proven in Section 3 of \cite{SUZUKI20161}.
\end{proof}

Because we will often take the maximum and minimum over $j\in 1{:}s$, we will omit the argument $j\in 1{:}s$ when it is clear from the context. For instance, we will denote the maximum and minimum of $|\kappa_j|$ over $j\in 1{:}s$ as $\max |\kappa_j|$ and $\min |\kappa_j|$. 

First we derive a crude bound on $|\hat{f}(\bsk)|$ that applies to all $\bsk\neq \bszero$.

\begin{lemma}\label{lem:fkboundlealpha}
Let $f\in C^{(1,\dots,1)}([0,1]^s)$. If $\bsk\neq \bszero$, then
$$|\hat{f}(\bsk)|\leq C_f 2^{-\max \lceil \kappa_j\rceil_1} $$
where 
$$C_f=\max\Bigl\Vert\frac{\partial f}{\partial x_j}\Bigr\Vert_{L^1([0,1]^s)}<\infty.$$
\end{lemma}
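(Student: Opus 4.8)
The statement is a crude upper bound on a single Walsh coefficient $|\hat f(\bsk)|$ for arbitrary nonzero $\bsk$, using only first-order smoothness of $f$. The natural tool is Lemma~\ref{lem:exactfk}: since $f\in C^{(1,\dots,1)}$, I can apply the identity~\eqref{eqn:exactfk} but only integrating by parts in the coordinates where $k_j\neq 0$. Concretely, let $u=\{j:\kappa_j\neq\emptyset\}$ and take $\alpha_j=1$ for $j\in u$, $\alpha_j=0$ for $j\notin u$; the hypothesis $|\kappa_j|\ge\alpha_j$ is then satisfied, so
$$
\hat f(\bsk)=(-1)^{|u|}\int_{[0,1]^s} f^{(\bsalpha)}(\bsx)\prod_{j\in u}\walkappa{\kappa_j\setminus\lceil\kappa_j\rceil_1}(x_j)\,W_{\lceil\kappa_j\rceil_1}(x_j)\prod_{j\notin u}\walkappa{\emptyset}(x_j)\rd\bsx,
$$
where $f^{(\bsalpha)}$ is just the first partial derivative in each variable of $u$ (an $L^1$ function by continuity), the $\walkappa{\cdot}$ factors are bounded by $1$, and $\walkappa{\emptyset}\equiv 1$.

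The key estimate is then on the $W$-factors. By~\eqref{eqn:Wmax} with $\kappa=\{\ell\}$ a singleton, $\max_x W_{\{\ell\}}(x)=2\cdot 2^{-\ell-1}=2^{-\ell}$. Applying this with $\ell=\lceil\kappa_j\rceil_1$ for each $j\in u$, I bound the product of $W$-factors pointwise by $\prod_{j\in u}2^{-\lceil\kappa_j\rceil_1}$. Pulling this constant out, the remaining integral is at most $\int_{[0,1]^s}|f^{(\bsalpha)}(\bsx)|\rd\bsx=\|\partial^{|u|}f/\prod_{j\in u}\partial x_j\|_{L^1([0,1]^s)}$, which is finite since $f^{(\bsalpha)}\in C([0,1]^s)$. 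Since $2^{-\lceil\kappa_j\rceil_1}\le 1$ for every $j\in u$, the whole product $\prod_{j\in u}2^{-\lceil\kappa_j\rceil_1}$ is dominated by its single largest-exponent factor, i.e.\ by $2^{-\max_j\lceil\kappa_j\rceil_1}$ (the max over $j\notin u$ contributes an exponent $0$, hence a factor $1$, so it does no harm). This gives
$$
|\hat f(\bsk)|\le 2^{-\max\lceil\kappa_j\rceil_1}\,\Bigl\Vert\frac{\partial^{|u|}f}{\prod_{j\in u}\partial x_j}\Bigr\Vert_{L^1([0,1]^s)}\le C_f\,2^{-\max\lceil\kappa_j\rceil_1},
$$
after bounding the mixed $L^1$ norm by the worst single first-derivative norm — which is the natural candidate for $C_f$ and matches the stated constant up to checking that the mixed-derivative $L^1$ norm is $\le\max_j\|\partial f/\partial x_j\|_{L^1}$ (true because integrating $|\partial^{|u|}f/\prod\partial x_j|$ over the $u$-variables first, then bounding in the remaining variables, or more simply by iterating a one-variable $L^1$ bound, is controlled by a single-coordinate derivative norm; one can also just define $C_f$ as this mixed norm and note finiteness suffices).

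I expect the only real subtlety to be the constant: one must make sure the claimed $C_f=\max_j\|\partial f/\partial x_j\|_{L^1([0,1]^s)}$ actually dominates $\|\partial^{|u|}f/\prod_{j\in u}\partial x_j\|_{L^1}$ for all $u$, which requires a short argument (e.g.\ successively integrating out variables and using $\int_0^1|\partial_{x_j}g(\bsx)|\rd x_j\le$ a one-dimensional total-variation-type bound, or simply observing that for the purposes of the paper any finite $C_f$ depending only on $f$ works). The Walsh-function and $W_\kappa$ bounds are immediate from Lemma~\ref{lem:exactfk}, and the reduction of the product of $2^{-\lceil\kappa_j\rceil_1}$ to its max is trivial, so the proof is short.
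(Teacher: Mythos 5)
Your reduction to Lemma~\ref{lem:exactfk} is the right tool, and your handling of the $W$-factors via equation~\eqref{eqn:Wmax} is correct, but you integrate by parts in too many variables and this breaks the constant. By taking $\alpha_j=1$ for \emph{all} $j\in u=\{j:\kappa_j\neq\emptyset\}$ you arrive at the mixed $L^1$ norm $\|\partial^{|u|}f/\prod_{j\in u}\partial x_j\|_{L^1([0,1]^s)}$, and you then assert this is dominated by $C_f=\max_j\|\partial f/\partial x_j\|_{L^1}$. That inequality is false in general. For example, with $s=2$, $f(x_1,x_2)=g(x_1)g(x_2)$ and $g(x)=\sin(2\pi Nx)/(2\pi N)$, one has $\|\partial f/\partial x_j\|_{L^1}=\|g'\|_{L^1}\|g\|_{L^1}=O(1/N)$ while $\|\partial^2 f/\partial x_1\partial x_2\|_{L^1}=\|g'\|_{L^1}^2=O(1)$, so the mixed norm is arbitrarily larger than $C_f$. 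There is no ``short argument'' closing this gap; the inequality you need simply does not hold, and integrating out the $u$-variables one at a time does not rescue it either. Your fallback observation that any finite $f$-dependent constant suffices for the downstream theorems is true but does not prove the lemma as stated.

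The paper sidesteps the issue by integrating by parts in \emph{only one} coordinate: it picks a single index $J$ maximizing $\lceil\kappa_j\rceil_1$ (nonempty since $\bsk\neq\bszero$), applies Lemma~\ref{lem:exactfk} with $\alpha_J=1$ and $\alpha_j=0$ for $j\neq J$, bounds the single factor $W_{\{\lceil\kappa_J\rceil_1\}}$ by $2^{-\lceil\kappa_J\rceil_1}$, and is left with $\|\partial f/\partial x_J\|_{L^1}\leq C_f$ directly. The extra factors $2^{-\lceil\kappa_j\rceil_1}\leq 1$ for $j\neq J$ that you were collecting buy nothing for this crude bound, and collecting them costs you the correct constant. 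The fix to your argument is therefore to restrict the choice of $\bsalpha$ to a single nonzero coordinate rather than all of $u$.
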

\begin{proof}
    $f\in C^{(1,\dots,1)}([0,1]^s)$ implies $\partial f/{\partial x_j}\in C([0,1]^s)$ for each $j\in 1{:}s$ and hence $C_f<\infty$. Let $J$ be any index $j$ that maximizes $\lceil \kappa_j\rceil_1$. By applying Lemma~\ref{lem:exactfk} with $\bsalpha=(\alpha_1,\dots,\alpha_s)$ where $\alpha_j=1$ if $j=J$ and $\alpha_j=0$ if $j\neq J$, we have 
\begin{align*}
    |\hat{f}(\bsk)|&=\Big|\int_{[0,1]^s}\frac{\partial f}{\partial x_J}(\bsx)\Big(\prod_{j=1}^s \walkappa{\kappa_j\setminus \lceil\kappa_j\rceil_{1{:}\alpha_j}}(x_j)\Big)W_{\{\lceil\kappa_J\rceil_{1}\}}(x_J)\rd \bsx\Big|\\
    &\leq \Big(\max_{x\in [0,1]} W_{\{\lceil\kappa_J\rceil_{1}\}}(x)\Big)\int_{[0,1]^s}\Big|\frac{\partial f}{\partial x_J}(\bsx)\Big|\rd \bsx\\
    &=2^{-\lceil\kappa_J\rceil_{1}}\Bigl\Vert\frac{\partial f}{\partial x_j}\Bigr\Vert_{L^1([0,1]^s)},
\end{align*}
where we have used $|\walkappa{\kappa}(x)|=1$ and equation~\eqref{eqn:Wmax}. The conclusion follows once we maximize the bound over $j\in 1{:}s$.
\end{proof}

We need a finer bound when $ \lceil \bsk\rceil_{\alpha+1}\neq \bszero$. First we study the case when all components of $\lceil \bsk\rceil_{\alpha+1}$ are positive.

\begin{theorem}\label{thm:fkboundgealpha}
Let $\alpha\in \natu_0$ and $0<\lambda\le 1$. Suppose $f^{(\alpha,\dots,\alpha)}\in C([0,1]^s)$ and $V^{(s)}_\lambda(f^{(\alpha,\dots,\alpha)})<\infty$. For $\bsell=(\ell_1,\dots,\ell_s)\in \natu^s$, we define
$$B_{\alpha,\bsell,s}=\{\bsk\in \natu_*^s\mid  \lceil\bsk\rceil_{\alpha+1}=\bsell\}.$$
 Then
 \begin{equation*}
    \Bigl(\sum_{\bsk\in B_{\alpha,\bsell,s}}|\hat{f}(\bsk)|^2\Bigr)^{1/2}\leq V^{(s)}_\lambda(f^{(\alpha,\dots,\alpha)}) 2^{-(\alpha+\lambda)\Vert\bsell\Vert_1}.
 \end{equation*}
\end{theorem}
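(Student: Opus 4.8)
The plan is to apply Lemma~\ref{lem:exactfk} with $\bsalpha=(\alpha,\dots,\alpha)$ and then extract, one dyadic difference of $g:=f^{(\alpha,\dots,\alpha)}$ per coordinate, from the Walsh factor. Since $\lceil\bsk\rceil_{\alpha+1}=\bsell\in\natu^s$, every $\kappa_j$ has at least $\alpha+1$ nonzero bits, so the hypothesis $|\kappa_j|\ge\alpha$ of Lemma~\ref{lem:exactfk} is met and, with $\sigma_j:=\lceil\kappa_j\rceil_{1{:}\alpha}$,
\[
\hat f(\bsk)=(-1)^{s\alpha}\int_{[0,1)^s}g(\bsx)\prod_{j=1}^s\walkappa{\kappa_j\setminus\sigma_j}(x_j)\,W_{\sigma_j}(x_j)\rd\bsx .
\]
Writing $\kappa_j=\sigma_j\sqcup\{\ell_j\}\sqcup\nu_j$ with $\nu_j\subseteq\{1,\dots,\ell_j-1\}$ gives a bijection between $B_{\alpha,\bsell,s}$ and the pairs $\bigl((\sigma_1,\dots,\sigma_s),(\nu_1,\dots,\nu_s)\bigr)$, each $\sigma_j$ being an $\alpha$-element subset of $\{\ell_j+1,\ell_j+2,\dots\}$; moreover $\walkappa{\kappa_j\setminus\sigma_j}(x_j)=(-1)^{\vec{x}_j(\ell_j)}\walkappa{\nu_j}(x_j)$ isolates the square wave of period $2^{-\ell_j+1}$.

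Next I would split the integral over the dyadic grid of mesh $2^{-\ell_j}$ in coordinate $j$ and regroup the $2^s$ cells lying in each coarser $2^{-\ell_j+1}$-block. On each cell $\walkappa{\nu_j}(x_j)$ is a constant sign depending only on the block index $a_j$, and—because the smallest element of $\sigma_j$ exceeds $\ell_j$—$W_{\sigma_j}$ is periodic with period dividing $2^{-\ell_j}$. Pulling the $\walkappa{\nu_j}$ signs out and using this periodicity to align the arguments of $W_{\sigma_j}$ across the $2^s$ cells of a block, the square-wave factors $\prod_j(-1)^{\vec{x}_j(\ell_j)}$ reassemble into the alternating vertex sum $\Delta(g,\cdot)$ of \eqref{eqn:deltaf}, giving
\[
\hat f(\bsk)=(-1)^{s\alpha}\sum_{\bsa}\varepsilon_{\bsa}(\boldsymbol{\nu})\,G(\bsa;\boldsymbol{\sigma}),\qquad
G(\bsa;\boldsymbol{\sigma})=\int_{\prod_j[0,2^{-\ell_j})}\Delta(g,\tilde J_{\bsa,\bsz})\prod_{j=1}^s W_{\sigma_j}(z_j)\rd\bsz ,
\]
where $\varepsilon_{\bsa}(\boldsymbol{\nu})\in\{\pm1\}$, $\tilde J_{\bsa,\bsz}$ is the box with side $2^{-\ell_j}$ in coordinate $j$ and lower corner (block corner)$+\bsz$, and—crucially—$G(\bsa;\boldsymbol{\sigma})$ does not depend on $\boldsymbol{\nu}$.

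To compute the $\ell^2$-norm I would sum $|\hat f(\bsk)|^2$ first over $\boldsymbol{\nu}$ with $\boldsymbol{\sigma}$ fixed: as $\nu_j$ ranges over all subsets of $\{1,\dots,\ell_j-1\}$, the signs $a_j\mapsto\varepsilon$ run over all $2^{\ell_j-1}$ characters of the group $\{0,1\}^{\ell_j-1}$, so Parseval yields $\sum_{\boldsymbol{\nu}}|\hat f(\bsk)|^2=2^{\Vert\bsell\Vert_1-s}\sum_{\bsa}|G(\bsa;\boldsymbol{\sigma})|^2$. Each $|G(\bsa;\boldsymbol{\sigma})|^2$ is bounded by the Cauchy--Schwarz inequality against the nonnegative weight $\prod_jW_{\sigma_j}$, using $\int_{\prod_j[0,2^{-\ell_j})}\prod_jW_{\sigma_j}(z_j)\rd\bsz=2^{-\Vert\bsell\Vert_1}\prod_j\prod_{\ell\in\sigma_j}2^{-\ell-1}$ (from the periodicity of $W_{\sigma_j}$ and \eqref{eqn:Wint}). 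For the leftover $\sum_{\bsa}\int_{\bsz}\Delta(g,\tilde J_{\bsa,\bsz})^2\prod_jW_{\sigma_j}(z_j)\rd\bsz$ I would exchange sum and integral and, for each fixed $\bsz$, observe that $\{\tilde J_{\bsa,\bsz}\}_{\bsa}$ is a finite family of pairwise disjoint subboxes of $[0,1)^s$; completing it to a box partition of $[0,1)^s$ and invoking the definition of $V^{(s)}_\lambda(g)$ (all summands nonnegative) gives $\sum_{\bsa}\Delta(g,\tilde J_{\bsa,\bsz})^2\le 2^{(1-2\lambda)\Vert\bsell\Vert_1}\bigl(V^{(s)}_\lambda(g)\bigr)^2$, since every $\tilde J_{\bsa,\bsz}$ has volume $2^{-\Vert\bsell\Vert_1}$. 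Summing over $\boldsymbol{\sigma}$ finally produces an elementary-symmetric sum $\sum_{\boldsymbol{\sigma}}\prod_j\prod_{\ell\in\sigma_j}2^{-2\ell-2}$ bounded by a constant times $2^{-2\alpha\Vert\bsell\Vert_1}$; collecting all powers of $2$ and the combinatorial constants, the total prefactor is at most $1$, so $\sum_{\bsk\in B_{\alpha,\bsell,s}}|\hat f(\bsk)|^2\le\bigl(V^{(s)}_\lambda(g)\bigr)^2\,2^{-2(\alpha+\lambda)\Vert\bsell\Vert_1}$, which is the claim.

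I expect the main obstacle to be the two bookkeeping steps in the middle. First, the cell-regrouping must be done carefully enough to confirm that the product of square waves reconstitutes exactly the signed vertex sum $\Delta(g,\cdot)$—with the correct signs and with the argument of $W_{\sigma_j}$ reduced modulo its period—which is where the strict inequality $\lceil\kappa_j\rceil_{\alpha+1}=\ell_j<\lceil\kappa_j\rceil_\alpha$ enters. Second, and more subtly, the family $\{\tilde J_{\bsa,\bsz}\}_{\bsa}$ does not tile $[0,1)^s$, so it cannot be fed directly into the supremum defining $V^{(s)}_\lambda$; the remedy is that any finite collection of pairwise disjoint axis-parallel boxes in $[0,1)^s$ extends to a box partition of $[0,1)^s$, and this step costs only the harmless factor $2^{(1-2\lambda)\Vert\bsell\Vert_1}$ because all boxes in the family share the common volume $2^{-\Vert\bsell\Vert_1}$. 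The Parseval, Cauchy--Schwarz, and closing geometric/elementary-symmetric estimates are then routine, and when $\alpha=0$ the weights $W_{\sigma_j}$ disappear and the argument runs identically but more simply.
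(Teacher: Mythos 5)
Your proposal follows essentially the same route as the paper's proof: apply Lemma~\ref{lem:exactfk} with $\bsalpha=(\alpha,\dots,\alpha)$, decompose each $\kappa_j$ into the top $\alpha$ bits, the pivot bit $\ell_j$, and the low bits $\nu_j$, regroup the integral over blocks of mesh $2^{-\ell_j+1}$ so the square-wave factor $\prod_j(-1)^{\vec{x}_j(\ell_j)}$ reassembles into $\Delta(g,\cdot)$ with $W_{\sigma_j}$ reduced modulo its period, apply Parseval over the low bits, extend the family of disjoint dyadic boxes to a full partition to invoke $V^{(s)}_\lambda(g)$, and close with the geometric sum over the $\sigma_j$'s giving the factor $4^{-\alpha\Vert\bsell\Vert_1}$. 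The single point of departure is how you bound each block contribution: you use the weighted Cauchy--Schwarz inequality $|G(\bsa)|^2\le\bigl(\int W\bigr)\bigl(\int\Delta^2 W\bigr)$ and then take a pointwise-in-$\bsz$ supremum of $\sum_{\bsa}\Delta^2$, whereas the paper uses the $L^\infty$--$L^1$ bound $|f_{\bsa}|\le\bigl(\sup|\Delta|\bigr)\bigl(\int W\bigr)$ and takes the supremum over the choice of one representative $\bsx_{\bsa}$ per block. Both are valid, both rely on the same box-completion argument, and both yield exactly the same prefactor $2^{-s}4^{-\lambda\Vert\bsell\Vert_1}\prod_j\prod_{\ell\in\sigma_j}4^{-\ell-1}$ before summing over $\boldsymbol\sigma$, so the final constants agree. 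Your accounting of the factor $2^{(1-2\lambda)\Vert\bsell\Vert_1}$ (the conversion between $\sum\Delta^2$ and the $V^{(s)}_\lambda$ normalization, not a cost of the partition-extension itself) and of the elementary-symmetric sum over $\boldsymbol\sigma$ is correct, as is the observation that the total constant $2^{-s}(12^{-\alpha}/\alpha!)^s\le 1$ when $\alpha\ge 1$ and $2^{-s}\le 1$ when $\alpha=0$.
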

\begin{proof} Note that $\lceil\bsk\rceil_{\alpha+1}=\bsell$ implies $\min |\kappa_j|>\alpha$ because $\min\ell_j>0$. For simplicity, we denote $\lceil\kappa_j\rceil_{1{:}\alpha}$ as $\kappa^+_j$ and $\kappa_j\setminus\lceil\kappa_j\rceil_{1{:}\alpha+1}$ as $\kappa^-_j$. For each $\bsk\in B_{\alpha,\bsell,s}$, $\kappa_j$ is the disjoint union of $\kappa^+_j,\kappa^-_j$ and $\ell_j$. $\kappa^+_j$ comprises of $\alpha$ distinct numbers larger than $\ell_j$. $\kappa^-_j$ is an arbitrary subset of $1{:}\ell_j-1$, where $1{:}\ell_j-1=\emptyset$ if $\ell_j=1$.

To simplify the proof, we let $g(\bsx)=f^{(\alpha,\dots,\alpha)}(\bsx)$. Equation~\eqref{eqn:exactfk} with $\bsalpha=(\alpha,\dots,\alpha)$ for $\bsk\in B_{\alpha,\bsell,s}$ can be written as
\begin{align*}
|\hat{f}(\bsk)|^2
&=\Bigl|\int_{[0,1]^s}g(\bsx)\prod_{j=1}^s\walkappa{\kappa^-_j}(x_j)(-1)^{\vec{x}_j(\ell_j)}W_{\kappa^+_j}(x_j)\rd \bsx\Bigr|^2\\
&=\Bigl|\sum_{\bsa\in A}\int_{\mathrm{EI}(\bsell-\bsone_{1{:}s},\bsa)}g(\bsx)\prod_{j=1}^s\walkappa{\kappa^-_j}(x_j)(-1)^{\vec{x}_j(\ell_j)}W_{\kappa^+_j}(x_j)\rd \bsx\Bigr|^2
\end{align*}
where 
$$\mathrm{EI}(\bsell-\bsone_{1{:}s},\bsa)=\prod_{j=1}^s [\frac{a_j}{2^{\ell_j-1}},\frac{a_j+1}{2^{\ell_j-1}}),$$
$$A=\{(a_1,\dots,a_s)\mid a_j\in\ints_{<2^{\ell_j-1}}\}.$$
Because $\walkappa{\kappa^-_j}(x_j)$ is constant over $x_j\in[\frac{a_j}{2^{\ell_j-1}},\frac{a_j+1}{2^{\ell_j-1}})$ and $W_{\kappa^+_j}(x_j)$ has a period that divides $2^{-\ell_j}$, we can use $\Delta(f,J)$ defined in equation~\eqref{eqn:deltaf} and write
\begin{align}\label{eqn:fktodeltaf}
&\int_{\mathrm{EI}(\bsell-\bsone_{1{:}s},\bsa)}g(\bsx)\prod_{j=1}^s\walkappa{\kappa^-_j}(x_j)(-1)^{\vec{x}_j[\ell_j]}W_{\kappa^+_j}(x_j)\rd \bsx\nonumber\\
=&\Big(\prod_{j=1}^s\walkappa{\kappa^-_j}(\frac{a_j}{2^{\ell_j-1}})\Big) \underbrace{\int_{\mathrm{EI}(\bsell,2\bsa)} \Delta(g,\bsx+J_{\bsell})\prod_{j=1}^s W_{\kappa^+_j}(x_j)\rd \bsx}_{f_{\bsa}}
\end{align}
where
$$\mathrm{EI}(\bsell,2\bsa)=\prod_{j=1}^s [\frac{2a_j}{2^{\ell_j}},\frac{2a_j+1}{2^{\ell_j}}),$$
$$\bsx+J_{\bsell}=\prod_{j=1}^s [x_j,x_j+2^{-\ell_j}).$$
Next we notice that just as $\{\walbsk(\bsx)\mid\bsk\in\natu_0^s\}$ form an orthonormal basis of $L^2([0,1]^s)$, 
$$\Bigl\{\frac{1}{\sqrt{|A|}}\prod_{j=1}^s\walkappa{\kappa^-_j}(\frac{a_j}{2^{\ell_j-1}})\Bigl | \kappa^-_j\subseteq 1{:}\ell_j-1,j\in 1{:}s\Bigr\}$$
form an orthonormal basis of the $|A|$-dimensional vector space indexed by $\bsa\in A$. Hence once we sum $|\hat{f}(\bsk)|^2$ over all possible $\kappa^-_j\subseteq 1{:}\ell_j-1,j\in 1{:}s$ while fixing $\kappa^+_j, j\in 1{:}s$, we have by equation~\eqref{eqn:fktodeltaf} and Parseval's identity
\begin{align}\label{eqn:fksquaresum}
\sum_{\kappa^-_1,\dots,\kappa^-_s}|\hat{f}(\bsk)|^2=\sum_{\kappa^-_1,\dots,\kappa^-_s}\Bigl|\sum_{\bsa\in A}\Bigl(\frac{1}{\sqrt{|A|}}\prod_{j=1}^s\walkappa{\kappa^-_j}(\frac{a_j}{2^{\ell_j-1}})\Bigr)\sqrt{|A|}f_{\bsa}\Bigr|^2
=|A|\sum_{\bsa\in A}|f_{\bsa}|^2.
\end{align}
Next, Hölder's inequality implies
\begin{equation}\label{eqn:supdelta}
 |f_{\bsa}|\leq \Big(\sup_{\bsx\in \mathrm{EI}(\bsell,2\bsa)}|\Delta(g,\bsx+J_{\bsell})|\Big)\Bigl(\int_{\mathrm{EI}(\bsell,2\bsa)} \prod_{j=1}^s W_{\kappa^+_j}(x_j)\rd \bsx\Bigr). 
\end{equation}
Using periodicity of $W_{\kappa}(x)$, we have
\begin{align*}
\int_{\mathrm{EI}(\bsell,2\bsa)} \prod_{j=1}^s W_{\kappa^+_j}(x_j)\rd \bsx
&=\prod_{j=1}^s \int_{[2a_j2^{-\ell_j},(2a_j+1)2^{-\ell_j})}W_{\kappa^+_j}(x_j)\rd x_j\\
&=\prod_{j=1}^s 2^{-\ell_j}\int_{[0,1)}W_{\kappa^+_j}(x_j)\rd x_j\\
&=\prod_{j=1}^s 2^{-\ell_j}\prod_{\ell'\in \kappa^+_j}2^{-\ell'-1}
\end{align*}
where we applied equation~\eqref{eqn:Wint} in the last step. Putting $|A|=\prod_{j=1}^s 2^{\ell_j-1},\mathrm{Vol}(J_\ell)=\prod_{j=1}^s 2^{-\ell_j}$, equation~\eqref{eqn:supdelta} and the above equation into equation~\eqref{eqn:fksquaresum}, we get
\begin{align*}
\sum_{\kappa^-_1,\dots,\kappa^-_s}|\hat{f}(\bsk)|^2
&\leq \Bigl(\prod_{j=1}^s 2^{\ell_j-1}\Bigr)\sum_{\bsa\in A}\Big(\sup_{\bsx\in \mathrm{EI}(\bsell,2\bsa)}|\Delta(g,\bsx+J_{\bsell})|\Big)^2 \prod_{j=1}^s 4^{-\ell_j}\prod_{\ell'\in \kappa^+_j}4^{-\ell'-1}\\
&=2^{-s}\Bigl(\prod_{j=1}^s 4^{-\lambda \ell_j}\prod_{\ell'\in \kappa^+_j}4^{-\ell'-1}\Bigr)\sum_{\bsa\in A}\sup_{\bsx\in \mathrm{EI}(\bsell,2\bsa)}\mathrm{Vol}(J_\ell)\Bigl(\frac{\Delta(g,\bsx+J_{\bsell})}{\mathrm{Vol}(J_\ell)^\lambda}\Bigr)^2 
\end{align*}
For any $\bsx_{\bsa}\in \mathrm{EI}(\bsell,2\bsa)$, we know $\bsx_{\bsa}+J_{\bsell}\subseteq \mathrm{EI}(\bsell-\bsone_{1{:}s},\bsa)$ and the set of cubes $\{\bsx_{\bsa}+J_{\bsell}\mid\bsa\in A\}$ are mutually disjoint, so there exists a partition $\mathcal{P}$ of $[0,1)^s$ into subcubes that includes $\{\bsx_{\bsa}+J_{\bsell}\mid\bsa\in A\}$ as a subset. Hence
$$\sum_{\bsa\in A}\mathrm{Vol}(J_\ell)\Bigl(\frac{\Delta(g,\bsx_{\bsa}+J_{\bsell})}{\mathrm{Vol}(J_\ell)^\lambda}\Bigr)^2
\leq \sum_{J\in \mathcal{P}}\mathrm{Vol}(J)\Bigl|\frac{\Delta(g,J)}{\mathrm{Vol}(J)^\lambda}\Bigr|^2
\leq \Bigl(V^{(s)}_\lambda(g)\Bigr)^2. $$
Taking the supremum over $\bsx_{\bsa}\in \mathrm{EI}(\bsell,2\bsa)$ gives
\begin{align*}
\sum_{\kappa^-_1,\dots,\kappa^-_s}|\hat{f}(\bsk)|^2
\leq 2^{-s}\Bigl(V^{(s)}_\lambda(g)\Bigr)^2\Bigl(\prod_{j=1}^s 4^{-\lambda \ell_j}\prod_{\ell'\in \kappa^+_j}4^{-\ell'-1}\Bigr).
\end{align*}
When $\alpha=0$, $\kappa^+_j$ is empty and we have proven our conclusion. When $\alpha>0$, $\kappa^+_j$ can be any $\alpha$ distinct numbers larger than $\ell_j$. After summing over all possible choice of $\kappa^+_j$, we get
\begin{equation}\label{eqn:alphasum}
  \sum_{\kappa^+_j}\prod_{\ell'\in \kappa^+_j}4^{-\ell'-1}\leq  \frac{1}{\alpha!}\Bigl(\sum_{\ell'=\ell_j+1}^\infty 4^{-\ell'-1}\Bigr)^\alpha=\frac{4^{-\alpha \ell_j}12^{-\alpha}}{\alpha!}  
\end{equation}
where the inequality holds because every permutation of $\kappa^+_j$ is contained in the $\alpha$-fold product of all natural numbers larger than $\ell_j$. Summing over all possible choice of $\kappa^+_1,\dots,\kappa^+_s$, we finally get
\begin{align*}
\sum_{\kappa^+_1,\dots,\kappa^+_s}\sum_{\kappa^-_1,\dots,\kappa^-_s}|\hat{f}(\bsk)|^2
&\leq 2^{-s}\Bigl(V^{(s)}_\lambda(g)\Bigr)^2\Bigl(\prod_{j=1}^s 4^{-\lambda \ell_j }\frac{4^{-\alpha \ell_j}12^{-\alpha}}{\alpha!}\Bigr)\\
&\leq \Bigl(V^{(s)}_\lambda(g)\Bigr)^2 4^{-(\alpha+\lambda)\Vert\bsell\Vert_1}.
\end{align*}
The proof is complete once we notice the sum over all possible $\kappa^+_1,\dots,\kappa^+_s$ and $\kappa^-_1,\dots,\kappa^-_s$ is precisely the sum of $\bsk$ over $B_{\alpha,\bsell,s}$.
\end{proof}

\begin{corollary}\label{cor:fkboundVlambda}
 Suppose $N_{\alpha,\lambda}(f)<\infty$ for $\alpha\in \natu_0$ and $0<\lambda\le 1$. Then for $\bsell\in \natu_*^s$ and $B_{\alpha,\bsell,s}=\{\bsk\in \natu_*^s\mid \lceil\bsk\rceil_{\alpha+1}=\bsell\}$
$$\Bigl(\sum_{\bsk\in B_{\alpha,\bsell,s}}|\hat{f}(\bsk)|^2\Bigr)^{1/2}\leq N_{\alpha,\lambda}(f) 2^{-(\alpha+\lambda)\Vert\bsell\Vert_1}.$$
\end{corollary}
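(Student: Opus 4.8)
\emph{Proof proposal.} The plan is to peel off the coordinates in which $\bsk$ carries at most $\alpha$ nonzero bits, integrate those coordinates out, and apply Theorem~\ref{thm:fkboundgealpha} in the remaining (lower-dimensional) variables, using the \emph{marginal} variation $V^u_\lambda$ to absorb the leftover one-dimensional factors. Fix $\bsell\in\natu_*^s$ and set $u=\{j\in 1{:}s\mid \ell_j>0\}\neq\emptyset$. For $\bsk\in B_{\alpha,\bsell,s}$ we have $\lceil\bsk\rceil_{\alpha+1}=\bsell$; since the bits of each $\kappa_j$ are positive integers, this forces $|\kappa_j|\ge\alpha+1$ with $(\alpha+1)$-st largest bit $\ell_j$ for $j\in u$, and $|\kappa_j|\le\alpha$ for $j\in u^c$. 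For such $\bsk$ I would invoke Lemma~\ref{lem:exactfk} with the multi-index $\bsalpha=\bsalpha(\bsk)$ that equals $\alpha$ on $u$ and $|\kappa_j|$ on $u^c$ (then $\bsalpha\in\ints_{\le\alpha}^s$, $f^{(\bsalpha)}\in C([0,1]^s)$, and $|\kappa_j|\ge\alpha_j$ for all $j$, as the lemma demands). Writing $\kappa^+_j=\lceil\kappa_j\rceil_{1{:}\alpha}$ and $\kappa^-_j=\kappa_j\setminus\lceil\kappa_j\rceil_{1{:}\alpha+1}\subseteq 1{:}\ell_j-1$ for $j\in u$, the factor in~\eqref{eqn:exactfk} is $W_{\kappa_j}(x_j)$ for $j\in u^c$ (the Walsh part is empty) and $(-1)^{\vec{x}_j(\ell_j)}\walkappa{\kappa^-_j}(x_j)W_{\kappa^+_j}(x_j)$ for $j\in u$. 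Since $\Vert W_\kappa\Vert_\infty\le1$ by~\eqref{eqn:Wmax}, $\rho_{u^c}(\bsx_{u^c}):=\prod_{j\in u^c}W_{\kappa_j}(x_j)\in D_{u^c}$, and because the $u$-factors depend only on $\bsx_u$, Fubini rewrites~\eqref{eqn:exactfk} as
\[
\hat{f}(\bsk)=\pm\int_{[0,1]^{|u|}}\tilde{g}(\bsx_u)\prod_{j\in u}(-1)^{\vec{x}_j(\ell_j)}\walkappa{\kappa^-_j}(x_j)W_{\kappa^+_j}(x_j)\rd\bsx_u,\qquad \tilde{g}:=I_u\big(f^{(\bsalpha)}\rho_{u^c}\big)\in C([0,1]^{|u|}).
\]

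Next, fix the data $(\kappa_j)_{j\in u^c}$, hence $\bsalpha$ and $\tilde{g}$. As $\bsk$ ranges over the members of $B_{\alpha,\bsell,s}$ carrying this data, $\big((\kappa^-_j)_{j\in u},(\kappa^+_j)_{j\in u}\big)$ ranges over exactly the index set handled in the proof of Theorem~\ref{thm:fkboundgealpha}, now in dimension $|u|$ with multi-index $\bsell_u=(\ell_j)_{j\in u}\in\natu^{|u|}$ (and $\Vert\bsell_u\Vert_1=\Vert\bsell\Vert_1$). That proof — which after setting $g=f^{(\alpha,\dots,\alpha)}$ uses only continuity of $g$, finiteness of its fractional Vitali variation, and the integral representation above — therefore shows directly that for any $\tilde{g}\in C([0,1]^{|u|})$ with $V^{(|u|)}_\lambda(\tilde{g})<\infty$,
\[
\sum_{(\kappa^\pm_j)_{j\in u}}|\hat{f}(\bsk)|^2\ \le\ \big(V^{(|u|)}_\lambda(\tilde{g})\big)^2\,4^{-(\alpha+\lambda)\Vert\bsell\Vert_1}
\]
(equivalently, apply Theorem~\ref{thm:fkboundgealpha} to any $h$ on $[0,1]^{|u|}$ with $h^{(\alpha,\dots,\alpha)}=\tilde{g}$). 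To bound the variation, put $\bar\rho_j=W_{\kappa_j}/\Vert W_{\kappa_j}\Vert_\infty$ (and $\bar\rho_j\equiv1$ if $\kappa_j=\emptyset$); each $\bar\rho_j$ is continuous, nonnegative, with sup-norm $1$, so $\prod_{j\in u^c}\bar\rho_j\in D_{u^c}$, and by homogeneity of $I_u$ and $V^{(|u|)}_\lambda$ together with~\eqref{eqn:fractionalvariation},
\[
V^{(|u|)}_\lambda(\tilde{g})=\Big(\prod_{j\in u^c}\Vert W_{\kappa_j}\Vert_\infty\Big)V^{(|u|)}_\lambda\Big(I_u\big(f^{(\bsalpha)}\textstyle\prod_{j\in u^c}\bar\rho_j\big)\Big)\ \le\ \Big(\prod_{j\in u^c}\Vert W_{\kappa_j}\Vert_\infty\Big)V^{u}_\lambda(f^{(\bsalpha)}),
\]
which is in particular finite, justifying the previous display.

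It remains to sum over $(\kappa_j)_{j\in u^c}$. Grouping by $\beta_j:=|\kappa_j|\in\ints_{\le\alpha}$ (which determines $\bsalpha$) and using~\eqref{eqn:Wmax} with the same estimate as in~\eqref{eqn:alphasum} (now with no lower bound on the bits) to get $\sum_{|\kappa|=\beta}\Vert W_\kappa\Vert_\infty^2\le1$ for every $\beta\in\natu_0$ (it equals $1$ for $\beta=0$ and is $\le4\cdot12^{-\beta}/\beta!$ for $\beta\ge1$), one obtains
\[
\sum_{\bsk\in B_{\alpha,\bsell,s}}|\hat{f}(\bsk)|^2\ \le\ 4^{-(\alpha+\lambda)\Vert\bsell\Vert_1}\sum_{(\beta_j)_{j\in u^c}}\big(V^u_\lambda(f^{(\bsalpha)})\big)^2\prod_{j\in u^c}\Big(\sum_{|\kappa|=\beta_j}\Vert W_\kappa\Vert_\infty^2\Big).
\]
Since $\sum_{|\kappa|=\beta_j}\Vert W_\kappa\Vert_\infty^2\le1$ and distinct tuples $(\beta_j)_{j\in u^c}$ yield distinct $\bsalpha\in\ints_{\le\alpha}^s$ (all summands nonnegative), the right side is at most $4^{-(\alpha+\lambda)\Vert\bsell\Vert_1}\sum_{\bsalpha\in\ints_{\le\alpha}^s}(V^u_\lambda(f^{(\bsalpha)}))^2\le4^{-(\alpha+\lambda)\Vert\bsell\Vert_1}N_{\alpha,\lambda}(f)^2$ (using $u\neq\emptyset$). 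Taking square roots gives the claim; when $u=1{:}s$ the argument collapses to Theorem~\ref{thm:fkboundgealpha} together with $V^{(s)}_\lambda(f^{(\alpha,\dots,\alpha)})\le N_{\alpha,\lambda}(f)$.

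The main obstacle is entirely in the coordinates $j\in u^c$, where $\bsk$ has at most $\alpha$ nonzero bits and the elementary-interval structure exploited in Theorem~\ref{thm:fkboundgealpha} is unavailable. Two points need care: (i) the surviving factors $W_{\kappa_j}$ must be recognized as admissible test densities for the \emph{marginal} variation $V^u_\lambda$ — this is precisely why $N_{\alpha,\lambda}$ is built from marginal variations and not from $V^{(s)}_\lambda$ alone; and (ii) the sum over the infinitely many choices of each $\kappa_j$, $j\in u^c$, must converge, which is forced by the geometric decay $\Vert W_\kappa\Vert_\infty=2\prod_{\ell\in\kappa}2^{-\ell-1}$ and the ensuing bound $\sum_{|\kappa|=\beta}\Vert W_\kappa\Vert_\infty^2\le1$. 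Everything else is a lower-dimensional invocation of Theorem~\ref{thm:fkboundgealpha} plus bookkeeping.
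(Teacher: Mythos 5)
Your proof is correct and follows essentially the same route as the paper's: split off $u=\{j:\ell_j>0\}$, apply Lemma~\ref{lem:exactfk} with $\bsalpha$ equal to $\alpha$ on $u$ and $|\kappa_j|$ on $u^c$, fold the $u^c$-factors $W_{\kappa_j}$ into a test density $\rho_{u^c}\in D_{u^c}$ so that the proof of Theorem~\ref{thm:fkboundgealpha} applies to $g_{\kappa_{u^c}}=I_u(f^{(\bsalpha)}\rho_{u^c})$ in dimension $|u|$, bound $V^{(|u|)}_\lambda$ by $\prod_{j\in u^c}\Vert W_{\kappa_j}\Vert_\infty\cdot V^u_\lambda(f^{(\bsalpha)})$, and then sum over $\kappa_{u^c}$ using the geometric decay of $\Vert W_\kappa\Vert_\infty$. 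The only (cosmetic) difference is that you retain $\sum_{|\kappa|=\beta}\Vert W_\kappa\Vert_\infty^2\le 4\cdot12^{-\beta}/\beta!\le1$ directly, while the paper first relaxes $\Vert W_\kappa\Vert_\infty$ to $\prod_{\ell\in\kappa}2^{-\ell}$; both give the same conclusion.
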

\begin{proof}
We have proven the result when $\bsell\in \natu^s$, so we assume $u=\{j\in 1{:}s\mid \ell_j>0\}$ is a proper subset of $1{:}s$. $u$ is also nonempty since $\bsell\neq \bszero$.

For $\bsk\in B_{\alpha,\bsell,s}$, we let $\bsalpha=(\alpha_1,\dots,\alpha_s)$ with $\alpha_j=\alpha$ if $j\in u$ and $\alpha_j=|\kappa_j|$ if $j\notin u$. For simplicity, we will write $\kappa_{u^c}$ as a shorthand for $\kappa_j,j\in u^c$. Equation~\eqref{eqn:exactfk} with the above choice of $\bsalpha$ gives
\begin{equation*}
    |\hat{f}(\bsk)|^2=\Bigl|\int_{[0,1)^{|u|}}g_{\kappa_{u^c}}(\bsx_{u})\prod_{j\in u}\walkappa{\kappa_j\setminus \lceil\kappa_j\rceil_{1{:}\alpha}}(x_j)W_{\lceil\kappa_j\rceil_{1{:}\alpha}}(x_j)\rd \bsx_{u}\Bigr|^2
\end{equation*}
where 
\begin{equation*}
    g_{\kappa_{u^c}}(\bsx_{u})=I_u\Big(f^{(\bsalpha)}\prod_{j\in u^c}W_{\kappa_j}\Big).
\end{equation*}

Let
\begin{equation*}
B'_{\kappa_{u^c}}=\{\bsk'\in B_{\alpha,\bsell,s}\mid \kappa'_j=\kappa_j \text{ for } j\in u^c\}.
\end{equation*}
Because every $\bsk'\in  B'_{\kappa_{u^c}}$ defines the same $g_{\kappa_{u^c}}$, by the proof of Theorem~\ref{thm:fkboundgealpha} with $g_{\kappa_{u^c}}$ in place of $g$
\begin{equation}\label{eqn:sumfk'}
    \sum_{\bsk'\in  B'_{\kappa_{u^c}}}|\hat{f}(\bsk')|^2\leq \Bigl(V^{(|u|)}_\lambda(g_{\kappa_{u^c}})\Bigr)^2 4^{-(\alpha+\lambda)\Vert\bsell\Vert_1}
\end{equation}
where we have used the fact $\Vert\bsell_u\Vert_1=\Vert\bsell\Vert_1$ because $\ell_j=0$ if $j\notin u$.

 Because $W_{\kappa_j}$ is continuous and nonnegative, we let $\rho_j=W_{\kappa_j}/\Vert W_{\kappa_j}\Vert_\infty$ and use the definition of $ V^{u}_\lambda$ as in equation~\eqref{eqn:fractionalvariation} to get
 \begin{align*}
    V^{(|u|)}_\lambda(g_{\kappa_{u^c}})&=\Big(\prod_{j\in u^c}\Vert W_{\kappa_j}\Vert_\infty\Big)V^{(|u|)}_\lambda\Big(I_u(f^{(\bsalpha)}(\bsx)\prod_{j\in u^c}\rho_j(x_j))\Big)\\
    &\leq \Big(\prod_{j\in u^c}\Vert W_{\kappa_j}\Vert_\infty\Big)V^{u}_\lambda(f^{(\bsalpha)}).
\end{align*}
    By equation~\eqref{eqn:Wmax}, if $\kappa_j\neq \emptyset$
$$\Vert W_{\kappa_j}\Vert_\infty= 2\prod_{\ell'\in \kappa_j}2^{-\ell'-1}\leq \prod_{\ell'\in \kappa_j}2^{-\ell'}$$
and $\Vert W_{\kappa_j}\Vert_\infty=1$ if  $\kappa_j= \emptyset$.

Summing equation~\eqref{eqn:sumfk'} over all possible choice of $\kappa_{u^c}$ such that $|\kappa_j|=\alpha_j, j\in u^c$, we get
\begin{align*}
    \sum_{\kappa_{u^c}: |\kappa_j|=\alpha_j}\sum_{\bsk'\in  B'_{\kappa_{u^c}}}|\hat{f}(\bsk')|^2
    &\leq 4^{-(\alpha+\lambda)\Vert\bsell\Vert_1}\sum_{\kappa_{u^c}: |\kappa_j|=\alpha_j}\Bigl(V^{(|u|)}_\lambda(g_{\kappa_{u^c}})\Bigr)^2 \\
    & \leq \Big(V^u_\lambda(f^{(\bsalpha)})\Bigr)^2 4^{-(\alpha+\lambda)\Vert\bsell\Vert_1}\sum_{\kappa_{u^c}: |\kappa_j|=\alpha_j}\prod_{j\in u^c:\kappa_j\neq \emptyset}\prod_{\ell'\in \kappa_j}4^{-\ell'}\\
    & \leq\Big(V^u_\lambda(f^{(\bsalpha)})\Bigr)^2 4^{-(\alpha+\lambda)\Vert\bsell\Vert_1} \prod_{j\in u^c:\kappa_j\neq \emptyset}\frac{1}{\alpha_j!}\Bigl(\sum_{\ell'=1}^\infty 4^{-\ell'}\Bigr)^{\alpha_j}\\
    & \leq\Big(V^u_\lambda(f^{(\bsalpha)})\Bigr)^2 4^{-(\alpha+\lambda)\Vert\bsell\Vert_1} .
\end{align*}
Finally, we sum over all choice of $\alpha_j\in \ints_{\le\alpha}, j\in u^c$ and get 
\begin{align}\label{eqn:corNalambda}
    \sum_{\kappa_{u^c}:|\kappa_j|\leq \alpha}\sum_{\bsk'\in  B'_{\kappa_{u^c}}}|\hat{f}(\bsk')|^2
    &\leq \sum_{\alpha_j\in \ints_{\le\alpha}, j\in u^c}\Big(V^u_\lambda(f^{(\bsalpha)})\Bigr)^2 4^{-(\alpha+\lambda)\Vert\bsell\Vert_1}\nonumber\\
    &\leq \Bigl(N_{\alpha,\lambda}(f)\Bigr)^2 4^{-(\alpha+\lambda)\Vert\bsell\Vert_1}.
\end{align}
The proof is complete because $B_{\alpha,\bsell,s}$ is the union of $B'_{\kappa_{u^c}}$ over all choice of $\kappa_{u^c}$ such that $|\kappa_j|\leq \alpha, j\in u^c$.
\end{proof}

Motivated by Lemma~\ref{lem:fkboundlealpha} and Theorem~\ref{thm:fkboundgealpha}, we define event $\mathcal{A}$ to be $\{Z(\bsk)=1$ for some $\bsk\in K\}$ with
$$K=\{\bsk\in \natu_*^s\mid \Vert\lceil\bsk\rceil_{\alpha+1}\Vert_1\leq m-(\alpha+1) s\log_2(m),\max\lceil\kappa_j\rceil\leq (\alpha+\lambda+1)m\}.$$

\begin{lemma}\label{lem:smallpVlambda}
    With $\mathcal{A}$ defined as above, we have $\Pr(\mathcal{A})=O(1/m)$.
\end{lemma}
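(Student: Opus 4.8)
The plan is to mimic the structure of the proof of Lemma~\ref{lem:smallpVa}: by Lemma~\ref{lem:PrZkbound} and a union bound, $\Pr(\mathcal{A}) \le \sum_{\bsk\in K}\Pr(Z(\bsk)=1) \le 2^{-m+R}|K|$, so it suffices to show $|K| = O(2^m/m)$. The set $K$ is now cut out by two constraints: $\Vert\lceil\bsk\rceil_{\alpha+1}\Vert_1 \le m - (\alpha+1)s\log_2(m)$ and $\max_j\lceil\kappa_j\rceil_1 \le (\alpha+\lambda+1)m$. The second constraint is what makes $K$ finite, so it cannot be dropped, but I expect the first constraint to do the real work in the counting.

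The key step is to count, for a fixed value $\bsell = \lceil\bsk\rceil_{\alpha+1} \in \natu_0^s$, how many $\bsk\in \natu_*^s$ have $\lceil\bsk\rceil_{\alpha+1} = \bsell$ and additionally $\max_j \lceil\kappa_j\rceil_1 \le L$ where $L = \lfloor(\alpha+\lambda+1)m\rfloor$. Fixing the value $\ell_j$ of the $(\alpha+1)$'st largest bit of $\kappa_j$ forces: the $\alpha$ larger bits $\kappa_j^+ := \lceil\kappa_j\rceil_{1{:}\alpha}$ must be $\alpha$ distinct integers in the range $(\ell_j, L]$ (at most $\binom{L}{\alpha} \le L^\alpha$ choices when $\ell_j > 0$, and when $\ell_j = 0$ there are fewer than $\alpha$ bits above, so $|\kappa_j| \le \alpha$ and the count is at most $2^{L}$ — but this is the degenerate branch I'll handle separately), while the bits strictly below $\ell_j$ form an arbitrary subset of $\{1,\dots,\ell_j-1\}$, giving at most $2^{\ell_j - 1} \le 2^{\ell_j}$ choices. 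So for $\bsell\in\natu^s$ the number of such $\bsk$ is at most $L^{\alpha s} 2^{\Vert\bsell\Vert_1}$. Summing over $\bsell$ with $\Vert\bsell\Vert_1 \le m - (\alpha+1)s\log_2 m$, using equation~\eqref{eqn:setpartition} and Lemma~\ref{lem:geometricsum} with $\rho = 2$ exactly as in Lemma~\ref{lem:smallpVa}, gives a bound of order
$$L^{\alpha s}\,\binom{m - (\alpha+1)s\log_2 m + s - 1}{s-1}\, 2^{\,m - (\alpha+1)s\log_2 m + 1} = O\!\big(m^{\alpha s}\cdot m^{s-1}\cdot 2^m\cdot m^{-(\alpha+1)s}\big) = O(2^m/m),$$
since the powers of $m$ collected are $\alpha s + (s-1) - (\alpha+1)s = -1$.

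The one subtlety — the main obstacle, such as it is — is bookkeeping for the degenerate coordinates where $\ell_j = 0$, i.e.\ $\kappa_j$ has fewer than $\alpha+1$ nonzero bits. There $\lceil\kappa_j\rceil_{\alpha+1} = 0$ by convention, $\kappa_j$ is an arbitrary subset of $\{1,\dots,L\}$ of size $\le \alpha$, so there are at most $\sum_{i=0}^{\alpha}\binom{L}{i} \le (\alpha+1)L^\alpha$ such $\kappa_j$, which is still only polynomial in $m$ and is absorbed into the $L^{\alpha s}$ factor (replacing it by $((\alpha+1)L)^{\alpha s}$, say); the contribution $2^{\ell_j} = 2^0 = 1$ from "bits below $\ell_j$" is vacuously fine. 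Once every coordinate is bounded by a polynomial-in-$m$ times $2^{\ell_j}$ and we sum the product over $\bsell$ in the prescribed simplex, the same geometric-sum estimate closes the argument. With $|K| = O(2^m/m)$ in hand, $\Pr(\mathcal{A}) \le 2^{-m+R}|K| = O(1/m)$ follows immediately.
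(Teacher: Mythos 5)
Your proposal is correct and follows essentially the same route as the paper's proof: bound $\Pr(\mathcal A)$ by $2^{-m+R}|K|$ via the union bound and Lemma~\ref{lem:PrZkbound}, stratify $K$ by $\bsell=\lceil\bsk\rceil_{\alpha+1}$, bound the choices of the $\alpha$ top bits by a polynomial in $m$ and the low bits by $2^{\ell_j}$, and close with equation~\eqref{eqn:setpartition} and Lemma~\ref{lem:geometricsum}. Your explicit bookkeeping of the degenerate $\ell_j=0$ coordinates (bounding the count by $(\alpha+1)L^\alpha$ and absorbing it into the constant) is slightly more careful than the paper's, which subsumes it into the same $T^\alpha\cdot 2^{\ell_j}$ bound without comment, but the substance is identical.
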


\begin{proof} As in the proof of Lemma~\ref{lem:smallpVa}, we only need to show $|K|=O(2^m/m)$. To facilitate the proof, we define
\begin{equation}\label{eqn:KT}
   K_T=\{\bsk\in \natu_*^s\mid \Vert\lceil\bsk\rceil_{\alpha+1}\Vert_1\leq m-(\alpha+1) s\log_2(m),\max\lceil\kappa_j\rceil\leq T\}. 
\end{equation}
Consider $B_{\alpha,\bsell,s}=\{\bsk\in \natu_*^s\mid \lceil\bsk\rceil_{\alpha+1}=\bsell\}$ with $\Vert\bsell\Vert_1\leq m-(\alpha+1) s\log_2(m)$. $\lceil\kappa_j\rceil_{1{:}\alpha}$ can be any $\alpha$ distinct numbers larger than $\ell_j$, so there are less than $T^\alpha$ choices given $\max\lceil\kappa_j\rceil\leq T$. $\kappa_j\setminus\lceil\kappa_j\rceil_{1{:}\alpha+1}$ can be any subset of $1:\ell_j-1$ if $\ell_j\geq 2$ and is empty if $\ell_j=0$ or $1$, so there are at most $2^{\ell_j}$ choices. Hence
    \begin{align*}
        |K_T|&<\sum_{\substack{\bsell\in \natu_0^{ s}\\\Vert\bsell\Vert_1\leq m-(\alpha+1) s\log_2(m)}}\prod_{j=1}^s T^\alpha 2^{\ell_j}\\
        &=T^{\alpha s} \sum_{\substack{\bsell\in \natu_0^{ s}\\\Vert\bsell\Vert_1\leq m-(\alpha+1) s\log_2(m)}} 2^{\Vert\bsell\Vert_1}.
    \end{align*}
    Because 
    $$ |\{\bsell\in \natu_0^{s}\mid \Vert\bsell\Vert_1=N\}|={N+s-1\choose s-1},$$
    we can apply Lemma~\ref{lem:geometricsum} and get
    \begin{align}\label{eqn:Tbound}
        |K_T| & <T^{\alpha s} \sum_{N=0}^{\lfloor m-(\alpha+1) s \log_2(m)\rfloor}{N+s-1\choose s-1} 2^{N}\nonumber\\
        & \leq T^{\alpha s} {m-(\alpha+1) s \log_2(m)+s-1\choose s-1} 2^{m-(\alpha+1) s \log_2(m)+1}\nonumber\\
        &\leq T^{\alpha s} \frac{(m+s-1)^{s-1}}{(s-1)!} \frac{2^{m+1}}{m^{(\alpha+1) s }},
    \end{align}
which implies $|K|=O(2^m/m)$ once we let $T=(\alpha+\lambda+1)m$.
 
\end{proof}

\begin{theorem}\label{thm:rateVlambda}
     Suppose  $N_{\alpha,\lambda}(f)<\infty$ for $\alpha\in \natu_0$ and $0<\lambda\le 1$. When $r\geq m$, we have $\e((\hat{\mu}^{(r)}_{\infty}-\mu)^2)=O(n^{-2\alpha-2\lambda-1+\epsilon})$ for any $\epsilon>0$.
\end{theorem}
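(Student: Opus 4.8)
The plan is to run the same argument as in the proof of Theorem~\ref{thm:rateVa}, with $\mathcal{A}$ and $K$ the event and set defined just above Lemma~\ref{lem:smallpVlambda}. By Lemma~\ref{lem:smallpVlambda} we may apply Lemma~\ref{lem:medianMSE} with $\delta=C/m$ for a large enough constant $C$. Since $r\ge m$ and $8\delta<1$ for large $m$, the term $(8\delta)^r\Delta_m^2\le(8C/m)^m(2\Vert f\Vert_\infty)^2$ (finite, since $N_{\alpha,\lambda}(f)<\infty$ presupposes $f\in C([0,1]^s)$) decays faster than any power of $n=2^m$ and is negligible. Using asymptotic full-rankness (Lemma~\ref{lem:PrZkbound}) and the bound on ${\Pr(\ca^c)}\var(\hat\mu_\infty\giv\ca^c)$ from Subsection~\ref{subsec:mse}, everything reduces to showing
$$\delta^{-1}\,2^{-m+R}\sum_{\bsk\in\natu_*^s\setminus K}|\hat{f}(\bsk)|^2=O\bigl(n^{-2\alpha-2\lambda-1+\epsilon}\bigr)\qquad\text{for every }\epsilon>0,$$
where the factor $\delta^{-1}=O(m)$ is absorbed into $n^{\epsilon}$.

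I would split $\natu_*^s\setminus K=A_1\cup A_2$, where $A_1=\{\bsk\in\natu_*^s\mid\Vert\lceil\bsk\rceil_{\alpha+1}\Vert_1>m-(\alpha+1)s\log_2 m\}$ and $A_2=\{\bsk\in\natu_*^s\mid\max\lceil\kappa_j\rceil>(\alpha+\lambda+1)m\}$, and bound the two parts separately. For $A_1$ I would partition the sum by the value $\bsell=\lceil\bsk\rceil_{\alpha+1}\in\natu_*^s$, apply Corollary~\ref{cor:fkboundVlambda} to get $\sum_{\bsk\in B_{\alpha,\bsell,s}}|\hat f(\bsk)|^2\le N_{\alpha,\lambda}(f)^2\,4^{-(\alpha+\lambda)\Vert\bsell\Vert_1}$, and then sum over $\bsell$ with $\Vert\bsell\Vert_1>m-(\alpha+1)s\log_2 m$ exactly as in equations~\eqref{eqn:setpartition}--\eqref{eqn:epilsonrate} of Theorem~\ref{thm:rateVa} (now with $\alpha$ replaced by $\alpha+\lambda$ and $\alpha s$ by $s$), using $\binom{N+s-1}{s-1}\le 4^{\epsilon N}$ for large $N$ and $\log_2 m=o(m)$. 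This yields $\sum_{\bsk\in A_1}|\hat f(\bsk)|^2=O(n^{-2\alpha-2\lambda+\epsilon})$, hence the $A_1$ contribution is $O(m\,n^{-1}\cdot n^{-2\alpha-2\lambda+\epsilon})=O(n^{-2\alpha-2\lambda-1+\epsilon})$.

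The genuinely new step is the bound on $\sum_{\bsk\in A_2}|\hat f(\bsk)|^2$. If $\alpha=0$ then $\Vert\lceil\bsk\rceil_1\Vert_1\ge\max\lceil\kappa_j\rceil>(\lambda+1)m>m-s\log_2 m$ for large $m$, so $A_2\subseteq A_1$ and nothing further is needed. If $\alpha\ge1$, then $N_{\alpha,\lambda}(f)<\infty$ forces $f\in C^{(1,\dots,1)}([0,1]^s)$; I would union-bound over the coordinate $j^\ast$ attaining $\max\lceil\kappa_j\rceil$, and for fixed $j^\ast$ apply Lemma~\ref{lem:exactfk} with $\bsalpha$ the $j^\ast$-th standard unit vector (legitimate since $\lceil\kappa_{j^\ast}\rceil_1>(\alpha+\lambda+1)m$ implies $|\kappa_{j^\ast}|\ge1$). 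Writing $t=\lceil\kappa_{j^\ast}\rceil_1$, this expresses $\hat f(\bsk)$ as an $L^2([0,1]^s)$ inner product of $f^{(\bsalpha)}(\bsx)\,W_{\{t\}}(x_{j^\ast})$ against a Walsh function that ranges over an orthonormal family as $\kappa_{j^\ast}\setminus\{t\}$ runs over all subsets of $1{:}t-1$ and the remaining coordinates of $\bsk$ run over $\natu_0^{s-1}$; Bessel's inequality together with $0\le W_{\{t\}}\le 2^{-t}$ (equation~\eqref{eqn:Wmax}) then gives
$$\sum_{\bsk\,:\,\lceil\kappa_{j^\ast}\rceil_1=t}|\hat f(\bsk)|^2\le\bigl\Vert f^{(\bsalpha)}(\,\cdot\,)\,W_{\{t\}}(x_{j^\ast})\bigr\Vert_{L^2([0,1]^s)}^2\le 4^{-t}\bigl\Vert f^{(\bsalpha)}\bigr\Vert_{L^2([0,1]^s)}^2.$$
Summing over $t>(\alpha+\lambda+1)m$ and over the $s$ choices of $j^\ast$ gives $\sum_{\bsk\in A_2}|\hat f(\bsk)|^2=O\bigl(4^{-(\alpha+\lambda+1)m}\bigr)=O\bigl(n^{-2\alpha-2\lambda-2}\bigr)$, so the $A_2$ contribution is $O(m\,n^{-1}\cdot n^{-2\alpha-2\lambda-2})=O(n^{-2\alpha-2\lambda-3+\epsilon})$, negligible against the $A_1$ part. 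Adding the two and letting $\epsilon$ be arbitrarily small completes the proof.

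I expect the $A_2$ estimate to be the main obstacle. One cannot bound $|\hat f(\bsk)|$ pointwise in the style of Lemma~\ref{lem:fkboundlealpha} and then sum, because $A_2$ contains infinitely many $\bsk$ whose low-order bits in the coordinates $j\ne j^\ast$ are completely unconstrained; the trick is to integrate by parts only once, in the coordinate $j^\ast$ carrying the very large bit (which extracts the decisive factor $4^{-t}$ through $W_{\{t\}}$), while retaining $L^2$/Parseval orthogonality in all the remaining coordinates. It is also worth noting that the cutoff $\max\lceil\kappa_j\rceil\le(\alpha+\lambda+1)m$ in the definition of $K$ is calibrated precisely so that this tail sum is of strictly smaller order than the target rate while $|K|$ still stays $O(2^m/m)$, as required by Lemma~\ref{lem:smallpVlambda}.
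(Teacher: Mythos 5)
Your proof is correct, and it follows the paper's overall scheme: the same set $K$, the same event $\ca$, the same application of Lemma~\ref{lem:smallpVlambda} and Lemma~\ref{lem:medianMSE} with $\delta=C/m$, and an identical treatment via Corollary~\ref{cor:fkboundVlambda} of the piece where $\Vert\lceil\bsk\rceil_{\alpha+1}\Vert_1$ is large. Where you genuinely diverge is the high-frequency tail $A_2$ with $\max\lceil\kappa_j\rceil$ large. The paper keeps a disjoint decomposition (the second sum in line~\eqref{eqn:secondsum} retains the extra constraint $\Vert\lceil\bsk\rceil_{\alpha+1}\Vert_1\le m-(\alpha+1)s\log_2 m$), applies the pointwise bound of Lemma~\ref{lem:fkboundlealpha}, namely $|\hat f(\bsk)|^2\le C_f^2\,4^{-\max\lceil\kappa_j\rceil_1}$, and then pays for the size of the index set through the explicit cardinality estimate on $|K_T\setminus K_{T-1}|$ from equation~\eqref{eqn:Tbound}, recycled from Lemma~\ref{lem:smallpVlambda}. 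You instead integrate by parts once in the coordinate $j^*$ carrying the largest bit $t$ (Lemma~\ref{lem:exactfk} with a unit $\bsalpha$, legitimate for $\alpha\ge1$ since $N_{\alpha,\lambda}(f)<\infty$ presupposes $f\in C^{(\alpha,\dots,\alpha)}([0,1]^s)$) and invoke Bessel's inequality: the residual Walsh factors run through a distinct subfamily of the orthonormal basis as $\kappa_{j^*}\setminus\{t\}$ and the other coordinates vary, so $\sum_{\bsk:\lceil\kappa_{j^*}\rceil_1=t}|\hat f(\bsk)|^2\le\bigl\Vert(\partial f/\partial x_{j^*})\,W_{\{t\}}\bigr\Vert^2_{L^2}\le 4^{-t}\Vert\partial f/\partial x_{j^*}\Vert^2_{L^2([0,1]^s)}$ with no combinatorial counting. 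This is arguably cleaner: it decouples the two defining constraints of $K$, so the non-disjoint cover $A_1\cup A_2$ suffices, and it yields the slightly stronger tail estimate $\sum_{A_2}|\hat f(\bsk)|^2=O\bigl(4^{-(\alpha+\lambda+1)m}\bigr)$ versus the paper's $o\bigl(2^{-(2\alpha+2\lambda)m}\bigr)$; both are of strictly smaller order than the $A_1$ piece, so the two proofs close the same way, and you correctly dispatch $\alpha=0$ by noting $A_2\subseteq A_1$ for large $m$.
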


\begin{proof}
    As in the proof of Theorem~\ref{thm:rateVa}, it suffices to show
    $$\sum_{\bsk\in\natu_*^s\setminus K }\Pr(Z(\bsk)=1)|\hat{f}(\bsk)|^2=O(n^{-2\alpha-2\lambda-1+\epsilon}).$$
    Lemma~\ref{lem:PrZkbound} and the definition of $K$ imply
    \begin{align}
    &\sum_{\bsk\in\natu_*^s\setminus K }\Pr(Z(\bsk)=1)|\hat{f}(\bsk)|^2 \nonumber\\
    \leq & 2^{-m+R}\sum_{\bsk\in\natu_*^s\setminus K} |\hat{f}(\bsk)|^2\nonumber\\
    \leq &  2^{-m+R}\sum_{\substack{\bsk\in\natu^s_*\\\Vert\lceil\bsk\rceil_{\alpha+1}\Vert_1> m-(\alpha+1) s\log_2(m)}}|\hat{f}(\bsk)|^2\label{eqn:firstsum}\\ 
    + &  2^{-m+R}\sum_{\substack{\bsk\in\natu^s_*\\\Vert\lceil\bsk\rceil_{\alpha+1}\Vert_1\leq m-(\alpha+1) s\log_2(m)\\ \max\lceil\kappa_j\rceil> (\alpha+\lambda+1)m}}|\hat{f}(\bsk)|^2.\label{eqn:secondsum}
\end{align}
We apply Corollary~\ref{cor:fkboundVlambda} to the sum in line~\eqref{eqn:firstsum} and get
\begin{align*}
    &\sum_{\substack{\bsk\in\natu^s_*\\\Vert\lceil\bsk\rceil_{\alpha+1}\Vert_1> m-(\alpha+1) s\log_2(m)}}|\hat{f}(\bsk)|^2 \\
     = &\sum_{\substack{\bsell\in \natu_0^{s}\\\Vert\bsell\Vert_1> m-(\alpha+1) s \log_2(m)}} \sum_{\bsk\in B_{\alpha,\bsell,s}}|\hat{f}(\bsk)|^2\\
    \leq & \sum_{\substack{\bsell\in \natu_0^{s}\\\Vert\bsell\Vert_1> m-(\alpha+1) s \log_2(m)}} \Bigl(N_{\alpha,\lambda}(f)\Bigr)^2 4^{-(\alpha+\lambda)\Vert\bsell\Vert_1}
\end{align*}
By a similar argument used in equation~\eqref{eqn:epilsonrate}, we have 
$$\sum_{\substack{\bsell\in \natu_0^{s}\\\Vert\bsell\Vert_1> m-(\alpha+1) s \log_2(m)}} 4^{-(\alpha+\lambda)\Vert\bsell\Vert_1}=O(2^{-(2\alpha+2\lambda-\epsilon)m}).$$
Therefore when $N_{\alpha,\lambda}(f)<\infty$
$$\sum_{\substack{\bsk\in\natu^s_*\\\Vert\lceil\bsk\rceil_{\alpha+1}\Vert_1> m-(\alpha+1) s\log_2(m)}}|\hat{f}(\bsk)|^2=O(2^{-(2\alpha+2\lambda-\epsilon)m}).$$
Next we notice the sum in line~\eqref{eqn:secondsum} is empty if $\alpha=0$ because $\Vert\lceil\bsk\rceil_{1}\Vert_1\geq \max\lceil\kappa_j\rceil$. When $\alpha\geq 1$, we apply Lemma~\ref{lem:fkboundlealpha} and get
\begin{align*}
    \sum_{\substack{\bsk\in\natu^s_*\\\Vert\lceil\bsk\rceil_{\alpha+1}\Vert_1\leq m-(\alpha+1) s\log_2(m)\\ \max\lceil\kappa_j\rceil> (\alpha+\lambda+1)m}}|\hat{f}(\bsk)|^2 \leq C_f^2\sum_{\substack{\bsk\in\natu^s_*\\\Vert\lceil\bsk\rceil_{\alpha+1}\Vert_1\leq m-(\alpha+1) s\log_2(m)\\ \max\lceil\kappa_j\rceil> (\alpha+\lambda+1)m}} 4^{-\max \lceil \kappa_j\rceil_1}. 
\end{align*}
With $K_T$ defined as in equation~\eqref{eqn:KT}, we can apply the bound from equation~\eqref{eqn:Tbound} and get
    \begin{align*}       &\sum_{\substack{\bsk\in\natu^s_*\\
    \Vert\lceil\bsk\rceil_{\alpha+1}\Vert_1\leq m-(\alpha+1) s\log_2(m)\\ \max\lceil\kappa_j\rceil> (\alpha+\lambda+1)m}} 4^{-\max \lceil \kappa_j\rceil_1} \\
    = &\sum_{T=\lfloor(\alpha+\lambda+1)m\rfloor+1}^\infty |K_T\setminus K_{T-1}|4^{-T}\\
        \leq &\frac{(m+s-1)^{s-1}}{(s-1)!} \frac{2^{m+1}}{m^{(\alpha+1) s }} \sum_{T=\lfloor(\alpha+\lambda+1)m\rfloor+1}^\infty T^{\alpha s} 4^{-T}.
    \end{align*}
    Because $T^{\alpha s}=O(4^{\delta T})$ for any $\delta>0$,
    $$ \sum_{T=\lfloor(\alpha+\lambda+1)m\rfloor+1}^\infty T^{\alpha s} 4^{-T}=O(\sum_{T=\lfloor(\alpha+\lambda+1)m\rfloor+1}^\infty 4^{-(1-\delta)T})=O(2^{-2(1-\delta)(\alpha+\lambda+1)m}).$$
    Hence by choosing a small enough $\delta$,
    \begin{align*}
         \sum_{\substack{\bsk\in\natu^s_*\\\Vert\lceil\bsk\rceil_{\alpha+1}\Vert_1\leq m-(\alpha+1) s\log_2(m)\\ \max\lceil\kappa_j\rceil> (\alpha+\lambda+1)m}}|\hat{f}(\bsk)|^2 =O(2^m2^{-2(1-\delta)(\alpha+\lambda+1)m})=o(2^{-(2\alpha+2\lambda)m})
    \end{align*}
    Finally putting together the bounds for sums in line~\eqref{eqn:firstsum} and \eqref{eqn:secondsum}, we conclude
    \begin{align*}
       \sum_{\bsk\in\natu_*^s\setminus K }\Pr(Z(\bsk)=1)|\hat{f}(\bsk)|^2
        &\leq  2^{-m+R}O(2^{-(2\alpha+2\lambda-\epsilon)m})=O(n^{-2\alpha-2\lambda-1+\epsilon})
    \end{align*}
    since $n=2^m$.
\end{proof}

\begin{remark}\label{rmk:rateVlambda}
  Theorem~\ref{thm:rateVlambda} can be generalized to integrands with ``weak derivatives" $f^{(\bsalpha)}\in L^1([0,1]^s)$ that fulfill equation~\eqref{eqn:exactfk}. In particular, $f^{(\bsalpha)}\in C([0,1]^s)$ is assumed in Lemma~\ref{lem:exactfk} only to validate integration by part and derive equation~\eqref{eqn:exactfk}, and Lemma~\ref{lem:fkboundlealpha} and Theorem~\ref{thm:fkboundgealpha} are both direct consequences of equation~\eqref{eqn:exactfk}. A noticeable case where such generalization can always be done is when $\bsalpha=\bszero$, for which equation~\eqref{eqn:exactfk} holds with $f^{(\bsalpha)}=f$ because this is the definition of $\hat{f}(\bsk)$. Subsection~\ref{subsec:discont} gives a one-dimensional example where equation~\eqref{eqn:exactfk} holds despite a point of discontinuity in the derivative, and we see there the $O(n^{-\alpha-\lambda-1/2+\epsilon})$ RMSE convergence rate holds despite $f^{(\alpha)}\notin C([0,1])$. 
\end{remark}

\subsection{When $f$ is in the Sobolev space of order $\alpha+\lambda$}\label{subsec:Hp}

Sobolev spaces, especially reproducing kernel Hilbert spaces, have been used extensively in the study of QMC \cite{dick:pill:2010}. In this subsection, we will show the RMSE of $\hat{\mu}^{(r)}_\infty$ has a $O(n^{-\alpha-\lambda-1/2+\epsilon})$ convergence rate when the dominating mixed derivatives of $f$ up to order $\alpha$ belong to the fractional Sobolev space $\mathcal{H}_{\lambda,s,p}$ with $p\ge 2$ as defined below.

Following \cite{gnewuch2024qmcintegrationbasedarbitrary}, we say $f\in \mathcal{H}_{\lambda,s,p}$ with $0<\lambda\leq 1$ and $p>\lambda^{-1}$ if there exists a set of ``fractional derivatives" $D^\lambda_uf,u\subseteq 1{:}s$ such that $D^\lambda_uf \in L^p([0,1]^{|u|})$ and 
\begin{equation}\label{eqn:Df}
   f(\bsx)=\sum_{u\subseteq 1{:}s} \Gamma(\lambda)^{-|u|}\int_{[0,1]^{|u|}} D^\lambda_uf (\bst_u)\prod_{j\in u} (x_j-t_j)_+^{\lambda-1}\rd \bst_u 
\end{equation}
where $\Gamma(\lambda)$ is the Gamma function and $(x-t)_+=\max(x-t,0)$. When $u=\emptyset$, $D^\lambda_\emptyset f $ is simply a real number with $L^p$-norm equal to its absolute value, and the integral over $[0,1]^{|\emptyset|}$ evaluates to $D^\lambda_\emptyset f$. Lemma 7 of \cite{gnewuch2024qmcintegrationbasedarbitrary} shows $D^\lambda_uf$ is unique in the $L_p$ sense, so we will refer to $D^\lambda_uf $ as the $\lambda$-fractional derivative of $f$ in variables $u$. When $p\geq 2$, we further define the norm 
$$\Vert f\Vert_{\lambda,s}=\Bigl(\sum_{u\subseteq 1{:}s} \Vert D^\lambda_uf \Vert^2_{L^2([0,1]^{|u|})}\Bigr)^{1/2}.$$
Notice when $\lambda=1$, this norm is equal to the usual Sobolev norm of order $1$ anchored at $\bszero$ \cite{JOSEF:2008}.

Lemma 8 of \cite{gnewuch2024qmcintegrationbasedarbitrary} proves $f\in \mathcal{H}_{\lambda,s,p}$ is Hölder continuous with Hölder exponent $\lambda-p^{-1}$, so in particular $f\in \mathcal{H}_{\lambda,s,p}$ implies $f\in C([0,1]^s)$. A slightly modified proof shows the fractional variation $V^{u}_{\lambda-p^{-1}}(f)$ is also finite if $f\in \mathcal{H}_{\lambda,s,p}$. Therefore, we can apply Theorem~\ref{thm:rateVlambda} and prove the $O(n^{-\alpha-\lambda-1/2+\epsilon})$ RMSE convergence rate if $f^{(\bsalpha)}\in \mathcal{H}_{\lambda,s,p}$ with $p=\infty$ for all $\bsalpha\in \ints_{\le\alpha}^s$. We are going to prove the same rate holds under the weaker assumption $p\geq 2$. 

We will use the norm 
$$\Vert f\Vert_{\alpha,\lambda}=\Bigl(\sum_{\bsalpha\in \ints_{\le\alpha}^{s}}\Vert f^{(\bsalpha)}\Vert^2_{\lambda,s}\Bigr)^{1/2} .$$

First, we present a lemma showing $\mathcal{H}_{\lambda,s,p}$ can be embedded into $\mathcal{H}_{\lambda',s,p'}$ with $\lambda'<\lambda$ but $p'>p$. The proof technique is useful when we later prove Theorem~\ref{thm:fkboundHp}.

\begin{lemma}\label{lem:increasep}
    For $1\leq p <p' <\infty $ and $\lambda \in  (1/p-1/p', 1]$, $f\in \mathcal{H}_{\lambda,s,p}$ implies $f\in \mathcal{H}_{\lambda',s,p'}$ with any $\lambda'\in (0, \lambda-1/p+1/p')$.
\end{lemma}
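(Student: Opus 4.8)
The plan is to split the Riemann--Liouville kernel of order $\lambda$ appearing in~\eqref{eqn:Df} as a kernel of order $\lambda'$ composed with a kernel of order $\beta:=\lambda-\lambda'\in(0,1)$, and then to check that convolving with the order-$\beta$ piece sends $D^\lambda_uf\in L^p$ into $L^{p'}$. Note first that the hypothesis $\lambda'<\lambda-p^{-1}+(p')^{-1}$ is exactly $\beta>p^{-1}-(p')^{-1}\ge 0$. The elementary starting point is the Beta-function identity: for $0\le t<x\le 1$,
\begin{equation*}
(x-t)_+^{\lambda-1}=\frac{\Gamma(\lambda)}{\Gamma(\lambda')\Gamma(\beta)}\int_0^1(x-v)_+^{\lambda'-1}(v-t)_+^{\beta-1}\rd v,
\end{equation*}
which follows from the substitution $v=t+(x-t)w$ and $\int_0^1 w^{\beta-1}(1-w)^{\lambda'-1}\rd w=\Gamma(\beta)\Gamma(\lambda')/\Gamma(\lambda)$.

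Substituting this identity into each factor $(x_j-t_j)_+^{\lambda-1}$ in~\eqref{eqn:Df} and interchanging the order of integration, I would obtain
\begin{equation*}
f(\bsx)=\sum_{u\subseteq 1{:}s}\Gamma(\lambda')^{-|u|}\int_{[0,1]^{|u|}}D^{\lambda'}_uf(\bsv_u)\prod_{j\in u}(x_j-v_j)_+^{\lambda'-1}\rd\bsv_u,
\end{equation*}
where
\begin{equation*}
D^{\lambda'}_uf(\bsv_u):=\Gamma(\beta)^{-|u|}\int_{[0,1]^{|u|}}D^\lambda_uf(\bst_u)\prod_{j\in u}(v_j-t_j)_+^{\beta-1}\rd\bst_u
\end{equation*}
and the $u=\emptyset$ term ($D^{\lambda'}_\emptyset f=D^\lambda_\emptyset f$) is left unchanged. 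The interchange is legitimate by Tonelli's theorem: integrating out $\bsv_u$ first and using the Beta identity in reverse, the double integral of $|D^\lambda_uf(\bst_u)|$ against the product kernel collapses to a constant times $\int_{[0,1]^{|u|}}|D^\lambda_uf(\bst_u)|\prod_{j\in u}(x_j-t_j)_+^{\lambda-1}\rd\bst_u$, which is finite since $D^\lambda_uf\in L^p$ and $(x_j-\,\cdot\,)_+^{\lambda-1}\in L^{p/(p-1)}([0,1])$ (here $\lambda>p^{-1}$ because $f\in\mathcal{H}_{\lambda,s,p}$). Thus the whole proof reduces to showing $D^{\lambda'}_uf\in L^{p'}([0,1]^{|u|})$.

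For that last step, extend $D^\lambda_uf$ by zero to $\real^{|u|}$; then on $[0,1]^{|u|}$ the function $\Gamma(\beta)^{|u|}D^{\lambda'}_uf$ agrees with the convolution $\phi\ast D^\lambda_uf$, where $\phi(\bsw)=\prod_{j\in u}w_j^{\beta-1}\one\{0<w_j<1\}$; truncating the kernel to $w_j<1$ is harmless since only $|v_j-t_j|\le 1$ occurs. Now $\phi\in L^q(\real^{|u|})$ precisely when $(\beta-1)q>-1$, i.e.\ $q<(1-\beta)^{-1}$. Choosing $q$ with $q^{-1}=1-p^{-1}+(p')^{-1}$ (so $q\in[1,\infty)$), Young's convolution inequality gives
\begin{equation*}
\Vert D^{\lambda'}_uf\Vert_{L^{p'}([0,1]^{|u|})}\le\Gamma(\beta)^{-|u|}\,\Vert\phi\Vert_{L^q(\real^{|u|})}\,\Vert D^\lambda_uf\Vert_{L^p([0,1]^{|u|})},
\end{equation*}
and the condition $q<(1-\beta)^{-1}$ is equivalent to $\beta>p^{-1}-(p')^{-1}$, i.e.\ to the hypothesis $\lambda'<\lambda-p^{-1}+(p')^{-1}$ — with a strict inequality, so there is room to spare. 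This establishes $f\in\mathcal{H}_{\lambda',s,p'}$; by the uniqueness in Lemma~7 of \cite{gnewuch2024qmcintegrationbasedarbitrary}, the functions $D^{\lambda'}_uf$ constructed above are then the $\lambda'$-fractional derivatives of $f$.

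The main obstacle is exactly this last point — the mapping property of the order-$\beta$ Riemann--Liouville integral on a product domain. Allowing $\lambda'$ to lie strictly below the critical value $\lambda-p^{-1}+(p')^{-1}$ is what keeps the argument painless: it yields the strict inequality $q<(1-\beta)^{-1}$, so one may invoke the clean Young/Minkowski estimate instead of the borderline Hardy--Littlewood--Sobolev inequality, which would be delicate (and fails for $p=1$) at the endpoint exponent.
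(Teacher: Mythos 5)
Your proof is correct and follows essentially the same route as the paper: the same Beta-function splitting of the Riemann--Liouville kernel, the same definition of $D^{\lambda'}_uf$ as a convolution with the order-$\beta$ kernel, and the same application of Young's convolution inequality with $q^{-1}=1-p^{-1}+(p')^{-1}$, concluding from $\lambda'<\lambda-p^{-1}+(p')^{-1}$ that $\phi\in L^q$. Your added Tonelli justification and the remark on avoiding the endpoint Hardy--Littlewood--Sobolev case are sensible elaborations but do not change the argument.
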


\begin{proof}
    By the beta integral identity
    \begin{align*}
        \int_0^1 (x-y)^{\lambda'-1}_+  (y-t)_+^{\lambda-\lambda'-1}\rd y
        &=(x-t)^{\lambda-1}_+\int_{0}^1 (1-y)^{\lambda'-1} y^{\lambda-\lambda'-1}\rd y\\
        &=(x-t)^{\lambda-1}_+\frac{\Gamma(\lambda')\Gamma(\lambda-\lambda')}{\Gamma(\lambda)}.
    \end{align*}
    Hence for any $u\subseteq 1{:}s$
    \begin{align*}
       &\Gamma(\lambda)^{-|u|}\int_{[0,1]^{|u|}} D^\lambda_u f (\bst_u)\prod_{j\in u} (x_j-t_j)_+^{\lambda-1}\rd \bst_u\\
    =&\Gamma(\lambda)^{-|u|}\int_{[0,1]^{|u|}} D^\lambda_u f (\bst_u)\Big(\prod_{j\in u} \frac{\Gamma(\lambda)}{\Gamma(\lambda')\Gamma(\lambda-\lambda')} \int_0^1 (x_j-y_j)^{\lambda'-1}_+  (y_j-t_j)_+^{\lambda-\lambda'-1}\rd y_j\Big)\rd \bst_u \\
    =& \Gamma(\lambda')^{-|u|}\int_{[0,1]^{|u|}}\int_{[0,1]^{|u|}}  D^\lambda_u f (\bst_u)\Big(\prod_{j\in u} \frac{1}{\Gamma(\lambda-\lambda')} (x_j-y_j)^{\lambda'-1}_+  (y_j-t_j)_+^{\lambda-\lambda'-1}\Big)\rd \bsy_u\rd \bst_u\\
    =& \Gamma(\lambda')^{-|u|}\int_{[0,1]^{|u|}} D_u^{\lambda'}f (\bsy_u)\prod_{j\in u} (x_j-y_j)^{\lambda'-1}_+ \rd \bsy_u
    \end{align*}
    where 
    \begin{equation}\label{eqn:changelambda}
        D_u^{\lambda'}f (\bsy_u)=\Gamma(\lambda-\lambda')^{-|u|} \int_{[0,1]^{|u|}}D^\lambda_u f (\bst_u)\prod_{j\in u} (y_j-t_j)_+^{\lambda-\lambda'-1}\rd \bst_u.
    \end{equation}
    To show $D_u^{\lambda'}f$ is the $\lambda'$-fractional derivative of $f$ in variables $u$, it remains to show $D_u^{\lambda'}f\in L^{p'}([0,1]^{|u|})$. First note that $\Gamma(\lambda-\lambda')^{|u|}D_u^{\lambda'}f(\bsy_u)$ is the convolution of $D^\lambda_u f(\bst_u) \bsone\{\bst_u\in [0,1]^{|u|}\}$ with $(\prod_{j\in u} t_j)^{\lambda-\lambda'-1}\bsone\{\bst_u\in [0,1]^{|u|}\}$ over $\mathbb{R}^{|u|}$ when $\bsy_u$ is restricted to $[0,1]^{|u|}$. Extending the domain of $D_u^{\lambda'}f$ to $\mathbb{R}^{|u|}$ using the above convolution interpretation, we can apply Young's convolution inequality \cite{bahouri2011fourier} and get for $q=(1-1/p+1/p')^{-1}$
    \begin{align*}
        &\Gamma(\lambda-\lambda')^{|u|}\Vert D_u^{\lambda'}f \Vert_{L^{p'}([0,1]^{|u|})} \\
         \leq & \Gamma(\lambda-\lambda')^{|u|}\Vert D_u^{\lambda'}f \Vert_{L^{p'}(\mathbb{R}^{|u|})}\\
        \leq  &\Vert D^\lambda_u f(\bst_u) \bsone\{\bst_u\in [0,1]^{|u|}\}\Vert_{L^p(\mathbb{R}^{|u|})} \Vert (\prod_{j\in u} t_j)^{\lambda-\lambda'-1}\bsone\{\bst_u\in [0,1]^{|u|}\} \Vert_{L^q(\mathbb{R}^{|u|})}\\
        = &\Vert D^\lambda_u f\Vert_{L^p([0,1]^{|u|})} \Big(\prod_{j\in u}\int_0^1 t^{q(\lambda-\lambda'-1)}_j \rd t_j\Big)^{1/q}.
    \end{align*}
    Because $\lambda'<\lambda-1/p+1/p'$, $q(\lambda-\lambda'-1)>-q(1-1/p+1/p')=-1$. Therefore, $t^{q(\lambda-\lambda'-1)}_j$ is integrable over $[0,1]$ and $\Vert D_u^{\lambda'}f \Vert_{L^q([0,1]^{|u|})} <\infty$. Since this applies for all $u\subseteq 1{:}s$, we conclude  $f\in \mathcal{H}_{\lambda',s,p'}$.
\end{proof}

\begin{corollary}\label{cor:plessthan2}
Suppose $f^{(\bsalpha)}\in \mathcal{H}_{\lambda,s,p}$ for all $\bsalpha\in \ints_{\le\alpha}^{s}$ with $\alpha\in \natu_0$, $p\in [1,2)$ and $\lambda\in (1/p-1/2, 1]$. Then $\Vert f\Vert_{\alpha,\lambda'}<\infty$ with any $\lambda'\in (0,\lambda-1/p+1/2)$.
\end{corollary}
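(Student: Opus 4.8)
The plan is to obtain this as a direct consequence of Lemma~\ref{lem:increasep}, applied with $p'=2$ to each partial derivative $f^{(\bsalpha)}$ separately.

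First I would unwind the definition of the target norm. By definition,
$$\Vert f\Vert_{\alpha,\lambda'}^2=\sum_{\bsalpha\in\ints_{\le\alpha}^s}\Vert f^{(\bsalpha)}\Vert_{\lambda',s}^2=\sum_{\bsalpha\in\ints_{\le\alpha}^s}\ \sum_{u\subseteq1{:}s}\Vert D^{\lambda'}_u f^{(\bsalpha)}\Vert^2_{L^2([0,1]^{|u|})}.$$
Since $\ints_{\le\alpha}^s=\{0,1,\dots,\alpha\}^s$ has only $(\alpha+1)^s$ elements and each inner sum has $2^s$ terms, it suffices to show that for every fixed $\bsalpha\in\ints_{\le\alpha}^s$ and $u\subseteq1{:}s$ the $\lambda'$-fractional derivative $D^{\lambda'}_u f^{(\bsalpha)}$ exists and lies in $L^2([0,1]^{|u|})$; no summability issue can arise from a finite index set.

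Next, fix such $\bsalpha$ and $u$ and set $g:=f^{(\bsalpha)}$, which by hypothesis belongs to $\mathcal{H}_{\lambda,s,p}$. The parameters match the hypotheses of Lemma~\ref{lem:increasep} with $p'=2$: we have $1\le p<2=p'<\infty$, and $\lambda\in(1/p-1/2,1]=(1/p-1/p',1]$, so in particular $\lambda-1/p+1/2>0$ and the interval $(0,\lambda-1/p+1/2)=(0,\lambda-1/p+1/p')$ is nonempty. Hence for any $\lambda'$ in this interval the lemma produces the $\lambda'$-fractional derivative $D^{\lambda'}_u g$, given explicitly by the convolution formula~\eqref{eqn:changelambda}, together with the bound $\Vert D^{\lambda'}_u g\Vert_{L^{2}([0,1]^{|u|})}<\infty$ — this is exactly the $L^{p'}$ estimate furnished by the Young's-convolution step in the proof of Lemma~\ref{lem:increasep}, the finiteness coming from $q(\lambda-\lambda'-1)>-1$ with $q=(3/2-1/p)^{-1}$, which is guaranteed by $\lambda'<\lambda-1/p+1/2$. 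Summing these finitely many finite quantities gives $\Vert f\Vert_{\alpha,\lambda'}<\infty$.

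There is essentially no obstacle here: all the real work is inside Lemma~\ref{lem:increasep}, and what remains is bookkeeping — verifying that the parameter windows line up (the choice $p'=2$ is precisely what forces the endpoint $\lambda-1/p+1/2$ in the statement) and noting the finiteness of $\ints_{\le\alpha}^s$. The one point worth flagging is that Lemma~\ref{lem:increasep} is invoked only for the $L^2$-membership of the fractional derivatives $D^{\lambda'}_u f^{(\bsalpha)}$, which is all that the norm $\Vert\cdot\Vert_{\lambda',s}$ records; one need not check whether $\mathcal{H}_{\lambda',s,2}$ falls within the admissible range $p>\lambda^{-1}$ of that space's definition.
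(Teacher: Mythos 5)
Your proposal is correct and matches the paper's proof: apply Lemma~\ref{lem:increasep} with $p'=2$ to each of the finitely many partial derivatives $f^{(\bsalpha)}$, $\bsalpha\in\ints_{\le\alpha}^s$, to obtain $f^{(\bsalpha)}\in\mathcal{H}_{\lambda',s,2}$ and hence $\Vert f^{(\bsalpha)}\Vert_{\lambda',s}<\infty$, then sum. The point you flag at the end — that $\lambda'$ may fall below $1/2$ so that $\mathcal{H}_{\lambda',s,2}$ is formally outside the admissible range $p>\lambda^{-1}$ — is a genuine subtlety the paper glosses over, and you resolve it correctly by noting that only $D^{\lambda'}_u f^{(\bsalpha)}\in L^2$ is needed to make sense of the norm.
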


\begin{proof}
    Applying Lemma~\ref{lem:increasep} with $p'=2$, we see $f^{(\bsalpha)}\in \mathcal{H}_{\lambda,s,p}$ implies $f^{(\bsalpha)}\in \mathcal{H}_{\lambda',s,2}$ with any $\lambda'\in (0,\lambda-1/p+1/2)$. Hence $\Vert f^{(\bsalpha)}\Vert_{\lambda',s}<\infty$ for all $\bsalpha\in \ints_{\le\alpha}^{s}$ and  $\Vert f\Vert_{\alpha,\lambda'}<\infty$.
\end{proof}

The above corollary shows we can focus our analysis on $p\geq 2$ case. The next theorem is the counterpart of Theorem~\ref{thm:fkboundgealpha}.

\begin{theorem}\label{thm:fkboundHp}
Let $\alpha\in \natu_0$ and $0<\lambda\le 1$. Suppose $f^{(\alpha,\dots,\alpha)}\in \mathcal{H}_{\lambda,s,p}$ for $p\geq 2$. 
 Then for $\bsell\in \natu^s$ and  $B_{\alpha,\bsell,s}=\{\bsk\in \natu_*^s\mid \lceil\bsk\rceil_{\alpha+1}=\bsell\}$
 \begin{equation}\label{eqn:fkboundHp}
     \Bigl(\sum_{\bsk\in B_{\alpha,\bsell,s}}|\hat{f}(\bsk)|^2\Bigr)^{1/2}\leq C^s_{\lambda}\Vert D^\lambda_{1{:}s} f^{(\alpha,\dots,\alpha)}\Vert_{L^2([0,1]^s)}2^{-(\alpha+\lambda)\Vert\bsell\Vert_1}
 \end{equation}
where $C_{\lambda}$ is a constant defined in equation~\eqref{eqn:Clambda}.
\end{theorem}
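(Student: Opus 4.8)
The plan is to run the proof of Theorem~\ref{thm:fkboundgealpha} essentially verbatim, substituting a new estimate only at the point where the generalized Vitali variation was used. Write $g=f^{(\alpha,\dots,\alpha)}$. Exactly as there, I start from equation~\eqref{eqn:exactfk} with $\bsalpha=(\alpha,\dots,\alpha)$, decompose $\kappa_j$ into the $\alpha$ largest bits $\kappa_j^+$, the $(\alpha+1)$st bit $\ell_j$, and the rest $\kappa_j^-\subseteq 1{:}\ell_j-1$, and apply equations~\eqref{eqn:fktodeltaf} and~\eqref{eqn:fksquaresum} to get $\sum_{\kappa_1^-,\dots,\kappa_s^-}|\hat f(\bsk)|^2=|A|\sum_{\bsa\in A}|f_{\bsa}|^2$ with $f_{\bsa}=\int_{\mathrm{EI}(\bsell,2\bsa)}\Delta(g,\bsx+J_{\bsell})\prod_{j=1}^sW_{\kappa_j^+}(x_j)\rd\bsx$; these steps use only equation~\eqref{eqn:exactfk}, Walsh orthonormality, and periodicity of $W_\kappa$, so they transfer unchanged. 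The hypothesis $p\ge 2$ enters only through $D^\lambda_{1{:}s}g\in L^p([0,1]^s)\subseteq L^2([0,1]^s)$ (the case $p<2$ reduces to $p=2$ and a smaller $\lambda$ via Corollary~\ref{cor:plessthan2}), so it remains to bound $|A|\sum_{\bsa}|f_{\bsa}|^2$ in terms of $\Vert D^\lambda_{1{:}s}g\Vert_{L^2([0,1]^s)}$.

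The key new ingredient is to feed the representation~\eqref{eqn:Df} of $g\in\mathcal{H}_{\lambda,s,p}$ into $\Delta(g,\bsx+J_{\bsell})$. Since $\Delta(\,\cdot\,,J)$ is the alternating sum over the $2^s$ vertices of the cube $J$, it kills every summand of~\eqref{eqn:Df} indexed by a proper subset $u\subsetneq 1{:}s$: for $j\notin u$ the two choices of the $j$th vertex coordinate contribute with opposite signs and equal value, giving $0$. This is exactly where $\bsell\in\natu^s$ (every cube genuinely $s$-dimensional) is needed. Hence only $u=1{:}s$ survives, and using the identity $\sum_{v\subseteq 1{:}s}(-1)^{s-|v|}\prod_{j\in v}a_j\prod_{j\notin v}b_j=\prod_{j=1}^s(b_j-a_j)$ one gets
\[\Delta(g,\bsx+J_{\bsell})=\Gamma(\lambda)^{-s}\int_{[0,1]^s}D^\lambda_{1{:}s}g(\bst)\prod_{j=1}^s\phi_{\ell_j}(x_j,t_j)\rd\bst,\quad \phi_\ell(x,t)=(x+2^{-\ell}-t)_+^{\lambda-1}-(x-t)_+^{\lambda-1}.\]

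To finish, I bound $|f_{\bsa}|^2$ by Cauchy--Schwarz, splitting off the nonnegative weight $\prod_j W_{\kappa_j^+}$ from $\Delta(g,\bsx+J_{\bsell})$. By periodicity of $W_\kappa$ the resulting weight integral over $\mathrm{EI}(\bsell,2\bsa)$ equals $\prod_j 2^{-\ell_j}\int_0^1 W_{\kappa_j^+}^2$, which by equations~\eqref{eqn:Wint} and~\eqref{eqn:Wmax} is at most $\prod_j 2^{1-\ell_j}\prod_{\ell'\in\kappa_j^+}4^{-\ell'-1}$ and is independent of $\bsa$; multiplying by $|A|=\prod_j 2^{\ell_j-1}$ and summing over the pairwise disjoint cubes $\mathrm{EI}(\bsell,2\bsa)\subseteq[0,1]^s$ collapses the estimate to $\big(\prod_j\prod_{\ell'\in\kappa_j^+}4^{-\ell'-1}\big)\int_{[0,1]^s}\Delta(g,\bsx+J_{\bsell})^2\rd\bsx$. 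I then control the last integral by Schur's test applied to the integral operator with nonnegative kernel $\prod_j|\phi_{\ell_j}(x_j,t_j)|$: it is at most $\Gamma(\lambda)^{-2s}\big(\sup_{\bsx}\int\prod_j|\phi_{\ell_j}|\rd\bst\big)\big(\sup_{\bst}\int\prod_j|\phi_{\ell_j}|\rd\bsx\big)\Vert D^\lambda_{1{:}s}g\Vert_{L^2}^2$, and both suprema factorize into one-dimensional integrals of $|\phi_\ell|$. Finally I sum over $\kappa_j^+$ as in equation~\eqref{eqn:alphasum} to extract the factor $4^{-\alpha\ell_j}$, and read off an explicit value of $C_\lambda$ from the accumulated one-dimensional constants; since the $\alpha$-dependent factor $12^{-\alpha}/\alpha!\le 1$ can be dropped, $C_\lambda$ depends only on $\lambda$.

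The main obstacle is the one-dimensional kernel bound $\int_0^1|\phi_\ell(x,t)|\rd t\le c_\lambda\,2^{-\lambda\ell}$ (and the same with $x$ and $t$ interchanged), with $c_\lambda$ staying finite as $\lambda\to 0$. The difficulty is that $(x-t)_+^{\lambda-1}$ is not even square-integrable in $t$ when $\lambda\le 1/2$, so the Schur argument must be run with $L^1$ rows and columns and must exploit the cancellation built into $\phi_\ell$: away from the two singular points the mean value theorem gives $|\phi_\ell(x,t)|\le(1-\lambda)2^{-\ell}\,\mathrm{dist}\big(t,\{x,x+2^{-\ell}\}\big)^{\lambda-2}$, whose integral over $\{|t-x|\ge 2^{-\ell}\}$ is $O(2^{-\lambda\ell})$ uniformly in $x$, while on the remaining window of length $O(2^{-\ell})$ the crude bound $|\phi_\ell|\le(x+2^{-\ell}-t)_+^{\lambda-1}+(x-t)_+^{\lambda-1}$ integrates to $O(\lambda^{-1}2^{-\lambda\ell})$. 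Carefully tracking these constants (and the $\Gamma(\lambda)^{-1}$ from~\eqref{eqn:Df}) is the only genuinely computational part.
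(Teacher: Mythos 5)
Your proposal follows the paper's proof almost exactly: the reduction via equations~\eqref{eqn:fktodeltaf}--\eqref{eqn:fksquaresum}, Cauchy--Schwarz to split off the weight $\prod_j W_{\kappa_j^+}$, the observation that in $\Delta(g,\bsx+J_{\bsell})$ only the $u=1{:}s$ term of~\eqref{eqn:Df} survives, and the $L^1$-kernel control of the resulting integral operator are all the same steps. Where you invoke Schur's test, the paper invokes Young's convolution inequality; for a convolution kernel these are the same estimate, since the row and column integrals coincide with $\Vert k\Vert_{L^1}$. The one place you genuinely depart is unnecessary: you propose to bound $\int_{\mathbb{R}}|\phi_\ell(x,\cdot)|$ by a near/far split and the mean value theorem, treating it as if cancellation must be extracted by hand. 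In fact $\phi_\ell$ has a single sign on each of $(-\infty,x)$ and $(x,x+2^{-\ell})$ and the integral telescopes, giving the exact value $2\lambda^{-1}2^{-\lambda\ell}$ for $\lambda<1$ (equivalently $\Vert F_{\lambda,\ell}\Vert_{L^1}=2\cdot 2^{-\lambda\ell}$, as in the paper's equation~\eqref{eqn:FL1norm}). This also shows your aside that $c_\lambda$ stays finite as $\lambda\to 0$ is not accurate --- the kernel $L^1$-norm is $\Theta(\lambda^{-1})$, and this blow-up is cancelled only by the prefactor $\Gamma(\lambda)^{-s}$ from~\eqref{eqn:Df}, which is exactly why the paper packages the factor $\lambda$ into $F_{\lambda,\bsell}$ so that $\Gamma(\lambda+1)=\lambda\Gamma(\lambda)$ appears. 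Your Cauchy--Schwarz with $W_{\kappa^+}$ in $L^2$ rather than $L^\infty$ actually yields a marginally better constant than the paper's, but aside from that and the gratuitous MVT detour, the argument is the paper's.
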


\begin{proof}
    We will use the notation from the proof of Theorem~\ref{thm:fkboundgealpha}. Again let $g(\bsx)=f^{(\alpha,\dots,\alpha)}(\bsx)$. First we note that Lemma~\ref{lem:exactfk} still applies here because  $g\in \mathcal{H}_{\lambda,s,p}$ implies $g\in C([0,1]^s)$. By equation~\eqref{eqn:fktodeltaf} and \eqref{eqn:fksquaresum},
    \begin{equation}\label{eqn:kappaminussum}
        \sum_{\kappa^-_1,\dots,\kappa^-_s}|\hat{f}(\bsk)|^2=|A|\sum_{\bsa\in A}\Big|\int_{\mathrm{EI}(\bsell,2\bsa)} \Delta(g,\bsx+J_{\bsell})\prod_{j=1}^s W_{\kappa^+_j}(x_j)\rd \bsx\Big|^2.
    \end{equation}
Using equation~\eqref{eqn:Wmax} to bound $W_{\kappa^+_j}$ and applying Hölder's inequality, we get
\begin{align}\label{eqn:fkboundHp}
    &\sum_{\kappa^-_1,\dots,\kappa^-_s}|\hat{f}(\bsk)|^2 \nonumber\\
    \leq & 4^s\Bigl(\prod_{j=1}^s\prod_{\ell'\in\kappa^+_j} 4^{-\ell'-1} \Bigr)|A|\sum_{\bsa\in A}\Big(\int_{\mathrm{EI}(\bsell,2\bsa)} |\Delta(g,\bsx+J_{\bsell})|\rd \bsx\Big)^2 \nonumber\\
    \leq & 4^s\Bigl(\prod_{j=1}^s\prod_{\ell'\in\kappa^+_j} 4^{-\ell'-1} \Bigr)|A|\mathrm{Vol}(\mathrm{EI}(\bsell,2\bsa))\sum_{\bsa\in A}\int_{\mathrm{EI}(\bsell,2\bsa)} |\Delta(g,\bsx+J_{\bsell})|^2\rd \bsx \nonumber \\
    \leq & 2^s \Bigl(\prod_{j=1}^s\prod_{\ell'\in\kappa^+_j} 4^{-\ell'-1} \Bigr)  \int_{[0,1]^s} |\Delta(g,\bsx+J_{\bsell})|^2 \prod_{j=1}^s \bsone\{x_j\in [0,1-2^{-\ell_j})\} \rd \bsx,
\end{align}
where we have used $\mathrm{EI}(\bsell,2\bsa)$ are disjoint for $\bsa\in A$ and $|A|\mathrm{Vol}(\mathrm{EI}(\bsell,2\bsa))=2^{-s}|A|\mathrm{Vol}(\mathrm{EI}(\bsell-\bsone_{1{:}s},\bsa))=2^{-s}$ in the last inequality.
    
    By equation~\eqref{eqn:deltaf} and \eqref{eqn:Df}, when $x_j\in [0,1-2^{-\ell_j})$ for all $j\in 1{:}s$,
$$\Delta(g,\bsx+J_{\bsell})=(\lambda\Gamma(\lambda))^{-s}\int_{[0,1]^{s}} D^\lambda_{1{:}s}g (\bst)\prod_{j=1}^s \Delta_\lambda(x_j,t_j,\ell_j)\rd \bst$$
with 
$$\Delta_\lambda(x,t,\ell)=\lambda(x-t+2^{-\ell})_+^{\lambda-1}-\lambda (x-t)_+^{\lambda-1}.$$
Define 
$$F_{\lambda,\bsell}(\bst)=\prod_{j=1}^s \Big(\lambda(t_j+2^{-\ell_j})_+^{\lambda-1}-\lambda(t_j)_+^{\lambda-1}\Big).$$
Then $(\lambda\Gamma(\lambda))^{s}\Delta(g,\bsx+J_{\bsell})$ can be viewed as the convolution of $D^\lambda_{1{:}s}g (\bst) \bsone\{\bst\in [0,1]^s\} $ with $F_{\lambda,\bsell}(\bst)$ over $\mathbb{R}^{s}$. Like in the proof of Lemma~\ref{lem:increasep}, we extend the domain of $\Delta(g,\bsx+J_{\bsell})$ to $\bsx\in \mathbb{R}^{s}$ using this convolution interpretation and apply Young's convolution inequality to get
\begin{align}\label{eqn:Young}
   & (\lambda\Gamma(\lambda))^{2s}\int_{[0,1]^s} |\Delta(g,\bsx+J_{\bsell})|^2 \prod_{j=1}^s \bsone\{x_j\in [0,1-2^{-\ell_j})\} \rd \bsx \nonumber\\
   \leq & \Vert (\lambda\Gamma(\lambda))^{s} \Delta(g,\bsx+J_{\bsell}) \Vert^2_{L^2(\mathbb{R}^{s})} \nonumber\\
   \leq & \Vert D^\lambda_{1{:}s}g (\bst) \bsone\{\bst\in [0,1]^s\} \Vert^2_{L^2(\mathbb{R}^{s})} \Vert F_{\lambda,\bsell}(\bst)\Vert^2_{L^1(\mathbb{R}^{s})}\nonumber\\
   = &\Vert D^\lambda_{1{:}s}g (\bst)\} \Vert^2_{L^2([0,1]^s)} \Big(\int_{\mathbb{R}^{s}} |F_{\lambda,\bsell}(\bst)|\rd \bst\Big)^2.
\end{align}
By the definition of $F_{\lambda,\bsell}(\bst)$, we can further write
\begin{align}\label{eqn:FL1norm}
    \int_{\mathbb{R}^{s}} |F_{\lambda,\bsell}(\bst)|\rd \bst = &\prod_{j=1}^s \Big( \int_{-2^{-\ell_j}}^0 \lambda(t_j+2^{-\ell_j})^{\lambda-1} \rd t_j +\int_{0}^\infty \lambda t_j^{\lambda-1}- \lambda(t_j+2^{-\ell_j})^{\lambda-1} \rd t_j \Big) \nonumber \\
    =& \prod_{j=1}^s \Big( 2^{-\lambda\ell_j} + \lim_{t_j\to\infty} t^\lambda_j - (t_j+2^{-\ell_j})^\lambda + 2^{-\lambda\ell_j}  \Big) \nonumber \\
    = & \begin{cases}
        2^{s-\lambda \Vert\bsell\Vert_1}& \text{ if } \lambda\in (0,1),\\
         2^{-\Vert\bsell\Vert_1} &\text{ if } \lambda=1.
    \end{cases}
\end{align}
Putting equation~\eqref{eqn:Young} and \eqref{eqn:FL1norm} into equation~\eqref{eqn:fkboundHp}, we get
$$ \sum_{\kappa^-_1,\dots,\kappa^-_s}|\hat{f}(\bsk)|^2\leq C^{2s}_{\lambda}\Vert D^\lambda_{1{:}s}g\Vert^2_{L^2([0,1]^s)}4^{-\lambda \Vert\bsell\Vert_1}\Bigl(\prod_{j=1}^s\prod_{\ell'\in\kappa^+_j} 4^{-\ell'-1} \Bigr)$$
with 
\begin{equation}\label{eqn:Clambda}
    C_{\lambda}=\begin{cases}2\sqrt{2} \Gamma(\lambda+1)^{-1}
     &  \text{ if } 0<\lambda<1,\\
    \sqrt{2} &\text{ if } \lambda=1.
    \end{cases}
\end{equation}
Notice we have used $\Gamma(\lambda+1)=\lambda \Gamma(\lambda)$ to simplify the expression.

Finally by equation~\eqref{eqn:alphasum}, 
\begin{align*}
\sum_{\kappa^+_1,\dots,\kappa^+_s}\sum_{\kappa^-_1,\dots,\kappa^-_s}|\hat{f}(\bsk)|^2
&\leq C^{2s}_{\lambda}\Vert D^\lambda_{1{:}s} g\Vert^2_{L^2([0,1]^s)}4^{-\lambda \Vert\bsell\Vert_1}\Bigl(\prod_{j=1}^s\frac{4^{-\alpha \ell_j}12^{-\alpha}}{\alpha!}\Big)\\
&\leq C^{2s}_{\lambda}\Vert D^\lambda_{1{:}s} g\Vert^2_{L^2([0,1]^s)}4^{-(\alpha+\lambda)\Vert\bsell\Vert_1}
\end{align*}
which completes the proof since the left sum is equivalent to the sum of $\bsk$ over $B_{\alpha,\bsell,s}$.
\end{proof}

\begin{corollary}\label{cor:fkboundHp}
    Suppose $\Vert f\Vert_{\alpha,\lambda}<\infty$ for $\alpha\in \natu_0$ and $0<\lambda\le 1$. Then for $\bsell\in \natu_*^s$ and $B_{\alpha,\bsell,s}=\{\bsk\in \natu_*^s\mid \lceil\bsk\rceil_{\alpha+1}=\bsell\}$
$$\Bigl(\sum_{\bsk\in B_{\alpha,\bsell,s}}|\hat{f}(\bsk)|^2\Bigr)^{1/2}\leq C^s_{\lambda}\Vert f\Vert_{\alpha,\lambda} 2^{-(\alpha+\lambda)\Vert\bsell\Vert_1}$$
where $C_{\lambda}$ is the constant defined in Theorem~\ref{thm:fkboundHp}.
\end{corollary}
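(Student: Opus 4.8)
The plan is to follow the reduction used for Corollary~\ref{cor:fkboundVlambda}, with Theorem~\ref{thm:fkboundHp} playing the role of Theorem~\ref{thm:fkboundgealpha}. If $\bsell\in\natu^s$ has every component positive, the estimate is immediate from Theorem~\ref{thm:fkboundHp} together with $\Vert D^\lambda_{1{:}s}f^{(\alpha,\dots,\alpha)}\Vert_{L^2([0,1]^s)}\le\Vert f^{(\alpha,\dots,\alpha)}\Vert_{\lambda,s}\le\Vert f\Vert_{\alpha,\lambda}$, so I would assume $u=\{j\in1{:}s\mid\ell_j>0\}$ is a nonempty proper subset. For $\bsk\in B_{\alpha,\bsell,s}$ the condition $\ell_j=0$ forces $|\kappa_j|\le\alpha$ for $j\in u^c$; I would then choose $\bsalpha$ with $\alpha_j=\alpha$ on $u$ and $\alpha_j=|\kappa_j|$ on $u^c$, apply equation~\eqref{eqn:exactfk}, and integrate out the $u^c$ variables, using $\walkappa{\kappa_j\setminus\lceil\kappa_j\rceil_{1{:}\alpha_j}}W_{\lceil\kappa_j\rceil_{1{:}\alpha_j}}=W_{\kappa_j}$ when $\alpha_j=|\kappa_j|$. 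This writes $\hat f(\bsk)$ as exactly the integral treated by Theorem~\ref{thm:fkboundHp} in dimension $|u|$ with $f^{(\alpha,\dots,\alpha)}$ replaced by $g_{\kappa_{u^c}}=I_u\bigl(f^{(\bsalpha)}\prod_{j\in u^c}W_{\kappa_j}\bigr)$. Running the proof of Theorem~\ref{thm:fkboundHp} verbatim on the sub-sum over $B'_{\kappa_{u^c}}=\{\bsk'\in B_{\alpha,\bsell,s}\mid\kappa'_j=\kappa_j\text{ for }j\in u^c\}$, all of whose elements share the same $g_{\kappa_{u^c}}$, then gives
\[
\sum_{\bsk'\in B'_{\kappa_{u^c}}}|\hat f(\bsk')|^2\le C_\lambda^{2|u|}\,\Vert D^\lambda_u g_{\kappa_{u^c}}\Vert^2_{L^2([0,1]^{|u|})}\,4^{-(\alpha+\lambda)\Vert\bsell\Vert_1},
\]
after the routine check (via Fubini) that $g_{\kappa_{u^c}}\in\mathcal H_{\lambda,|u|,p}$.

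The main work is bounding $\Vert D^\lambda_u g_{\kappa_{u^c}}\Vert_{L^2}$. Writing $g_{\kappa_{u^c}}=\bigl(\prod_{j\in u^c}\Vert W_{\kappa_j}\Vert_\infty\bigr)I_u\bigl(f^{(\bsalpha)}\prod_{j\in u^c}\rho_j\bigr)$ with $\rho_j=W_{\kappa_j}/\Vert W_{\kappa_j}\Vert_\infty\in D_{u^c}$, I would insert the representation~\eqref{eqn:Df} for $f^{(\bsalpha)}$, integrate out the $u^c$ variables, and regroup the terms by which variables carry a $(\cdot)_+^{\lambda-1}$ factor; only the subsets $v\supseteq u$ contribute to the full coefficient $D^\lambda_u g_{\kappa_{u^c}}$, so with $v=u\cup w$, $w\subseteq u^c$, one obtains $D^\lambda_u g_{\kappa_{u^c}}=\bigl(\prod_{j\in u^c}\Vert W_{\kappa_j}\Vert_\infty\bigr)\sum_{w\subseteq u^c}H^{(w)}$, where $H^{(w)}$ equals $\Gamma(\lambda)^{-|w|}$ times an integral of $D^\lambda_{u\cup w}f^{(\bsalpha)}$ against the kernels $t\mapsto\int_0^1(x-t)_+^{\lambda-1}\rho_j(x)\rd x$ for $j\in w$ and the constants $\int_0^1\rho_j$ for $j\in u^c\setminus w$. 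Minkowski's integral inequality followed by the Cauchy--Schwarz inequality, using $\Vert\rho_j\Vert_\infty\le1$ and $\int_0^1(x-t)_+^{\lambda-1}\rho_j(x)\rd x\le\lambda^{-1}$, yield $\Vert H^{(w)}\Vert_{L^2([0,1]^{|u|})}\le\Gamma(\lambda+1)^{-|w|}\Vert D^\lambda_{u\cup w}f^{(\bsalpha)}\Vert_{L^2([0,1]^{|u\cup w|})}$.

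It then remains to sum. As in equation~\eqref{eqn:alphasum}, $\sum_{\kappa_{u^c}:|\kappa_j|=\alpha_j}\prod_{j\in u^c}\Vert W_{\kappa_j}\Vert^2_\infty\le1$; combining this with the triangle inequality and the Cauchy--Schwarz inequality over the $2^{|u^c|}$ choices of $w$, and then summing over $\alpha_j\in\ints_{\le\alpha}$ for $j\in u^c$, gives
\begin{align*}
\sum_{\kappa_{u^c}}\Vert D^\lambda_u g_{\kappa_{u^c}}\Vert^2_{L^2([0,1]^{|u|})}
&\le 2^{|u^c|}\Gamma(\lambda+1)^{-2|u^c|}\sum_{\bsalpha\in\ints_{\le\alpha}^s}\sum_{v\subseteq1{:}s}\Vert D^\lambda_v f^{(\bsalpha)}\Vert^2_{L^2([0,1]^{|v|})}\\
&=2^{|u^c|}\Gamma(\lambda+1)^{-2|u^c|}\Vert f\Vert^2_{\alpha,\lambda}.
\end{align*}
Multiplying by $C_\lambda^{2|u|}4^{-(\alpha+\lambda)\Vert\bsell\Vert_1}$, summing the first display over the admissible $\kappa_{u^c}$, and taking square roots leaves the constant $C_\lambda^{|u|}\bigl(\sqrt2\,\Gamma(\lambda+1)^{-1}\bigr)^{|u^c|}$, which is at most $C_\lambda^s$ because $\sqrt2\,\Gamma(\lambda+1)^{-1}\le C_\lambda$ in both branches of definition~\eqref{eqn:Clambda}. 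The step I expect to be the main obstacle is precisely this control of $D^\lambda_u g_{\kappa_{u^c}}$: whereas the fractional Vitali variation carries the built-in margin functional $V^u_\lambda$ that made the corresponding step of Corollary~\ref{cor:fkboundVlambda} essentially automatic, the $\lambda$-fractional-derivative norm does not localize, so $D^\lambda_u g_{\kappa_{u^c}}$ spreads over $D^\lambda_{u\cup w}f^{(\bsalpha)}$ for every $w\subseteq u^c$, and one must check that the constants collapse back to exactly $C_\lambda^s$ after the $w$-, $\kappa_{u^c}$-, and $\bsalpha$-summations --- which hinges on the precise value of $C_\lambda$.
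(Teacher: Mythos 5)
Your proposal is correct and follows essentially the same route as the paper's proof: reduce to the case of a proper nonempty $u=\{j:\ell_j>0\}$, take $\alpha_j=\alpha$ on $u$ and $\alpha_j=|\kappa_j|$ on $u^c$, apply the argument of Theorem~\ref{thm:fkboundHp} to $g_{\kappa_{u^c}}=I_u(f^{(\bsalpha)}\prod_{j\in u^c}W_{\kappa_j})$, then expand $D^\lambda_u I_u(f^{(\bsalpha)}\rho_{u^c})$ over $w\subseteq u^c$ and absorb the resulting sum by Cauchy--Schwarz and the $\sum_{\kappa_{u^c}}\prod\Vert W_{\kappa_j}\Vert_\infty^2\le 1$ bound. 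The only cosmetic difference is in how the $w$-sum is controlled: the paper computes the exact $L^2$ norm of the $(1-t)^\lambda$ kernel and applies Cauchy--Schwarz with the weights $\Gamma(\lambda+1)^{-|w|}(2\lambda+1)^{-|w|/2}$ to get the sharper per-coordinate factor $1+\Gamma(\lambda+1)^{-2}(2\lambda+1)^{-1}$, while you use the cruder $K_j\le\lambda^{-1}$ and a uniform-weight Cauchy--Schwarz giving $2\Gamma(\lambda+1)^{-2}$ per coordinate; both are bounded by $C_\lambda^2$, so the final constant $C_\lambda^s$ is identical.
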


\begin{proof}

We again let $u=\{j\in 1{:}s\mid \ell_j>0\}$ and $\bsalpha=(\alpha_1,\dots,\alpha_s)$ with $\alpha_j=\alpha$ if $j\in u$. By the proof of Corollary~\ref{cor:fkboundVlambda} with equation~\eqref{eqn:sumfk'} replaced by
$$ \sum_{\bsk'\in  B'_{\kappa_{u^c}}}|\hat{f}(\bsk')|^2\leq C^{2|u|}_{\lambda} \Vert D^\lambda_{u} g_{\kappa_{u^c}}\Vert^2_{L^2([0,1]^{|u|})} 4^{-(\alpha+\lambda)\Vert\bsell\Vert_1},$$
equation~\eqref{eqn:corNalambda} becomes
\begin{equation}\label{eqn:corfalambda}
    \sum_{\bsk\in B_{\alpha,\bsell,s}}|\hat{f}(\bsk)|^2\leq C^{2|u|}_{\lambda} 4^{-(\alpha+\lambda)\Vert\bsell\Vert_1}\sum_{\alpha_j\in \ints_{\le\alpha}, j\in u^c} \Vert f^{(\bsalpha)} \Vert^2_{\lambda,u} 
\end{equation}
where 
$$\Vert g \Vert_{\lambda,u}=\sup_{\rho_{u^c}\in D_{u^c}} \Vert D^\lambda_u I_u(g\rho_{u^c})\Vert_{L^2([0,1]^{|u|})}$$
for $g\in \mathcal{H}_{\lambda,s,p}$ with $I_u$ and $D_{u^c}$ defined in equation~\eqref{eqn:Iu} and \eqref{eqn:Du}. By equation~\eqref{eqn:Df},
\begin{align*}
    &I_u(g\rho_{u^c})(\bsx_u)\\
    =&\int_{[0,1]^{s-|u|}}\Big(\sum_{v\subseteq 1{:}s} \Gamma(\lambda)^{-|v|}\int_{[0,1]^{|v|}} D^\lambda_v g (\bst_v)\prod_{j\in v} (x_j-t_j)_+^{\lambda-1}\rd \bst_v\Big) \prod_{j\in u^c}\rho_j(x_j)\rd \bsx_{u^c} \\
    =&\sum_{w\subseteq u} \Gamma(\lambda)^{-|w|}\int_{[0,1]^{|w|}}\Big(   \sum_{w'\subseteq u^c}\Gamma(\lambda)^{-|w'|}\int_{[0,1]^{|w'|}}D^\lambda_{w\cup w'} g (\bst_{w\cup w'}) \\
    &\int_{[0,1]^{s-|u|}}\prod_{j\in w'} (x_j-t_j)_+^{\lambda-1}\prod_{j\in u^c}\rho_j(x_j)\rd \bsx_{u^c}\rd \bst_{w'} \Big) \prod_{j\in w} (x_j-t_j)_+^{\lambda-1} \rd \bst_w.
\end{align*}
Hence by the definition of fractional derivative
\begin{align*}
    D^\lambda_uI_u(g\rho_{u^c})(\bsx_u)= &\sum_{w'\subseteq u^c}\Gamma(\lambda)^{-|w'|}\int_{[0,1]^{|w'|}}D^\lambda_{u\cup w'} g (\bst_{ w'};\bsx_u) \\
    &\int_{[0,1]^{s-|u|}}\prod_{j\in w'} (x_j-t_j)_+^{\lambda-1}\prod_{j\in u^c}\rho_j(x_j)\rd \bsx_{u^c}\rd \bst_{w'}.
\end{align*}
Using $\Vert\rho_j\Vert_\infty\leq 1$, we get the bound 
\begin{align*}
    &|D^\lambda_uI_u(g\rho_{u^c})(\bsx_u)|\\
    \leq &\sum_{w'\subseteq u^c}\Gamma(\lambda)^{-|w'|}\int_{[0,1]^{|w'|}}|D^\lambda_{u\cup w'} g (\bst_{ w'};\bsx_u)| \int_{[0,1]^{s-|u|}}\prod_{j\in w'} (x_j-t_j)_+^{\lambda-1}\rd \bsx_{u^c}\rd \bst_{w'}\\
    =&\sum_{w'\subseteq u^c}(\lambda\Gamma(\lambda))^{-|w'|}\int_{[0,1]^{|w'|}}|D^\lambda_{u\cup w'} g (\bst_{ w'};\bsx_u)|\prod_{j\in w'} (1-t_j)^\lambda\rd \bst_{w'}\\
    \leq & \sum_{w'\subseteq u^c}(\lambda\Gamma(\lambda))^{-|w'|}\Big(\int_{[0,1]^{|w'|}}|D^\lambda_{u\cup w'} g (\bst_{ w'};\bsx_u)|^2\rd\bst_{w'}\Big)^{1/2}\Big(\int_{[0,1]^{|w'|}}\prod_{j\in w'} (1-t_j)^{2\lambda}\rd\bst_{w'}\Big)^{1/2}\\
    = & \sum_{w'\subseteq u^c}\Gamma(\lambda+1)^{-|w'|}(2\lambda+1)^{-|w'|/2}\Big(\int_{[0,1]^{|w'|}}|D^\lambda_{u\cup w'} g (\bst_{ w'};\bsx_u)|^2\rd\bst_{w'}\Big)^{1/2}
\end{align*}
where we have used $\Gamma(\lambda+1)=\lambda\Gamma(\lambda)$ again. Applying the Cauchy–Schwarz inequality, we get
\begin{align*}
    &|D^\lambda_uI_u(g\rho_{u^c})(\bsx_u)|^2\\
    \leq  & \Big(\sum_{w'\subseteq u^c}\Gamma(\lambda+1)^{-2|w'|}(2\lambda+1)^{-|w'|}\Big)\Big(\sum_{w'\subseteq u^c}\int_{[0,1]^{|w'|}}|D^\lambda_{u\cup w'} g (\bst_{ w'};\bsx_u)|^2\rd\bst_{w'}\Big)\\
    =& (1+\Gamma(\lambda+1)^{-2}(2\lambda+1)^{-1})^{|u^c|}\sum_{w'\subseteq u^c}\int_{[0,1]^{|w'|}}|D^\lambda_{u\cup w'} g (\bst_{ w'};\bsx_u)|^2\rd\bst_{w'}.
\end{align*}
Therefore, after taking the supremum over $\rho_{u^c}\in D_{u^c}$
\begin{align*}
&\Vert g \Vert^2_{\lambda,u}\\
=&\sup_{\rho_{u^c}\in D_{u^c}} \Vert D^\lambda_u I_u(g\rho_{u^c})\Vert^2_{L^2([0,1]^{|u|})}\\
   \leq&(1+\Gamma(\lambda+1)^{-2}(2\lambda+1)^{-1})^{|u^c|}\sum_{w'\subseteq u^c}\int_{[0,1]^{|u|}}\int_{[0,1]^{|w'|}}|D^\lambda_{u\cup w'} g (\bst_{ w'};\bsx_u)|^2 \rd \bst_{w'} \rd \bsx_u \\
   =& (1+\Gamma(\lambda+1)^{-2}(2\lambda+1)^{-1})^{|u^c|}\sum_{w'\subseteq u^c}\Vert D^\lambda_{u\cup w'}g \Vert^2_{L^2[0,1]^{|u\cup w'|}}\\
   \leq & (1+\Gamma(\lambda+1)^{-2}(2\lambda+1)^{-1})^{|u^c|} \Vert g\Vert^2_{\lambda,s}.
\end{align*}
Putting the above bound into equation~\eqref{eqn:corfalambda}, we finally get
\begin{align*}
    &\sum_{\bsk\in B_{\alpha,\bsell,s}}|\hat{f}(\bsk)|^2\\
    \leq &C^{2|u|}_{\lambda} 4^{-(\alpha+\lambda)\Vert\bsell\Vert_1}(1+\Gamma(\lambda+1)^{-2}(2\lambda+1)^{-1})^{|u^c|}\sum_{\alpha_j\in \ints_{\le\alpha}, j\in u^c} \Vert f^{(\bsalpha)} \Vert^2_{\lambda,s}\\
    \leq & C^{2s}_{\lambda} 4^{-(\alpha+\lambda)\Vert\bsell\Vert_1} \Vert f\Vert^2_{\alpha,\lambda}
\end{align*}
where we have used the fact that $\Gamma(\lambda+1)\leq 1$ for $0<\lambda\leq 1$ and hence
\begin{align*}
   C^2_\lambda&\geq 2\Gamma(\lambda+1)^{-2} > 1+\Gamma(\lambda+1)^{-2}(2\lambda+1)^{-1}.\qedhere
\end{align*}
\end{proof}

\begin{theorem}\label{thm:rateHp}
    Suppose $ \Vert f\Vert_{\alpha,\lambda}<\infty$ for $\alpha\in \natu_0$ and $0<\lambda\le 1$. When $r\geq m$, we have $\e((\hat{\mu}^{(r)}_{\infty}-\mu)^2)=O(n^{-2\alpha-2\lambda-1+\epsilon})$ for any $\epsilon>0$.
\end{theorem}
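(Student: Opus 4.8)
The plan is to run the proof of Theorem~\ref{thm:rateVlambda} almost verbatim, with Corollary~\ref{cor:fkboundHp} playing the role that Corollary~\ref{cor:fkboundVlambda} played there. Concretely, I would take the same event $\mathcal{A}=\{Z(\bsk)=1\text{ for some }\bsk\in K\}$ with
$$K=\{\bsk\in\natu_*^s\mid \Vert\lceil\bsk\rceil_{\alpha+1}\Vert_1\le m-(\alpha+1)s\log_2(m),\ \max\lceil\kappa_j\rceil\le(\alpha+\lambda+1)m\},$$
since Lemma~\ref{lem:smallpVlambda} bounds $\Pr(\mathcal{A})=O(1/m)$ using only this choice of $K$ and the asymptotic full-rank property, not any function-space assumption. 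Applying Lemma~\ref{lem:medianMSE} with $\delta=C/m$ for a suitably large constant $C$, the term $(8\delta)^r\Delta_m^2$ is negligible once $r\ge m$ because $\Delta_m\le 2\Vert f\Vert_\infty<\infty$ (our standing assumption $f\in C([0,1]^s)$), so everything reduces to showing
$$\sum_{\bsk\in\natu_*^s\setminus K}\Pr(Z(\bsk)=1)\,|\hat f(\bsk)|^2=O(n^{-2\alpha-2\lambda-1+\epsilon}).$$

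By Lemma~\ref{lem:PrZkbound} the left side is at most $2^{-m+R}\sum_{\bsk\notin K}|\hat f(\bsk)|^2$, and I would split $\sum_{\bsk\notin K}$ exactly as in lines~\eqref{eqn:firstsum}--\eqref{eqn:secondsum} into (i) the $\bsk$ with $\Vert\lceil\bsk\rceil_{\alpha+1}\Vert_1>m-(\alpha+1)s\log_2(m)$ and (ii) the $\bsk$ with $\Vert\lceil\bsk\rceil_{\alpha+1}\Vert_1\le m-(\alpha+1)s\log_2(m)$ but $\max\lceil\kappa_j\rceil>(\alpha+\lambda+1)m$. For (i), group by $\bsell=\lceil\bsk\rceil_{\alpha+1}$, use Corollary~\ref{cor:fkboundHp} to get $\sum_{\bsk\in B_{\alpha,\bsell,s}}|\hat f(\bsk)|^2\le C_\lambda^{2s}\Vert f\Vert_{\alpha,\lambda}^2\,4^{-(\alpha+\lambda)\Vert\bsell\Vert_1}$, and sum the resulting tail series over $\Vert\bsell\Vert_1>m-(\alpha+1)s\log_2(m)$ by the same polynomially-weighted geometric-series estimate used in~\eqref{eqn:epilsonrate}; this produces $O(2^{-(2\alpha+2\lambda-\epsilon)m})$. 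For (ii), the treatment is identical to Theorem~\ref{thm:rateVlambda}: the sum is empty when $\alpha=0$ since $\Vert\lceil\bsk\rceil_1\Vert_1\ge\max\lceil\kappa_j\rceil$, and when $\alpha\ge1$ one bounds $|\hat f(\bsk)|$ by Lemma~\ref{lem:fkboundlealpha} and then uses the cardinality estimate~\eqref{eqn:Tbound} on $K_T$ together with $\sum_{T>(\alpha+\lambda+1)m}T^{\alpha s}4^{-T}=O(2^{-2(1-\delta)(\alpha+\lambda+1)m})$ for small $\delta$ to make this part $o(2^{-(2\alpha+2\lambda)m})$.

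Combining (i) and (ii) yields $\sum_{\bsk\notin K}\Pr(Z(\bsk)=1)|\hat f(\bsk)|^2\le 2^{-m+R}O(2^{-(2\alpha+2\lambda-\epsilon)m})=O(n^{-2\alpha-2\lambda-1+\epsilon})$; feeding this into Lemma~\ref{lem:medianMSE} and absorbing the harmless $\delta^{-1}=O(m)$ factor into $\epsilon$ finishes the argument. The real content has already been done upstream, in Theorem~\ref{thm:fkboundHp} and Corollary~\ref{cor:fkboundHp}: the main obstacle there is to rewrite the finite difference $\Delta(g,\bsx+J_{\bsell})$ of $g=f^{(\alpha,\dots,\alpha)}$, on the interior block $\prod_{j=1}^s[0,1-2^{-\ell_j})$ where~\eqref{eqn:Df} is usable, as a convolution of the fractional derivative $D^\lambda_{1{:}s}g$ with the explicit kernel $F_{\lambda,\bsell}$, and then to invoke Young's inequality with exponents $(2,1,2)$ --- the delicate steps being the exact evaluation of $\Vert F_{\lambda,\bsell}\Vert_{L^1}$ and the observation that the $L^2$-based norm on $\mathcal{H}_{\lambda,s,p}$, $p\ge 2$, is precisely what licenses the $(2,1,2)$ split. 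The only genuine deviation from Subsection~\ref{subsec:Vlambda} inside the theorem proof itself is that for $\lambda\le 1/2$ the derivatives $\partial f/\partial x_j$ and $f^{(\alpha,\dots,\alpha)}$ need not be continuous, so Lemma~\ref{lem:fkboundlealpha} and Lemma~\ref{lem:exactfk} must be read in the weak sense of Remark~\ref{rmk:rateVlambda}, where~\eqref{eqn:exactfk} still holds because the relevant (fractional) derivative lies in $L^1$ and integration by parts remains valid.
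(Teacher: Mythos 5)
Your proposal takes essentially the same route as the paper: it swaps Corollary~\ref{cor:fkboundVlambda} for Corollary~\ref{cor:fkboundHp} and otherwise runs the proof of Theorem~\ref{thm:rateVlambda} verbatim, which is exactly what the paper does (the paper's proof is literally a two-sentence pointer to this substitution). The structure you lay out --- same $K$, same $\mathcal{A}$, Lemma~\ref{lem:smallpVlambda}, Lemma~\ref{lem:medianMSE} with $\delta = C/m$, the split into sums~\eqref{eqn:firstsum}--\eqref{eqn:secondsum}, Corollary~\ref{cor:fkboundHp} for (i), Lemma~\ref{lem:fkboundlealpha} for (ii) --- is all correct.

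The one place you diverge from the paper is the final caveat: you claim that for $\lambda\le 1/2$ the derivatives $\partial f/\partial x_j$ and $f^{(\alpha,\dots,\alpha)}$ ``need not be continuous'' so Lemmas~\ref{lem:exactfk} and~\ref{lem:fkboundlealpha} must be invoked in the weak sense of Remark~\ref{rmk:rateVlambda}. This is not needed and rests on a misreading of the hypothesis. Writing $\Vert f\Vert_{\alpha,\lambda}<\infty$ presupposes $f^{(\bsalpha)}\in\mathcal{H}_{\lambda,s,p}$ for all $\bsalpha\in\ints_{\le\alpha}^s$ for some $p\geq 2$, and the definition of $\mathcal{H}_{\lambda,s,p}$ already carries the constraint $p>\lambda^{-1}$ (otherwise the kernel $(x-t)_+^{\lambda-1}$ in equation~\eqref{eqn:Df} is not integrable against an $L^p$ density and the representation is vacuous). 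Under that constraint, Lemma~8 of the cited reference gives Hölder continuity of $f^{(\bsalpha)}$ with positive exponent $\lambda-p^{-1}$, so $f\in C^{(\alpha,\dots,\alpha)}([0,1]^s)$ for free, and Lemmas~\ref{lem:exactfk} and~\ref{lem:fkboundlealpha} apply in their strong form. That is precisely the one sentence of justification the paper adds: ``Lemma~\ref{lem:fkboundlealpha} still applies when $\alpha\geq 1$ because $f^{(\bsalpha)}\in\mathcal{H}_{\lambda,s,p}$ for $\bsalpha\in\ints^s_{\le\alpha}$ implies $f\in C^{(\alpha,\dots,\alpha)}([0,1]^s)$.'' Your weak-sense detour would still reach the right conclusion, but it is solving a problem that the hypothesis has already ruled out.
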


\begin{proof}
    The proof is the same as that of Theorem~\ref{thm:rateVlambda} except we use Corollary~\ref{cor:fkboundHp} in place of Corollary~\ref{cor:fkboundVlambda}. Notice that Lemma~\ref{lem:fkboundlealpha} still applies when $\alpha\geq 1$ because $f^{(\bsalpha)}\in \mathcal{H}_{\lambda,s,p}$ for $\bsalpha\in \ints^s_{\le \alpha}$ implies $f\in C^{(\alpha,\dots,\alpha)}([0,1]^s)$.
\end{proof}

\begin{corollary}\label{cor:rateHp}
    Suppose $f^{(\bsalpha)}\in \mathcal{H}_{\lambda,s,p}$ for all $\bsalpha\in \ints_{\le\alpha}^{s}$ with $\alpha\in \natu_0$, $p\in [1,2)$ and $\lambda\in (1/p-1/2, 1]$. When  $r\geq m$, we have $\e((\hat{\mu}^{(r)}_{\infty}-\mu)^2)=O(n^{-2\alpha-2\lambda-2+2p^{-1}+\epsilon})$ for any $\epsilon>0$.
\end{corollary}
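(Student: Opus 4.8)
The plan is to obtain this corollary as a direct composition of two results already in hand: the embedding Corollary~\ref{cor:plessthan2} and the rate Theorem~\ref{thm:rateHp}. First I would invoke Corollary~\ref{cor:plessthan2}: since $f^{(\bsalpha)}\in\mathcal{H}_{\lambda,s,p}$ for all $\bsalpha\in\ints_{\le\alpha}^s$ with $p\in[1,2)$ and $\lambda\in(1/p-1/2,1]$, we get $\Vert f\Vert_{\alpha,\lambda'}<\infty$ for every $\lambda'\in(0,\lambda-1/p+1/2)$. The hypothesis $\lambda>1/p-1/2$ is exactly what makes this interval of admissible $\lambda'$ nonempty, and one checks that $\lambda'<\lambda-1/p+1/2\le 3/2-1/p\le 1$ for $p\in[1,2)$, so any such $\lambda'$ automatically lies in the range $(0,1]$ demanded by Theorem~\ref{thm:rateHp}.

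Next I would apply Theorem~\ref{thm:rateHp} with this $\lambda'$ (and with $r\ge m$), which yields $\e((\hat\mu^{(r)}_\infty-\mu)^2)=O(n^{-2\alpha-2\lambda'-1+\epsilon'})$ for every $\epsilon'>0$. It then only remains to track the $\epsilon$'s: given $\epsilon>0$, choose $\lambda'$ close enough to $\lambda-1/p+1/2$ from below that $2\lambda'>2\lambda-2/p+1-\epsilon/2$, and take $\epsilon'=\epsilon/2$ in the theorem. Substituting, $-2\alpha-2\lambda'-1+\epsilon'<-2\alpha-2\lambda-2+2p^{-1}+\epsilon$, so the bound becomes $O(n^{-2\alpha-2\lambda-2+2p^{-1}+\epsilon})$, which is the claim.

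There is no real obstacle here beyond this bookkeeping; the content is entirely carried by Corollary~\ref{cor:plessthan2} (which in turn rests on the Young-inequality argument of Lemma~\ref{lem:increasep}) and Theorem~\ref{thm:rateHp}. The only points worth a second glance are that the chosen $\lambda'$ stays admissible for Theorem~\ref{thm:rateHp} (automatic, as noted above) and that the loss of $1-p^{-1}$ in the RMSE exponent relative to the $p\ge 2$ case of Theorem~\ref{thm:rateHp} is precisely the gap produced by passing from $L^p$ to $L^2$ smoothness in Lemma~\ref{lem:increasep}.
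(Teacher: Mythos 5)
Your proof is correct and is essentially identical to the paper's: invoke Corollary~\ref{cor:plessthan2} to pass to $\Vert f\Vert_{\alpha,\lambda'}<\infty$ with $\lambda'$ just below $\lambda-1/p+1/2$, then apply Theorem~\ref{thm:rateHp}. The additional checks you do (that $\lambda'\in(0,1]$, and the explicit $\epsilon$ bookkeeping) are sound and merely spell out what the paper leaves implicit.
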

\begin{proof}
    By Corollary~\ref{cor:plessthan2}, $ \Vert f\Vert_{\alpha,\lambda'}<\infty$ with $\lambda'=\lambda-1/p+1/2-\epsilon$ for an arbitrarily small $\epsilon>0$. The result immediately follows by applying Theorem~\ref{thm:rateHp}.
\end{proof}

\begin{remark}
    Our results here can be used to analyze $f$ satisfying Owen's boundary growth condition \cite{haltavoid}, which assumes for all $\bsalpha\in \{0,1\}^s$, $f^{(\bsalpha)}$ is continuous over the open box $(0,1)^s$ and 
    \begin{equation}\label{eqn:Owengrowth}
        |f^{(\bsalpha)}(\bsx)|\leq B \prod_{j=1}^s\phi(x_j)^{-A^*-\alpha_j} 
    \end{equation}
    with $\phi(x)=\min(x,1-x)$, $A^*\in (-1/2,1/2)$ and $B>0$. To simplify our notation, we will write $\partial^u f$ to indicate $f^{(\bsalpha^u)}$ with $\bsalpha^u_j=1$ if $j\in u$ and $\bsalpha^u_j=0$ if $j\notin u$.

    To apply our results, first we change the anchor point to the box center $(1/2,\dots,1/2)$ and analyze each of the $2^s$ orthants separately. As an example, for the orthant $[1/2,1)^s$ and $\bsy\in [0,1)^s$ we define 
    $$\psi(\bsy)=\Big(\frac{1+y_1}{2},\dots,\frac{1+y_s}{2}\Big)$$
    and $f_\psi(\bsy)=f(\psi(\bsy))$. Clearly $\partial^u f_\psi(\bsy)$ can be defined over $\bsy\in [0,1)^s$ using the chain rule. In order to show $f_\psi\in \mathcal{H}_{\lambda^*,s,2} $ with $\lambda^*\in (0,1/2-A^*)$, we use equation~\eqref{eqn:changelambda} with $\lambda=1$ and $\lambda'=\lambda^*$ to define for $u\subseteq 1{:}s$
    $$D_u^{\lambda^*} f_\psi (\bsy_u)=\Gamma(1-\lambda^*)^{-|u|} \int_{[0,1)^{|u|}} \partial^u f_\psi\Big((\bst_u,\bszero_{u^c})\Big)\prod_{j\in u} (y_j-t_j)_+^{-\lambda^*}\rd \bst_u$$
    where $(\bst_u,\bszero_{u^c})$ is the $s$-dimensional vector whose $j$-th component equals $t_j$ if $j\in u$ and equals zero otherwise. Notice that although $\partial^u f_\psi$ might not belong to $L^1([0,1]^s)$ due to its boundary growth, it is continuous over $[0,1)^s$ and a calculation similar to what we did in Lemma~\ref{lem:increasep} shows equation~\eqref{eqn:Df} is satisfied for $f_\psi(\bsy), \bsy\in [0,1)^s$ with the above $D_u^{\lambda^*} f_\psi, u\subseteq 1{:}s$. To see $D_u^{\lambda^*} f_\psi$ has a finite $L^2-$norm, we let $\delta=1/2-A^*-\lambda^*$ and use equation~\eqref{eqn:Owengrowth} and the chain rule to bound 
    \begin{align*}
       &|D_u^{\lambda^*} f_\psi (\bsy_u)|\\
       \leq &\Gamma(1-\lambda^*)^{-|u|} \int_{[0,1)^{|u|}} \Big(\frac{B}{2^{|u|-(s-|u|)A^*}} \prod_{j\in u} \Big(\frac{1-t_j}{2}\Big)^{-A^*-1}\Big)\prod_{j\in u} (y_j-t_j)_+^{-\lambda^*}\rd \bst_u \\
       = & \frac{2^{sA^*}B}{\Gamma(1-\lambda^*)^{|u|}}\int_{[0,1)^{|u|}}  \prod_{j\in u} (1-t_j)^{-A^*-1} (y_j-t_j)_+^{-1/2+A^*+\delta}\rd \bst_u \\
       \leq & \frac{2^{sA^*}B}{\Gamma(1-\lambda^*)^{|u|}} \Big( \prod_{j\in u} (1-y_j)^{-1/2+\delta/2} \int_{0}^{y_j}  (y_j-t_j)^{-1+\delta/2}\rd t_j \Big)\\
       \leq & \frac{2^{sA^*+|u|}B}{(\delta\Gamma(1-\lambda^*))^{|u|}} \Big( \prod_{j\in u} (1-y_j)\Big)^{-1/2+\delta/2},
    \end{align*}
    where we have used $\delta>0$ because $\lambda^*<1/2-A^*$. Hence, $|D_u^{\lambda^*} f_\psi (\bsy_u)|^2$ is integrable over $[0,1]^{|u|}$ and we prove $f_\psi\in \mathcal{H}_{\lambda^*,s,2} $. After analyzing all of the $2^s$ orthants in a similar manner, we can establish a counterpart of Theorem~\ref{thm:fkboundHp} with $\lambda=\lambda^*$, which eventually leads to $\e((\hat{\mu}^{(r)}_{\infty}-\mu)^2)=O(n^{-2+2A^*+\epsilon})$ for any $\epsilon>0$. We do not elaborate out the details here given a similar analysis has been done in \cite{liu2025randomizedquasimontecarloowens}. It is also straightforward to generalize our analysis to the $\alpha\geq 1$ case with $f^{(\alpha,\dots,\alpha)}$ satisfying the Owen's boundary growth condition.
\end{remark}

\section{Numerical experiment}\label{sec:exp}

Below we will test our algorithm on a list of test functions. Each function will be integrated by the median of QMC estimators with $r=m,E=64$ and two types of randomization, one with completely random designs and one with random linear scrambling. For comparison, the same function will also be integrated by the higher order scrambled digital nets in base 2 of order $1,2$ and $3$ from \cite{dick:2011}. They use Owen's scrambling of order $1,2$ and $3$, respectively. To equalize the computational cost, each higher order digital nets estimator is the average of $2m-1$ independent runs. The pre-designed generating matrices $\mathcal{C}_j$ used in the random linear scrambling and Owen's scrambling are from Sobol's construction with direction numbers from \cite{joe:kuo:2008}. We will estimate the RMSE of each method for $m=1,\dots,16$ by $300$ trials. In the forthcoming plots, medians with completely random designs and random linear scrambling are abbreviated as ``Median CRD" and ``Median RLS", respectively. Scrambled digital nets of order 1,2 and 3 are abbreviated as ``Order 1 DN", ``Order 2 DN" and ``Order 3 DN", respectively.

\subsection{One-dimensional $f$ with discontinuous derivatives}\label{subsec:discont}
We first look at one-dimensional test functions with a point of discontinuity in their derivatives. The experiment uses
$$f_{\alpha^*}(x)=\begin{cases}
    (1-3x)^{\alpha^*} & \text{ if } 0\le x\leq 1/3,\\
   2^{-\alpha^*} ({3x-1})^{\alpha^*} &\text{ if } 1/3< x\le 1,
    \end{cases}$$
for $\alpha^*=1,2$ and $3$. The results are shown in Figure~\ref{fig:alpha}. 

\begin{figure}
    \centering
    \subfloat{\includegraphics[width=0.7\textwidth]{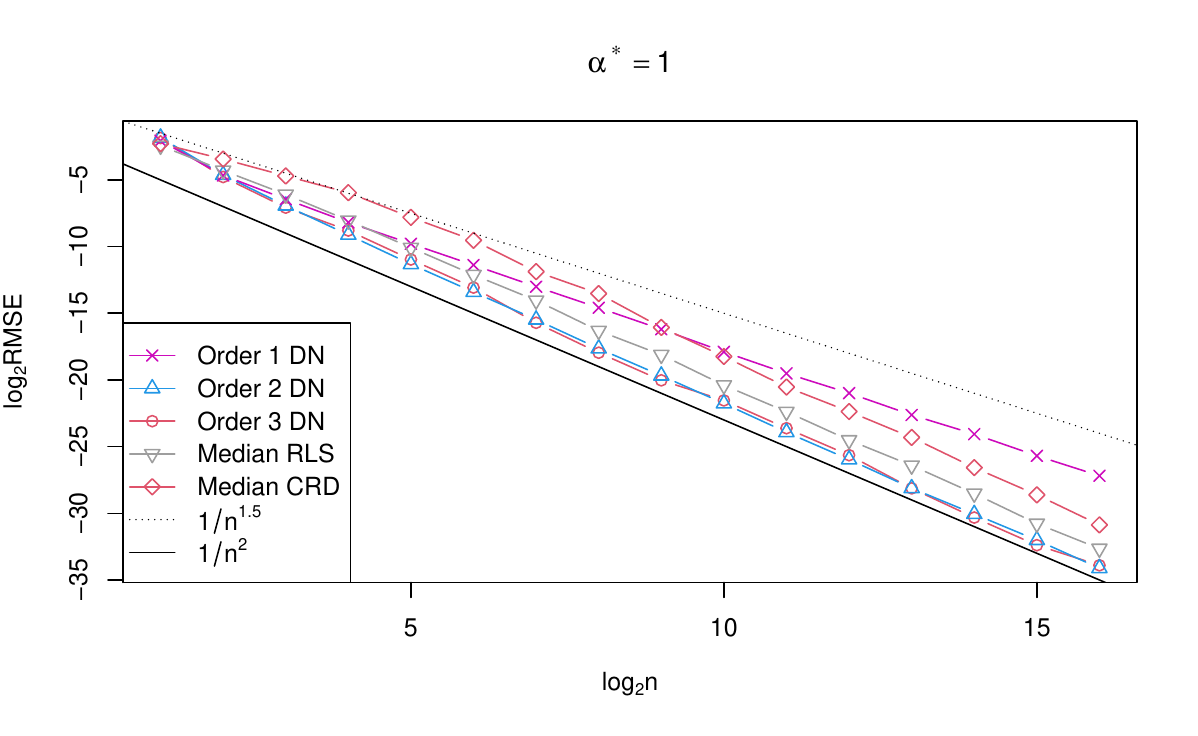}}
    
    \medskip
    \subfloat{\includegraphics[width=0.7\textwidth]{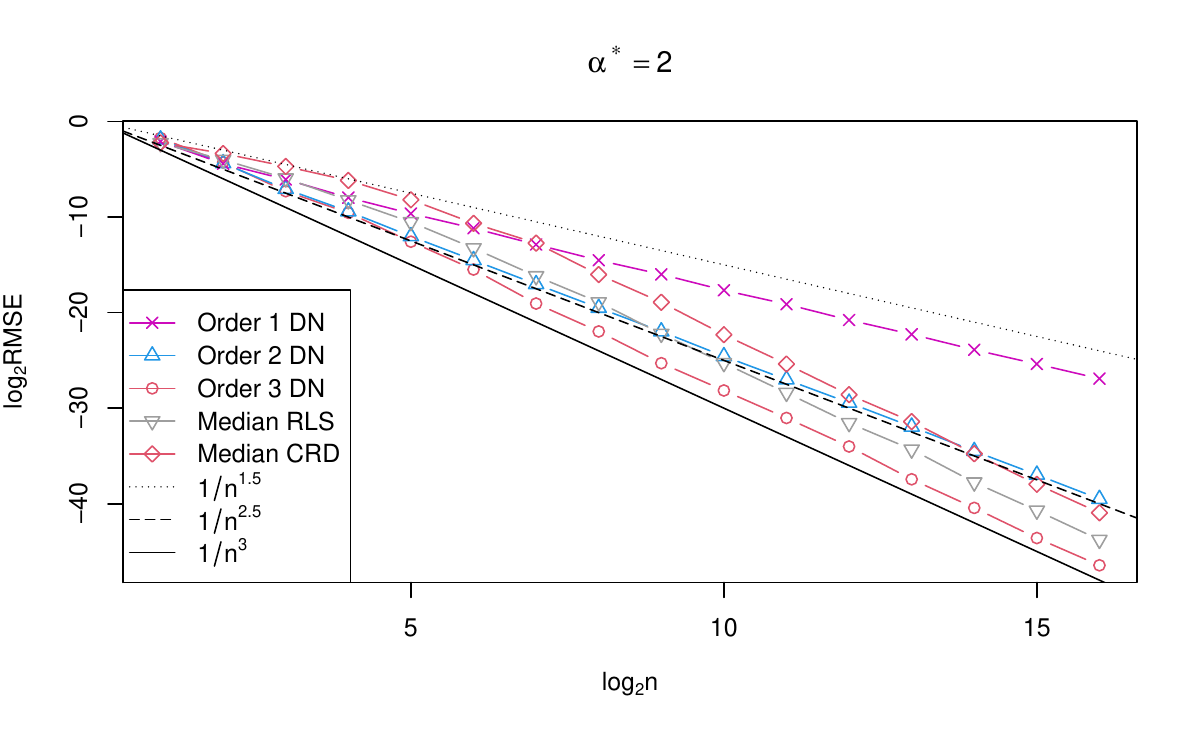}}

    \medskip
    \subfloat{\includegraphics[width=0.7\textwidth]{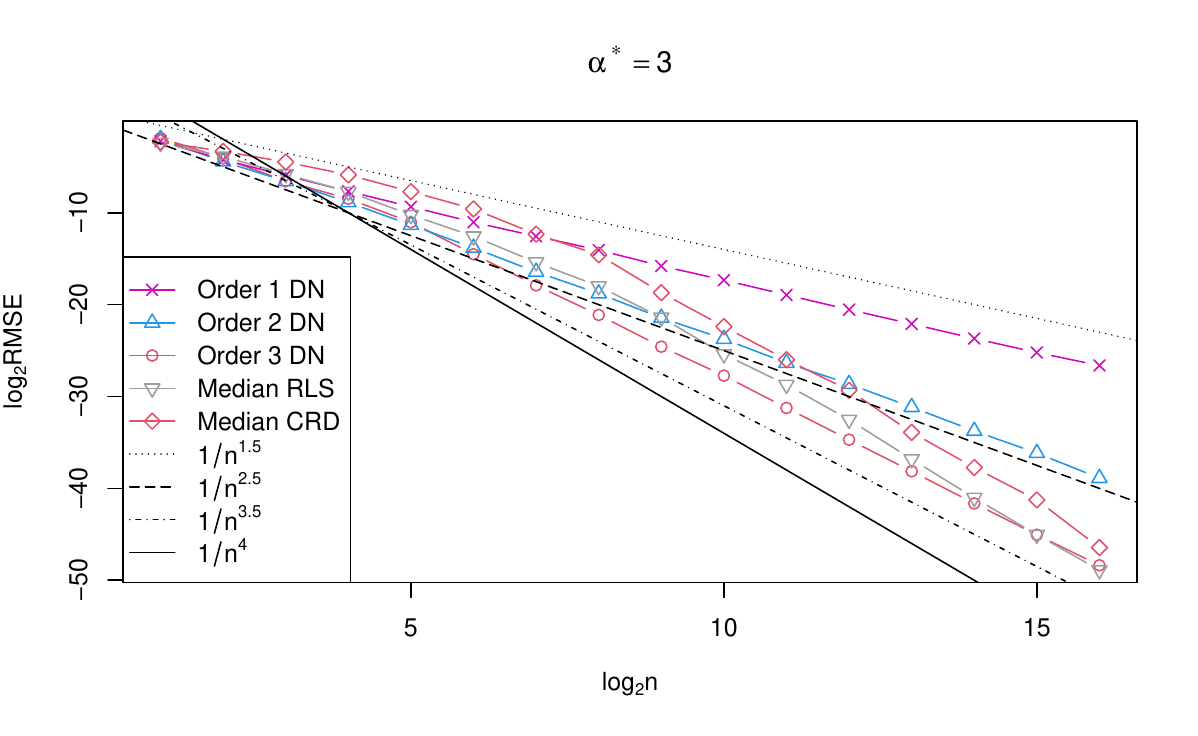}}
    \caption{Comparison of medians with two types of randomization and scrambled digital nets of order $1,2$ and $3$. The test functions are $f_{\alpha^*}$ for $\alpha^*=1$ (top), $\alpha^*=2$ (middle) and $\alpha^*=3$ (bottom). The theory predicts the approximate convergence rate is $O(n^{-\alpha-1})$ for medians with complete random designs ($\diamondsuit$) and medians with random linear scrambling ($\triangledown$), $O(n^{-1.5})$ for digital nets of order $1$ ($\times$), $O(n^{-\min(2.5,\alpha^*+1)})$ for digital nets of order 2 ($\triangle$) and $O(n^{-\min(3.5,\alpha^*+1)})$ for digital nets of order 3 ($\Circle$). }
    \label{fig:alpha}
\end{figure}

Naively, one would think to apply Theorem~\ref{thm:rateHp} with $\alpha=\alpha^*-1$ and $\lambda=1$ because $f^{(\alpha^*)}$ is only defined weakly due to a discontinuity at $x=1/3$. The implied RMSE convergence rate is $O(n^{-\alpha^*-1/2+\epsilon})$. However, the simulation shows the optimal convergence rate is actually $O(n^{-\alpha^*-1+\epsilon})$, which corresponds to Theorem~\ref{thm:rateVlambda} with $\alpha=\alpha^*$ and $\lambda=1/2$. Theorem~\ref{thm:rateVlambda} is applicable here because equation~\eqref{eqn:exactfk} holds despite the discontinuity at $x=1/3$. As a brief proof, we first apply Lemma~\ref{lem:exactfk} with $\alpha=\alpha^*-1$ and get for $k$ with $|\kappa|\ge \alpha^*$
\begin{equation*}
\hat{f}(k)=(-1)^{\alpha^*-1}\int_{[0,1)}f^{(\alpha^*-1)}(x) \walkappa{\kappa\setminus \lceil\kappa\rceil_{1{:}\alpha^*}} (x)(-1)^{\vec{x}(\lceil\kappa\rceil_{\alpha^*})}W_{\lceil\kappa\rceil_{1{:}\alpha^*-1}}(x)\rd x.
\end{equation*} 
Let us denote $\lceil\kappa\rceil_{\alpha^*}$ as $\ell$ for simpilicity.  
Over interval $\mathrm{EI}(\ell-1,a)=[a 2^{-\ell+1},(a+1)2^{-\ell+1})$ with $a\in \ints_{<2^{\ell-1}}$, $\walkappa{\kappa\setminus \lceil\kappa\rceil_{1{:}\alpha^*}}$ is constant and 
\begin{align*}
   &\int_{\mathrm{EI}(\ell-1,a)}f^{(\alpha^*-1)}(x) \walkappa{\kappa\setminus \lceil\kappa\rceil_{1{:}\alpha^*}} (x)(-1)^{\vec{x}(\ell)}W_{\lceil\kappa\rceil_{1{:}\alpha^*-1}}(x)\rd x \\
   =& \walkappa{\kappa\setminus \lceil\kappa\rceil_{1{:}\alpha^*}} (a 2^{-\ell+1}) \int_{\mathrm{EI}(\ell-1,a)}f^{(\alpha^*-1)}(x) \rd W_{\lceil\kappa\rceil_{1{:}\alpha^*}}(x)
\end{align*}
where we have applied the definition of $W_{\lceil\kappa\rceil_{1{:}\alpha^*}}$ from equation~\eqref{eqn:Wkdef}. Because $f^{(\alpha^*)}\in L^\infty([0,1])$, $f^{(\alpha^*-1)}$ is absolutely continuous even over the interval that contains $x=1/3$. Hence we can apply integration by part and get
$$\int_{\mathrm{EI}(\ell-1,a)}f^{(\alpha^*-1)}(x) \rd W_{\lceil\kappa\rceil_{1{:}\alpha^*}}(x)=-\int_{\mathrm{EI}(\ell-1,a)}f^{(\alpha^*)}(x)  W_{\lceil\kappa\rceil_{1{:}\alpha^*}}(x)\rd x$$
where we have used the periodicity of $W_{\lceil\kappa\rceil_{1{:}\alpha^*}}$ to compute $W_{\lceil\kappa\rceil_{1{:}\alpha^*}}(a 2^{-\ell+1})=W_{\lceil\kappa\rceil_{1{:}\alpha^*}}((a+1) 2^{-\ell+1})=W_{\lceil\kappa\rceil_{1{:}\alpha^*}}(0)=0$. Equation~\eqref{eqn:exactfk} with $\alpha=\alpha^*$ then follows once we perform the integration by part over each $\mathrm{EI}(\ell-1,a)$ and add up the resulting integral over $a\in \ints_{<2^{\ell-1}}$. Consequently, as explained in Remark~\ref{rmk:rateVlambda}, we can apply Theorem~\ref{thm:rateVlambda} with $\alpha=\alpha^*$ even though $f^{(\alpha^*)}\notin C([0,1])$.

As we have seen above, it is not always trivial to determine the right function space to analyze the integrand. Practical integrands may be too complicated to analyze or even be defined only in black boxes. A big advantage of taking the median is that no such analysis is needed before we implement the algorithm because it automatically converges at the optimal rate. One may even estimate the optimal rate by carrying out the algorithm and determine which function space to use in a retrospective manner.

\subsection{20-dimensional $f$ with decaying variable importance}

Next we test the performance of the five QMC methods on high-dimensional integrands. The test functions are from Example 4.4 of \cite{Goda2024}:

$$ f_c(\bsx)=\prod_{j=1}^{s}\left[ 1+\frac{1}{\exp(\lceil c\rceil j)}\left(x_j^{c}-\frac{1}{1+c}\right)\right]$$
for $s=20$ and $c=0.5,1.5$ and $2.5$. The results are shown in Figure~\ref{fig:c}.

\begin{figure}
    \centering
    \subfloat{\includegraphics[width=0.7\textwidth]{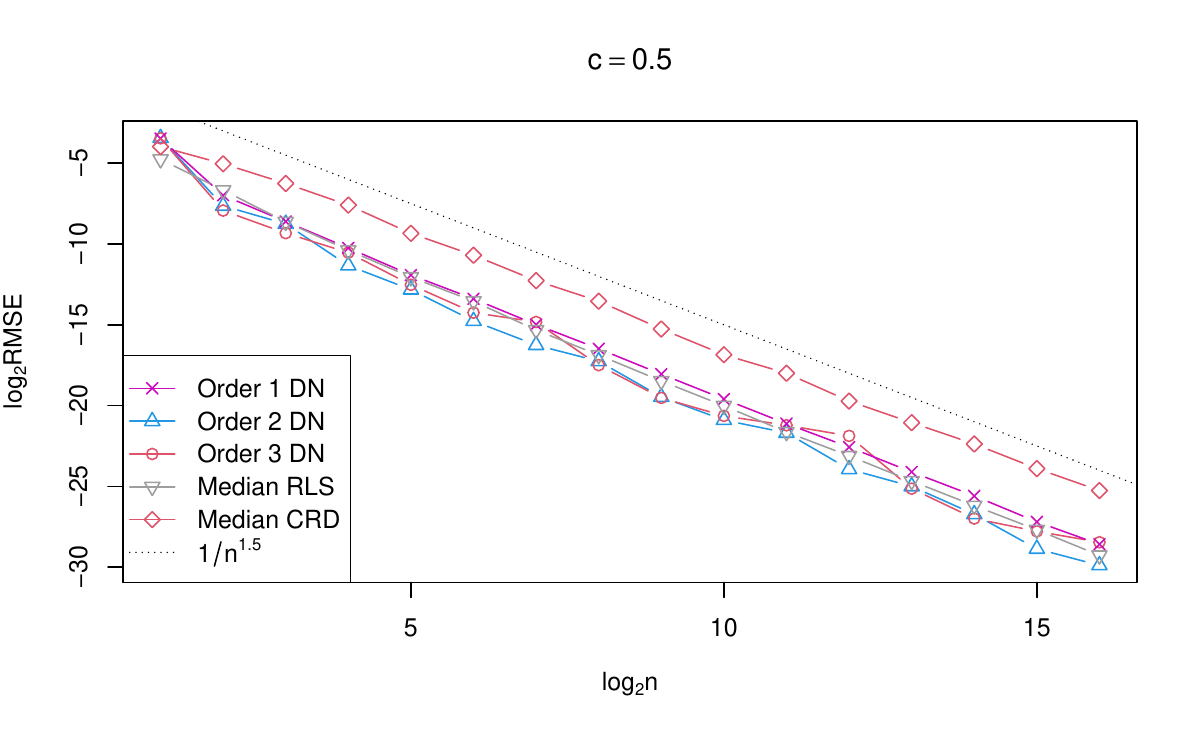}}

    \medskip
    \subfloat{\includegraphics[width=0.7\textwidth]{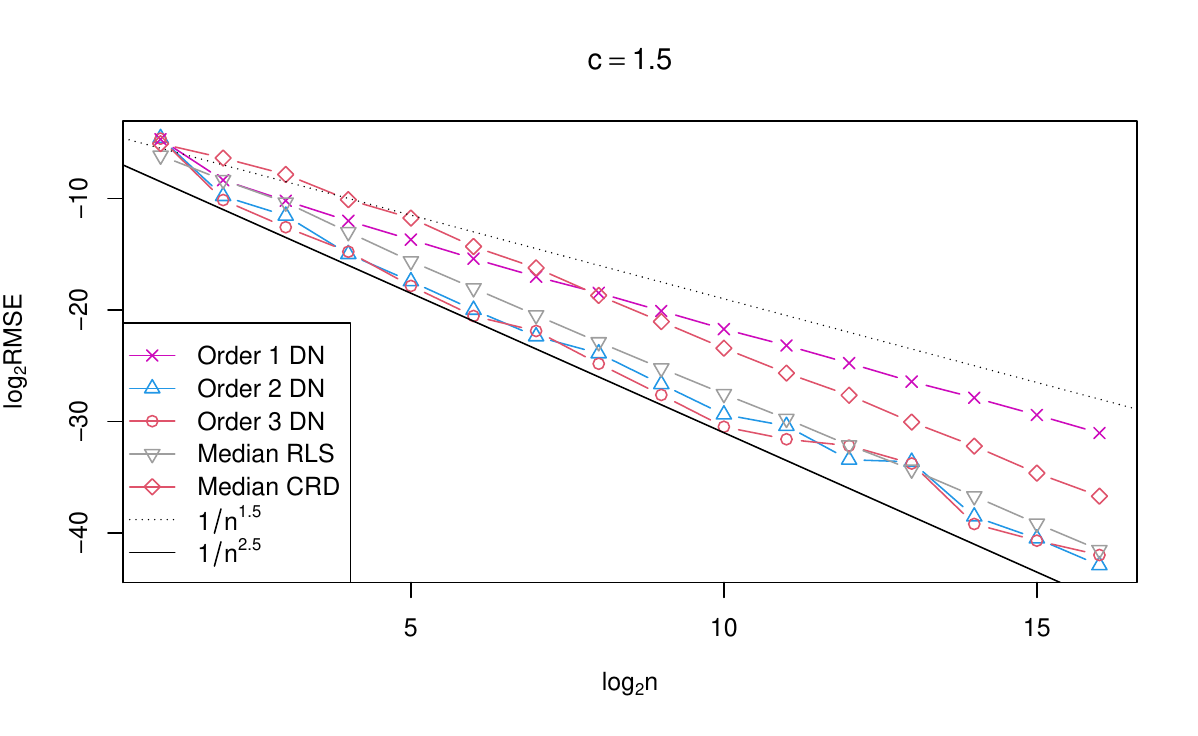}}

    \medskip
    \subfloat{\includegraphics[width=0.7\textwidth]{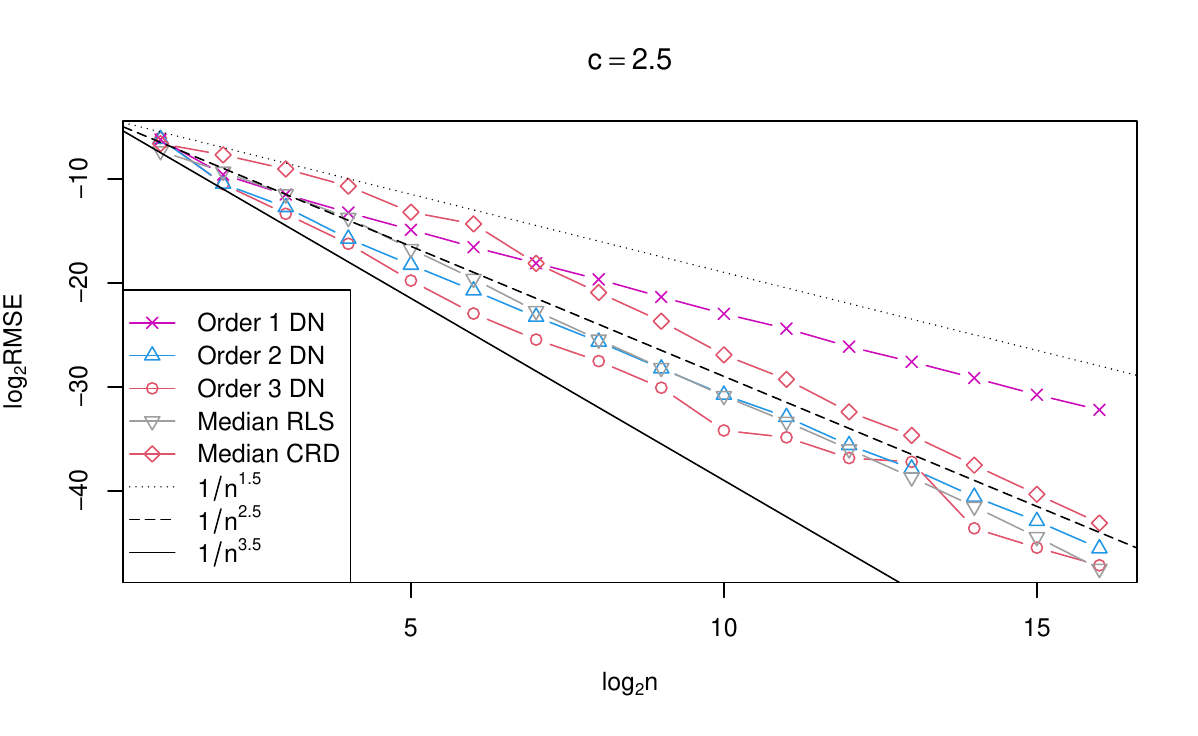}}
    \caption{Comparison of medians with two types of randomization and scrambled digital nets of order $1,2$ and $3$. The test functions are $f_{c}$ for $c=0.5$ (top), $c=1.5$ (middle) and $c=2.5$ (bottom). The theory predicts the approximate convergence rate is $O(n^{-\lceil c\rceil-0.5})$ for medians with complete random designs ($\diamondsuit$) and medians with random linear scrambling ($\triangledown$), $O(n^{-1.5})$ for digital nets of order $1$ ($\times$), $O(n^{-\min(2.5,\lceil c\rceil+0.5)})$ for digital nets of order 2 ($\triangle$) and $O(n^{-\min(3.5,\lceil c\rceil+0.5)})$ for digital nets of order 3 ($\Circle$). }
    \label{fig:c}
\end{figure}

The ANOVA decomposition of $f_c$ can be easily computed because $\prod_{j\in u}(x^c_j-1/(1+c))$ for subsets $u\subseteq 1{:}s$ are mutually $L^2$-orthogonal. The mean dimension of $f_c$, following the definition from \cite{meandim}, is given by 
$$\Big(\prod_{j=1}^s \Big(1+\frac{I_c}{\exp(2\lceil c\rceil j)}\Big)-1\Big)^{-1}\sum_{u\subseteq 1{:}s} |u|\prod_{j\in u} \frac{I_c}{\exp(2\lceil c\rceil j)}$$
where 
$$I_c=\int_{[0,1]}\Big(x^c-\frac{1}{1+c}\Big)^2\rd x=\frac{c^2}{(1+c)^2(1+2c)}.$$
For $s=20$, the above mean dimension is approximately $1+1.04\times 10^{-3}$ for $c=0.5$, $1+3.02\times 10^{-5}$ for $c=1.5$ and $1+5.22\times 10^{-7}$ for $c=2.5$, so $f_c$ is very close to a sum of one-dimensional functions. In particular, the first coordinate $\exp(-\lceil c\rceil )(x^c_1-1/(c+1))$ alone explains more than $80\%$ of the variation.

It is straightforward to verify that $f_c^{(\bsalpha)}\in \mathcal{H}_{1,s,p}$ for $1<p<2$ and $\bsalpha\in \ints^s_{\le \lfloor c\rfloor}$. Corollary~\ref{cor:rateHp} predicts the median has a $O(n^{-\lceil c\rceil-0.5+\epsilon})$  RMSE convergence rate under either randomization. As expected, medians with completely random designs perform the worst among methods with the optimal convergence rate because completely random designs usually generate nets with suboptimal t-parameters growing at $\log m$ rate \cite{pan2024skewnessrandomizedquasimontecarlo}. In contrast, medians with linear scrambling perform comparably to digital nets of order 2 and 3 while enjoying an easier and faster implementation because no digit interlacing is required as in the construction of higher order digital nets. The effect of dimensionality seems to be more salient for smoother integrands, in contrast to our analysis using mean dimension. When $c=2.5$, digital nets of order 3 only performs marginally better than those of order 2 and no methods seem to converge at the $n^{-3.5}$ theoretical rate for the range of $m$ we are testing.

\section{Discussion}\label{sec:disc}

What we have studied in Section~\ref{sec:rate} is far from a complete list of interesting function spaces people have used for the analysis of QMC algorithms. For instance, \cite{gnewuch2024qmcintegrationbasedarbitrary} studies the Haar wavelet spaces, Bessel potential spaces and Besov spaces of mixed dominated smoothness, all of which include the fractional Sobolev space $\mathcal{H}_{\lambda,s,p}$ as a subspace. One may ask whether there exists a universal proof that shows the median $\hat{\mu}^{(r)}_{\infty}$ attains almost the optimal convergence rate for well-conditioned notions of smoothness. It is not clear how this could be done, but it can save our efforts to analyze each smoothness condition individually as we have done in Section~\ref{sec:rate} if such proof exists.

Our analysis is restricted to base-2 digital nets, but the idea should be applicable to digital nets of other prime basis. In particular, \cite{dick:2011} proves Lemma~\ref{lem:Josefbound} in any prime basis and \cite{SUZUKI20161} has a version of Lemma~\ref{lem:exactfk} applicable to odd prime basis. A complete analysis is beyond the scope of this paper and we will leave it for future research.

A more serious limitation of our analysis is the curse of dimensionality. For example, the bound in equation~\eqref{eqn:fkboundHp} contains a constant that grows exponentially with the dimension $s$. Another example is when we bound $(N+\alpha s-1)^{\alpha s-1}$ by $4^{\epsilon N}$ for all $N\geq \lceil m-\alpha s \log_2(m) \rceil$ in the proof of Theorem~\ref{thm:rateVa}. The minimal $m$ for the inequality to hold grows super-linearly in $s$, so the required sample size $n=2^m$ grows super-exponentially in $s$. One way to solve this issue is to focus on a finite-sample analysis and study functions with low effective dimensions as introduced in \cite{cafmowen}. See Section 5 of \cite{superpolymulti} for an example of this approach when the integrand is analytic over $[0,1]^s$. Another solution is to introduce the weighted Sobolev space. A recent paper \cite{Goda2024} uses this idea and gives many dimension-independent bounds on the worst-case error of the median of QMC estimators. Both are interesting directions to explore and we will save it for future research.

Another big topic that deserves further study is how to estimate the error of our median estimate in a reliable way, especially given the bias of $\hat{\mu}^{(r)}_{\infty}$. To simplify the discussion, we again assume the precision $E=\infty$. There have been lots of efforts to construct reliable confidence intervals using randomized QMC \cite{owen2025errorestimationquasimontecarlo}. In our notation, the standard approach is to generate $r$ randomized $\hat{\mu}_\infty$ with the random linear scrambling and compute a $t$-interval based on a normal approximation heuristic. A recent work \cite{gobet2022mean} applies robust estimation techniques to $\hat{\mu}_\infty$ and establishes reliable finite-sample bounds for $\mu$. However, when applying the above methods to function spaces with smoothness parameter $\alpha+\lambda>1$, the resulting confidence intervals tend to be too wide because their lengths are proportional to the standard deviation of $\hat{\mu}_\infty$, which only converges at $n^{-3/2}$ rate due to the influence of outliers \cite{superpolyone}. This phenomenon is also observed in numerical experiments \cite{ci4rqmc}. A remedy is to reduce the variance by applying robust estimation techniques in \cite{gobet2022mean} to higher order scrambled digital net estimates instead, but the problem is again we need a priori knowledge on the function smoothness to choose the order properly.

Ideally, we want to construct confidence intervals that automatically adapt to the function smoothness without a priori knowledge. One possibility is to choose suitable sample quantiles as lower and upper bounds for $\mu$. Just like our median estimate $\hat{\mu}^{(r)}_{\infty}$, any $q$-sample quantile of $\hat{\mu}_\infty$ converges to $\mu$ at almost the optimal RMSE rate provided $q\in (0,1)$ and $r\geq m$. Nevertheless, the target coverage level is only guaranteed in an asymptotic sense, and even that requires both $\Pr(\hat{\mu}_\infty<\mu)$ and $\Pr(\hat{\mu}_\infty>\mu)$ to be bounded away from zero as $m\to\infty$, which is not easy to prove. In a personal communication, Pierre L’Ecuyer proposed a two-stage adaptive procedure. In the first stage, we generate $r$ randomized $\hat{\mu}_\infty$ using either completely random designs or the random linear scrambling. We then find which realization gives the sample median $\hat{\mu}^{(r)}_{\infty}$ and record the generating matrices $C_j$ used by that realization. In the second stage, we generate a new set of randomized $\hat{\mu}'_\infty$ by fixing the generating matrices $C_j$ to be those recorded in the first stage and only randomizing the digital shifts $\vec{D}_j$ by filling them with independent $\dunif\{0,1\}$ entries. One can then construct finite-sample bounds from $\hat{\mu}'_\infty$ by applying techniques in \cite{gobet2022mean}. Intuitively, this method should work well because we expect the variance of $\hat{\mu}'_\infty$ to be comparable to the $\var(\hat{\mu}_\infty\giv \ca^c)$ studied in Lemma~\ref{lem:medianMSE}, which we have shown to decrease at almost an optimal rate. How to coin this intuition into a rigorous theory is an open question leaving for future research.

\section*{Acknowledgments}

The author acknowledges the support of the Austrian Science Fund (FWF) 
Project P34808. For open access purposes, the author has applied a CC BY 
public copyright license to any author accepted manuscript version 
arising from this submission.

\bibliographystyle{plain}
\bibliography{qmc.bib}
\end{document}